\documentclass[11pt,a4paper,english]{amsart}

\usepackage[margin=2.5cm,
]{geometry}

\usepackage{csquotes}
\AddToHook{bfseries}{\boldmath}

\usepackage{iftex}

\ifPDFTeX
    \usepackage[T1]{fontenc}
    \usepackage{lmodern}
\fi

\usepackage{newunicodechar}
\newunicodechar{ε}{\ensuremath{\varepsilon}}

\usepackage{thmtools}
\usepackage{amsmath}
\usepackage{amssymb}
\usepackage{bbm}
\usepackage{pdfsync}
\usepackage{esint} %
\usepackage{verbatim} %

\newcommand{\R}{{\mathbb R}}       %

\newcommand{\bS}{{\mathbb S}}

\newcommand{\Z}{{\mathbb Z}}       %
\newcommand{\DD}{{\mathcal D}}

\newcommand{\HH}{{\mathcal H}}

\newcommand{\cW}{{\mathcal W}}

\newcommand{\cN}{{\mathcal N}}

\newcommand{\EE}{{\mathcal E}}

\newcommand{\diam}{{\rm diam}}
\newcommand{\dist}{{\rm dist}}

\newcommand{\supp}{\operatorname{supp}}

\newcommand{\vphi}{{\varphi}}
\newcommand{\ve}{{\varepsilon}}
\newcommand{\vv}{{\vspace{2mm}}}

\newcommand{\wt}[1]{{\widetilde{#1}}}
\newcommand{\wh}[1]{{\widehat{#1}}}

\newcommand{\pom}{{\partial\Omega}}

\newcommand{\tree}{{\rm Tree}}

\newcommand{\dd}{{\mathrm{d}}}

\newcommand{\BA}{{\mathsf{BA}}}

\def\XXint#1#2#3{{\setbox0=\hbox{$#1{#2#3}{\int}$ }
\vcenter{\hbox{$#2#3$ }}\kern-.58\wd0}}

\def\Lip{\mathop\mathrm{Lip}} 	                       %
\def\H11{\textup{H}^1_1} 					%

\usepackage{pgf,tikz} %

\definecolor{ffffff}{rgb}{1.0,1.0,1.0}
\definecolor{qqqqff}{rgb}{0.0,0.0,1.0}
\definecolor{ffqqqq}{rgb}{1.0,0.0,0.0}
\definecolor{zzzzqq}{rgb}{0.6,0.6,0.0}
\definecolor{marronet}{rgb}{0.6,0.2,0}
\definecolor{negre}{rgb}{0,0,0}
\definecolor{vermell}{rgb}{0.8,0.05,0.05}
\definecolor{blau}{rgb}{0.3,0.2,1.}
\definecolor{blauclar}{rgb}{0.,0.,1.}
\definecolor{grisfosc}{rgb}{0.25098039215686274,0.25098039215686274,0.25098039215686274}
\definecolor{verd}{rgb}{0.1,0.6,0.1}
\definecolor{taronja}{rgb}{0.9,0.6,0.05}
\definecolor{vermellclar}{rgb}{1.,0.,0.}
\definecolor{verdet}{rgb}{0,0.8,0.1}
\definecolor{blauverd}{rgb}{0,0.4,0.2}
\definecolor{grisclar}{rgb}{0.6274509803921569,0.6274509803921569,0.6274509803921569}

\usepackage[
	backend=biber,
	style=alphabetic, %
	maxalphanames=4,
	minalphanames=3,
	giveninits=true,
]{biblatex}
\DeclareSourcemap{
	\maps[datatype=bibtex]{
    \map{
        \step[fieldsource=mrnumber, fieldtarget=eprint, final]
        \step[fieldset=eprinttype, fieldvalue=mr]
    }
		\map{
			\step[fieldsource=doi,final]
			\step[fieldset=isbn,null]
			\step[fieldset=url,null]
			\step[fieldset=urldate,null]
			\step[fieldset=issn,null]
			\step[fieldset=eprint,null]
		}
	}
}%

\DeclareFieldFormat{eprint:mr}{%
    \ifhyperref
        {\href{https://mathscinet.ams.org/mathscinet-getitem?mr=#1}{MR#1}}
        {MR#1}%
}

\usepackage{hyperref}
\hypersetup{hidelinks}
\usepackage{orcidlink}
\usepackage[nameinlink]{cleveref}

\newtheorem{theorem}{Theorem}[section]
\newtheorem{lemma}[theorem]{Lemma}

\newtheorem{coro}[theorem]{Corollary}

\newtheorem*{claim*}{Claim}

\newtheorem*{theorem*}{Theorem}
\newtheorem*{theoremA}{Theorem A}
\newtheorem*{theoremB}{Theorem B}
\newtheorem*{theoremC}{Theorem C}
\newtheorem*{theoremD}{Theorem D}
\newtheorem*{theoremE}{Theorem E}

\crefname{coro}{corollary}{corollaries}
\Crefname{coro}{Corollary}{Corollaries}

\theoremstyle{definition}

\theoremstyle{remark}
\newtheorem{rem}[theorem]{Remark}

\crefname{rem}{remark}{remarks}
\Crefname{rem}{Remark}{Remarks}

\numberwithin{equation}{section}

\newcommand{\brem}{\begin{rem}}
\newcommand{\erem}{\end{rem}}

\title{Characterization of $C^{1,\alpha}$-domains in terms of Carleson-type $\varepsilon$-coefficients}

\author[Xavier Tolsa]{Xavier Tolsa\,\orcidlink{0000-0001-7976-5433}}
\address{
  ICREA, 08010 Barcelona,  Catalonia; Departament de Matemàtiques,
  Universitat Autònoma de Barcelona, 08193 Bellaterra, Catalonia; and Centre de Recerca Matemàtica, 08193 Bellaterra, Catalonia.}
\email{\href{mailto:xavier.tolsa@uab.cat}{xavier.tolsa@uab.cat}}

\author[Tobias Wang]{Tobias Wang\,\orcidlink{0009-0007-8208-308X}}
\address{
  School of Mathematics,
  Georgia Institute of Technology,
  Atlanta,
  GA 30332,
  USA
}
\email{\href{mailto:tobias.wang@gatech.edu}{tobias.wang@gatech.edu}}

\subjclass[2020]{28A75, 26B35.}
\date{October 14, 2026}
\keywords{Carleson coefficients, $C^{1,\alpha}$-domains, Hölder domains, Littlewood-Paley estimates, rectifiability.}

\hypersetup{
    pdftitle={Characterization of C(1,α)-domains in terms of Carleson-type ε-coefficients},
    pdfauthor={Xavier Tolsa, Tobias Wang},
    pdfkeywords={Carleson's coefficient, C(1,α)-domains, Hölder domains, Littlewood-Paley estimates, rectifiability.}
}

\begin{document}

\begin{abstract}
  Let $\Omega=\Omega^+\subset \mathbb{R}^{n+1}$ be a bounded open set with $n$-Ahlfors regular boundary and let $\Omega^-=\mathbb{R}^{n+1}\setminus\overline{\Omega^+}$. For $x\in\partial\Omega$, $r>0$, define
  \begin{equation*}
    \varepsilon_{n,\Omega}(x,r):=\frac{1}{r^n} \inf_{H^+}\mathcal{H}^n\left(((\partial B(x,r)\cap H^+)\setminus \Omega^+) \cup ((\partial B(x,r)\cap H^-)\setminus \Omega^-)\right),
  \end{equation*}
  where $\mathcal{H}^n$ stands for $n$-dimensional Hausdorff measure and
  the infimum is taken over all open affine half-spaces $H^+$
  such that $x\in \partial H^+$.
  Here $H^-=\mathbb{R}^{n+1}\setminus \overline{H^+}$.
  In this work we prove that $\Omega$ is a $C^{1,\alpha}$ open set, with $0<\alpha<1$, if and only if there exists some constant $C>0$ such that $\varepsilon_{n,\Omega}(x,r)\leq C\,r^\alpha$ for all $x\in\partial\Omega$ and all $0<r\leq \mathrm{diam}(\partial\Omega)$.
\end{abstract}

\maketitle

\section{Introduction}

The characterization of the smoothness of sets in Euclidean space
is a classical topic in analysis. In recent decades, the study of
problems in analysis, such as Vitushkin's conjecture
(see \cite{David-vitus}), has motivated the introduction of tools
from harmonic analysis into geometric measure theory.
For example, David and Semmes (see
\cite{DS1,DS2}) developed the theory of uniform rectifiability and characterized uniformly rectifiable sets in terms
of the behavior of $\beta$-type coefficients (see \cref{secprelim} for these notions). More recently, the study of the relationship between the so-called Carleson $\ve$-coefficients
and rectifiability has been a subject of intense investigation.

In the plane, consider an open set $\Omega^+$ and $\Omega^- = \R^2\setminus \overline{\Omega^+}$, $x\in\R^2$, and $r>0$.
Denote by $I^+(x,r)$ and $I^-(x,r)$ the longest open arcs of the circle
$\partial B(x,r)$ contained in $\Omega^+$ and $\Omega^-$, respectively (they may be empty). The Carleson $\ve$-coefficient
associated with $\Omega\equiv\Omega^+$ is defined as
\begin{equation}\label{eqepsiloncoef}
  \ve_\Omega(x,r) = \frac1r\,\max\big(\big|\pi r- \HH^1(I^+(x,r))\big|,\, \big|\pi r- \HH^1(I^-(x,r))\big|\big).
\end{equation}
In \cite{JTV} the following result, known as Carleson's $\ve^2$-conjecture, was shown:

\begin{theoremA}
  Let $\Omega=\Omega^+\subset \R^2$ be a Jordan domain, let $\Omega^-=\R^2\setminus \overline{\Omega^+}$, and let $\Gamma=
    \partial\Omega^+$.  Then, up to a set of zero $\HH^1$-measure,
  \begin{equation}\label{eqCar00}
    \mbox{$\displaystyle \EE_\Omega(x)^2:=\int_0^1 \ve_\Omega(x,r)^2\,\frac{dr}r<\infty$\; for $x\in\Gamma$ \quad$\Leftrightarrow$ \quad
      $\Omega$ has a tangent at $x\in\Gamma$.}
  \end{equation}
  In particular, the set
  $G=\{x\in\Gamma:\EE(x)<\infty\}$ is $1$-rectifiable.
\end{theoremA}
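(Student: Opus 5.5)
The plan has two directions. The easy one is \enquote{tangent $\Rightarrow$ finiteness}, which really is an \emph{almost everywhere} statement; the hard one is \enquote{finiteness $\Rightarrow$ tangent}.

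For the easy direction we work on the set of tangent points. By standard theory, the tangent points of a Jordan curve form a $1$-rectifiable set. At $\HH^1$-a.e.\ such point, $\Gamma$ is locally contained in a Lipschitz graph, and at density points of this containment the $\beta$-numbers of the graph are square summable: $\sum_k \beta(x,2^{-k})^2<\infty$ a.e., by Dorronsoro's theorem or Jones' traveling salesman estimates. Near a tangent point the circle $\partial B(x,r)$ meets $\Gamma$ only near the two antipodal points on the tangent line, except for a small portion where $\Gamma$ makes excursions. So $\ve_\Omega(x,r)$ is controlled by the angular deviation, roughly $\beta(x,Cr)$, plus a term measuring the part of $\Gamma\cap B(x,r)$ not on the graph. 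A Lebesgue density argument gives summability of the latter in $L^1$, hence a.e.\ finiteness of $\EE_\Omega$.

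For the hard direction we fix $x$ with $\EE_\Omega(x)<\infty$ and proceed as follows.
\begin{enumerate}
\item For small $r$, $\ve_\Omega(x,r)$ is small on most dyadic scales. Then $\partial B(x,r)$ contains two long arcs $I^\pm(x,r)$, of lengths close to $\pi r$, lying in $\Omega^\pm$. Their complementary gaps, near two points $z_1(r),z_2(r)$, contain all of $\Gamma\cap\partial B(x,r)$.
\item The key estimate is that the direction of the \enquote{separating diameter} $\theta(r)$ varies by at most $C(\ve_\Omega(x,r)+\ve_\Omega(x,2r))$ between comparable radii. This uses connectivity: an arc of $\partial B(x,r)$ inside $\Omega^+$ and an arc of $\partial B(x,2r)$ inside $\Omega^-$ cannot overlap radially in a way incompatible with a Jordan curve passing through $x$.
\item Summing these increments needs $\sum_k \ve(x,2^{-k})<\infty$, which square summability does \emph{not} give. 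So a pure pointwise argument fails, and we must work a.e.
\end{enumerate}

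For step (iii) we restrict to the set $G_M=\{\EE_\Omega\le M\}$ and, using a stopping-time/corona construction, show that $G_M$ is contained in a countable union of Lipschitz graphs. On cubes where the angle has not drifted by more than a small $\delta$, $\Gamma$ near $G_M$ is a Lipschitz graph with constant $\lesssim\delta$. A packing estimate for the stopping cubes, obtained by summing $\ve^2$ over $G_M$ and using $\ve$ to control the squared angular oscillation as in a Jones-type argument, shows that the stopping region is small. This yields rectifiability of $G$. Tangents at a.e.\ point then follow from the existence of approximate tangents for rectifiable sets, upgraded to genuine tangents through the two-arc separation of step (i) at density points.

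The main obstacle is this packing estimate: converting $L^2$ control of $\ve$ into control of the angle oscillation. Rotations of $\theta$ along chains of scales appear only linearly, so one needs the cancellation, or Pythagorean-type gain, that comes from comparing the true geometry with a Lipschitz graph. I would try to prove $\ve(x,r)\gtrsim|\theta(r)-\theta(2r)|$ plus error terms, and then obtain $L^2$ control of the oscillation via a Littlewood–Paley estimate on the graph.
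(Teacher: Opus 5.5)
The paper does not prove Theorem A. It quotes it, with the easy direction from \cite{Bishop-thesis} and the hard one from \cite{JTV}, so your outline has to stand on its own. It has a genuine gap at its core: the key estimate of step (ii), $|\theta(r)-\theta(2r)|\lesssim\ve_\Omega(x,r)+\ve_\Omega(x,2r)$, is false. So is the variant $\ve_\Omega(x,r)\gtrsim|\theta(r)-\theta(2r)|$ plus errors at $x$ that you hope to prove at the end.

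Identify $\R^2\equiv\mathbb C$. Let $\Gamma\cap B(x,1)$ be the double logarithmic spiral $\{x\pm\rho\,e^{i\log(1/\rho)}:0<\rho<1\}\cup\{x\}$, closed up outside $\bar B(x,1)$.
\begin{itemize}
\item Every circle $\partial B(x,\rho)$ with $\rho<1$ meets $\Gamma$ transversally in exactly two antipodal points.
\item Hence $\ve_\Omega(x,\rho)=0$ for all $\rho<1$, and $\EE_\Omega(x)=0$.
\item Yet the separating diameter turns by $\log 2$ on every dyadic scale, and $\Omega$ has no tangent at $x$.
\end{itemize}
The curve rotates freely inside $A(x,r,2r)$, so no connectivity argument can help. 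The obstruction is therefore not $\ell^2$ versus $\ell^1$ summability. The coefficients at a single center carry no information about rotation between scales, which is exactly why Theorem A holds only up to $\HH^1$-null sets. Rotation can only be controlled by $\ve_\Omega(y,s)$ over many centers $y$ at once. Compare \cref{secLipschitgraph}, where the flatness $\mathfrak b_k$ is bounded through $\|P_ju\|_\infty$, a supremum over all centers (\cref{thm:bk-upper-bound-Aj}). Your closing Littlewood--Paley idea is the right kind of tool. However, the packing estimate that would feed it is the real content of \cite{JTV}, and your outline does not supply it.

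Several steps around that estimate are also unjustified.
\begin{itemize}
\item Integrating $\ve^2$ over $G_M$ presupposes that $\HH^1|_{G_M}$ is locally finite. For an arbitrary Jordan curve even the $\sigma$-finiteness of $G$ is part of what must be proved.
\item Smallness of $\ve_\Omega$ at points of $G_M$ does not make $\Gamma$ flat nearby (the spiral again), whereas \cref{lemreifemberg} needs smallness at all points of a ball. In the paper, Ahlfors regularity settles both issues.
\item The upgrade from approximate tangents of $G$ to tangents of $\Omega$ is not a density-point argument. Take a line through $x$ and attach thin fingers of $\Omega^-$, of width $4^{-k}$, running along arcs of $\partial B(x,2^{-k})$ of fixed angular length. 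Then $\EE_\Omega(x)<\infty$, and $x$ is a density point of $G$ with an approximate tangent. Yet the cone condition fails at $x$, so you need a separate argument that such points form a null set.
\end{itemize}

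The easy direction also needs repair. It is false that $\Gamma$ is locally contained in a Lipschitz graph near a.e.\ tangent point. Only a large piece of the set of tangent points is, and the excursions of $\Gamma$ off that graph may have infinite length. A Lebesgue density argument only makes their contribution $o(1)$ at density points, which is far from square summability.

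The standard route avoids excursions altogether.
\begin{itemize}
\item Take a compact set $E$ of tangent points satisfying a uniform cone condition of small aperture below some scale $r_0$.
\item Let $\Omega_1\subset\Omega^+$ and $\Omega_2\subset\Omega^-$ be the sawtooth Lipschitz domains formed by the cones with vertices on $E$.
\item Let $I_i(x,r)$ be the longest arc of $\partial B(x,r)$ in $\Omega_i$. Then $\HH^1(I_1(x,r))\le\HH^1(I^+(x,r))\le 2\pi r-\HH^1(I_2(x,r))$, and symmetrically for $I^-$.
\item Hence $\ve_\Omega(x,r)\le\max_i r^{-1}|\pi r-\HH^1(I_i(x,r))|$ for $x\in E$ and $r<r_0$.
\item For graphs of small slope these deficits are bounded by $|\delta_h^2f_i(z)|/h$ with $h\approx r$. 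This follows by reversing the computation in \cref{thm:epsilon-to-second-difference}.
\item The resulting square function is integrable by Plancherel, so the excursions never enter the estimate.
\end{itemize}
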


The fact  that $\EE_\Omega(x)<\infty$ for $\HH^1$-a.e.\ tangent point on a Jordan curve had been proved previously by Bishop in
\cite{Bishop-thesis} (see also \cite{BCGJ}). The most difficult implication of Theorem A, i.e., the fact that the set $G$ is $1$-rectifiable and tangents to $\Gamma$ exist for $\HH^1$-a.e.\ $x\in G$,
was proved more recently by Jaye, the first author of this paper, and Villa \cite{JTV}.

The preceding theorem has been extended to higher dimensions by Fleschler, Tolsa, and Villa \cite{FTV2} in terms of some variants of Carleson's coefficients.
Given two arbitrary disjoint open (or, more generally, Borel) sets $\Omega^+$, $\Omega^-$ $\subset \R^{n+1}$, and $x\in \mathbb{R}^{n+1}$, $r>0$, define
\begin{equation*}
  \varepsilon_{n,\Omega^\pm}(x,r):=\frac{1}{r^n} \inf_{H^+}\mathcal{H}^n\left(((\partial B(x,r)\cap H^+)\setminus \Omega^+) \cup ((\partial B(x,r)\cap H^-)\setminus \Omega^-)\right),
\end{equation*}
where the infimum is taken over all open affine half-spaces $H^+$
such that $x\in \partial H^+$.
Here $H^-=\mathbb{R}^{n+1}\setminus \overline{H^+}$.
For a fixed open set $\Omega\subset \R^{n+1}$, we consider the coefficient $\ve_{n,\Omega}(x,r)$ associated with the sets $\Omega^+=\Omega$ and $\Omega^-=\R^{n+1}\setminus \overline{\Omega}$. For a Jordan domain in the plane, it is easy to check that $\ve_{1,\Omega}(x,r)\lesssim
  \ve_\Omega(x,r)$ (see \cite{FTV1}).

Among other results, the following is proven in \cite{FTV2}:

\begin{theoremB}
  For $n\geq 1$, let $\Omega^+, \Omega^- \subset \R^{n+1}$ be two disjoint Borel subsets, and let
  $$E\subset\bigg\{x \in \R^{n+1}:  \int_0^1 \ve_{n,\Omega^\pm}(x,r)^2 \, \frac{dr}{r} < \infty\bigg\}$$
  be a Borel set. Then $E$ is $n$-rectifiable.
\end{theoremB}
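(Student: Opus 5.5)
The plan is a stopping-time decomposition followed by a cone (Lipschitz graph) criterion. Since $\HH^n$-countable unions of rectifiable sets are rectifiable, I would first decompose $E$. For $\delta>0$ small and $k\in\mathbb N$, let
$$E_{k}=\Big\{x\in E:\ \int_0^{2^{-k}}\ve_{n,\Omega^\pm}(x,r)^2\,\frac{dr}{r}\le \delta^2\Big\}.$$
Then $E=\bigcup_k E_k$. Each $E_k$ is further split into countably many pieces of diameter $<2^{-k-2}$, and then according to a rough direction of the approximating normal (see below). The goal is to show that each resulting piece $F$ lies on an $n$-dimensional Lipschitz graph. For this I would use the standard cone criterion: it suffices to find a hyperplane $L$ and a small $\theta$ such that $y-x\notin \{z:\dist(z,L)>\theta|z|\}$ for all $x,y\in F$.

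\textbf{Step 1: solid-ball approximation at a single point.} Fix $x\in E_k$ and $s<2^{-k-1}$. For $r\in[s,2s]$ let $H^+_{x,r}$ be an almost-minimizing half-space for $\ve(x,r)$, with unit normal $\nu_{x,r}$. Comparing the sphere approximations at radii $r$ and $r'$ (compare $\partial B(x,r)$ with the radial projection of $\partial B(x,r')$) shows that $|\nu_{x,r}-\nu_{x,r'}|\lesssim \ve(x,r)+\ve(x,r')$ on a large set of pairs. Hence there is a single half-space $H^+_{x,s}$ through $x$ such that, by the coarea formula in polar coordinates,
$$\frac{1}{s^{n+1}}\,\mathcal L^{n+1}\Big(B(x,2s)\cap\big((H^+_{x,s}\setminus\Omega^+)\cup(H^-_{x,s}\setminus\Omega^-)\big)\Big)\lesssim\Big(\int_s^{2s}\ve(x,r)^2\frac{dr}{r}\Big)^{1/2}\le\delta.$$
Summing the dyadic increments $|\nu_{x,2^{-j}}-\nu_{x,2^{-j-1}}|$ only gives an $\ell^2$ bound, not an $\ell^1$ bound. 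So I will not claim that the normals converge. I will only use that they are $\delta$-close at comparable scales, and that over any range they drift at most like $\delta\sqrt{\#\text{scales}}$.

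\textbf{Step 2: two points see compatible half-spaces (main obstacle).} Let $x,y\in E_k$ with $|x-y|=s$ small. Apply Step 1 at $x$ at scale $2s$ and at $y$ at scale $2s$. Both $B(x,4s)$ and $B(y,4s)$ contain a ball $B$ of radius $\sim s$ around $x$. Inside $B$, the half-spaces $H^+_{x,2s}$ and $H^+_{y,2s}$ must each essentially coincide with $\Omega^+$ up to measure $\lesssim\delta s^{n+1}$; here I use that $\Omega^+$ and $\Omega^-$ are disjoint, which forces the two half-spaces to have small symmetric difference. Two half-spaces whose symmetric difference in a ball of radius $\sim s$ has measure $\le C\delta s^{n+1}$ have normals within $C\delta^{c}$ of each other and boundary hyperplanes within $C\delta^{c}s$ of each other. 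Since $y\in\partial H^+_{y,2s}$, this gives $\dist(y,\partial H^+_{x,2s})\lesssim\delta^{c}|x-y|$. Note that we never need $y\in\partial\Omega$; only the spheres centered at $y$ matter. This step is the heart of the proof and the place where I expect the difficulty to lie. The delicate point is making the passage from sphere-wise to solid smallness uniform without normal convergence, which requires a careful selection of near-minimizing half-spaces.

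\textbf{Step 3: a common plane for the piece.} It remains to replace the scale-dependent planes $\partial H^+_{x,2|x-y|}$ by one fixed plane $L$ for the whole piece, since the normals may drift slowly as the scale goes to $0$. The $\ell^2$ drift is the problem here. To handle it, I would make a further countable decomposition: at each point only finitely many dyadic scales have $\ve(x,2^{-j})>\delta'$ for a tiny $\delta'$, and one partitions according to the last such bad scale and the normal direction there. Alternatively, one can avoid a fixed $L$ by proving that $F$ has approximate tangent planes $\HH^n$-a.e. Here Step 2 shows that the Step 1 planes give a "weak flatness" (small Reifenberg-type $\beta_\infty$ estimates) uniformly at every scale. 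Combined with the upper density bound $\HH^n(F\cap B(x,r))\lesssim r^n$, which also follows from Step 2, this allows invoking the Reifenberg-type/Jones traveling-salesman square-function criterion $\sum_j\beta_\infty(x,2^{-j})^2<\infty\Rightarrow$ rectifiability. Since $\beta_\infty(x,2^{-j})\lesssim\big(\int_{2^{-j}}^{2^{-j+1}}\ve^2\,\frac{dr}{r}\big)^{c}$ by Step 2, the $\ve^2$ hypothesis on $E$ delivers exactly such a summability. This yields rectifiability of each piece and hence of $E$.
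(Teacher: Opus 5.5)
The paper does not prove Theorem B; it quotes it from \cite{FTV2}. Taken on its own terms, your proposal has a genuine gap in Step 1, which you treat as routine. The coefficients $\ve_{n,\Omega^\pm}(x,r)$ at a fixed center $x$ see each sphere $S(x,r)$ separately. They say nothing about how spheres of different radii fit together, and nothing relates $\Omega^+\cap S(x,r)$ to the radial projection of $\Omega^+\cap S(x,r')$. Here is a concrete counterexample. Fix $x$ and a Borel map $\nu\colon(0,\infty)\to\bS^n$, and put $\Omega^\pm=\{z\neq x:\ \pm(z-x)\cdot\nu(|z-x|)>0\}$. Taking $H^+=\{z:(z-x)\cdot\nu(r)>0\}$ gives $\ve_{n,\Omega^\pm}(x,r)=0$ for every $r>0$, so $E=\{x\}$ is admissible and $x\in E_k$ for all $k$. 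Now take $\nu=\nu_1$ on $(0,3s/2)$ and $\nu=\nu_2\perp\nu_1$ on $[3s/2,\infty)$. Then every half-space $H^+$ with $x\in\partial H^+$ satisfies $\mathcal L^{n+1}(B(x,2s)\cap(H^+\triangle\Omega^+))\gtrsim s^{n+1}$. So both the claim $|\nu_{x,r}-\nu_{x,r'}|\lesssim\ve(x,r)+\ve(x,r')$ and your solid estimate in $B(x,2s)$ fail. A single center only gives radially averaged information, such as $\mathcal L^{n+1}(\Omega^\pm\cap B(x,t))\approx\frac12\mathcal L^{n+1}(B(x,t))$. Step 2 is built on Step 1 and falls with it. The missing idea is that half-space structure in solid regions has to come from combining spheres centered at many points of $E$. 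That is only effective where $E$ is large, so you must work with a measure on $E$ and its density points. This is also why Theorem B is an almost-everywhere statement (the example above is $\HH^n$-null). It is likewise why the flatness result quoted in \cref{lemreifemberg} from \cite[Lemma 6.1]{FTV2} assumes smallness of $\ve_n(y,\cdot)$ for all $y$ in a ball, together with Ahlfors regularity.

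Step 3 would not close the argument even if a two-point compatibility lemma were available. Partitioning by the last dyadic scale where $\ve>\delta'$ does not give a fixed plane. After that scale the normals can still drift by about $\delta'\sqrt m$ over $m$ scales, and this really happens. For the planar domain bounded near $0$ by the rectifiable curve $\{t\,e^{i\log\log(1/|t|)}:0<|t|<t_0\}\cup\{0\}$, one has $\ve(0,r)=0$ for small $r$, while the best lines at $0$ rotate indefinitely. So pointwise data cannot produce a single cone. The alternative route has two further problems. First, the upper density bound does not follow from Step 2. Its conclusion is a uniform, not square-summable, flatness condition, and flat snowflake-type sets of dimension larger than $n$ also satisfy it. Second, the $\beta_\infty(x,2^{-j})$ that Step 2 controls depends on the $\ve$-integrals at all $y\in F\cap B(x,2^{-j})$, and on the drift between the scales $|x-y|$ and $2^{-j}$. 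It is not controlled by $\int_{2^{-j}}^{2^{-j+1}}\ve(x,r)^2\,\frac{dr}{r}$ alone, so the pointwise hypothesis at $x$ gives no bound on $\sum_j\beta_\infty(x,2^{-j})^2$. The standard way to turn $\ell^2$ control into rectifiability has three parts: average over centers with respect to a measure on $E$, build an approximating Lipschitz graph by a stopping-time construction, and then transfer the $\ve$-estimates to that graph to show its slope stays small. This is the template that \cref{sec-Lipgraph,secfinal} follow in the H\"older setting, and your proposal has no counterpart to it.
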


We remark that, as shown in \cite{FTV2}, when $\Omega^+, \Omega^-$ are open, the set of points $x \in \R^{n+1}$ such that $\int_0^1 \ve_{n,\Omega^\pm}(x,r)^2 \, \frac{dr}{r} < \infty$ is Borel. Under the additional assumption that $\Omega^+ \cup \Omega^-$ satisfies the so-called capacity density condition, the $\HH^n$-a.e.\ existence of tangents is also characterized in \cite{FTV2} in terms of the finiteness of a square function involving somewhat stronger coefficients, denoted by $\alpha_{\Omega^\pm}(x,r)$ (see \cref{secprelim}).

For Ahlfors regular sets we have more quantitative results.
A Borel measure $\mu$ on $\mathbb{R}^{n+1}$ is said to be $n$-Ahlfors regular if there exists some constant $C>0$ such that
\begin{equation}\label{e:def-ADR}
  C^{-1}r^n\leq \mu(B(x,r))\leq Cr^n\qquad \text{for all}\, x\in \supp\mu, \quad 0<r\leq \diam(\supp\mu).
\end{equation}
A closed set $E\subset \mathbb{R}^{n+1}$ is called $n$-Ahlfors regular if the Hausdorff measure $\HH^n|_E$ is $n$-Ahlfors regular.
In \cite{CTV}, the following has been proven:

\begin{theoremC}
  Let $\Omega^+$ and $\Omega^-$ be two disjoint open subsets of $\R^{n+1}$. Suppose that $\mu$ is an $n$-Ahlfors regular measure with $\supp(\mu) = \partial \Omega^+ \cup \partial \Omega^-$. Then the following are equivalent.
  \begin{enumerate}
    \item $\Omega^+$ and $\Omega^-$ are complementary two-sided corkscrew open sets, and in particular $\mu$ is uniformly $n$-rectifiable.
    \item There is a constant $C_1$ so that for each ball $B$ centered on $\supp(\mu)$ it holds that
          \begin{equation*}%
            \int_B \int_0^{r(B)} \ve_{n,\Omega^\pm} (x,r)^2 \, \tfrac{dr}{r} \, d\mu(x) \leq C_1 r(B)^n.
          \end{equation*}
  \end{enumerate}
\end{theoremC}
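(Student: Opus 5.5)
The plan is to prove the two implications separately, using the David--Semmes theory of uniform rectifiability in both directions.

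For (2)$\Rightarrow$(1), I would first use Chebyshev. The Carleson estimate implies that for every ball $B$ centered on $\supp\mu$ and every $\delta>0$, most scales and points in $B$ satisfy $\ve_{n,\Omega^\pm}(x,r)<\delta$, with a Carleson packing bound. By a pigeonhole argument, every ball $B(x_0,R)$ then contains a ball $B(x,r)$, with $x\in\supp\mu\cap B(x_0,R/2)$ and $r\approx_\delta R$, such that $\ve_{n,\Omega^\pm}(x,r)<\delta$.

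For such a ball, some half-space $H^+$ has $\HH^n\big((\partial B(x,r)\cap H^+)\setminus\Omega^+\big)\le\delta r^n$, and similarly for $H^-$ and $\Omega^-$. Ahlfors regularity gives a bound of the form $\HH^n\big(\partial B(x,r')\cap U_{\eta r}(\supp\mu)\big)\lesssim \eta\, r^n$ for a good radius $r'\in[r,2r]$; the ingredients are the $(n+1)$-dimensional volume of an $\eta r$-neighbourhood and coarea (one may replace $r$ by a nearby good radius, since $\ve$ changes little). Since the sphere cap near the pole of $H^+$ has measure $\approx r^n$, it then contains a point of $\Omega^+$ at distance $\gtrsim r$ from $\supp\mu$, and the same holds for $\Omega^-$. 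Hence every ball has corkscrew points for both $\Omega^\pm$, with constants depending on $\delta$ and $C_1$. Complementarity, meaning $\supp\mu=\partial\Omega^+=\partial\Omega^-$ and $\Omega^+\cup\Omega^-$ of full measure, follows similarly: a point of $\partial\Omega^+\setminus\partial\Omega^-$, or an open piece of the complement, would force $\ve$ to be bounded below on a positive-measure set of scales, contradicting the Carleson bound. Finally, the classical fact that an Ahlfors regular boundary of a two-sided corkscrew open set is uniformly rectifiable (David--Jerison, Semmes) gives the last assertion.

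For (1)$\Rightarrow$(2), uniform rectifiability gives the Carleson condition for the David--Semmes coefficients $\beta_1(x,r)^2$. The two-sided corkscrew condition, together with the bilateral weak geometric lemma, controls which side of an approximating plane $L_{x,r}$ lies in $\Omega^+$. The key pointwise step is the following. Suppose $\supp\mu\cap B(x,4r)$ is within $\eta r$ of $L_{x,r}$ and vice versa. Then the two components of $B(x,3r)\setminus U_{\eta r}(L_{x,r})$ are connected sets avoiding $\partial\Omega^\pm$. Each component therefore lies in one of $\Omega^\pm$ (complementarity rules out a third region), and the corkscrews force opposite choices. Consequently
\begin{equation*}
  \ve_{n,\Omega^\pm}(x,r')\lesssim r^{-n}\HH^n\big(\partial B(x,r')\cap U_{\eta r}(L_{x,r})\big)\quad\text{for } r'\in[r,2r].
\end{equation*}
This is only $\lesssim\eta$, which is too crude: it gives a square function bounded by a Carleson packing count, not by $\beta^2$. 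I would refine it by letting the slab width vary. I would bound the bad part of the sphere by the set of $y\in\partial B(x,r')$ whose segment to $L_{x,r}$ meets $\supp\mu$, average over $r'\in[r,2r]$ using coarea, and estimate the resulting volume by $\int_{B(x,4r)}\dist(y,L_{x,r})\,d\mu(y)/r^{n+1}\approx\beta_1(x,4r)$. Squaring then needs $\ve\lesssim\beta$ in an $L^2$-averaged sense, and I would get this through a Jones-type $\beta_2$ or $\beta_1$ stopping-time summation.

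Non-flat scales, where the weak geometric lemma fails, are Carleson-packed and contribute $\lesssim r(B)^n$ because $\ve\le C$. The main obstacle is exactly the passage from the crude $\ve\lesssim\eta$ to a genuinely quadratic bound $\ve^2\lesssim\beta^2$ in Carleson mean. I expect to handle it either by corona decomposition into Lipschitz graphs, where $\ve$ can be compared with $L^2$ oscillation of the gradient via Dorronsoro's theorem, or by the volume computation above. I would try the corona/Lipschitz-graph route first.
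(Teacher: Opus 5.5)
The paper does not prove Theorem C; it quotes it from \cite{CTV}. Your proposal therefore has to stand on its own, and in the direction (1)$\Rightarrow$(2) it does not yet do so. You correctly identify the passage from $\ve\lesssim\eta$ to a quadratic bound as the crux, but you never supply it. Your averaged volume estimate gives at best $\frac1r\int_r^{2r}\ve(x,r')\,dr'\lesssim\beta_1(x,4r)$. Jensen goes the wrong way here: a bound on the $r'$-average of $\ve$ gives no upper bound on the average of $\ve^2$ in terms of $\beta^2$. A Jones-type summation of $\beta$'s does not help, because the obstruction already appears at a single scale, and the corona route is only named.

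The bad set is also described incorrectly. If $\supp\mu=L_{x,r}$, then every $y$ has its segment to $L_{x,r}$ meeting $\supp\mu$. A point $y\in H^+\setminus\Omega^+$ in the slab is detected instead by $\supp\mu$ meeting the vertical segment from $y$ to the \emph{far} side of the slab.

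The missing step is an estimate on each individual sphere.
\begin{enumerate}
\item Set up coordinates. Measure heights $h$ from the plane through $x$ parallel to $L$, and let $\Pi$ be the projection onto $L$. Write points of the equatorial band of $S(x,r')$ as $(\sqrt{r'^2-h^2}\,\omega,h)$ with $\omega\in\bS^{n-1}$; the area element is $\approx r^{n-1}\,d\omega\,dh$.
\item Locate the bad heights. If such a point with $h>0$ is not in $\Omega^+$, the far-side segment contains some $z\in\supp\mu$ with $h(z)\ge h$ and $|\Pi z-r'\omega|\le h^2/r'\le h(z)$. Hence the bad heights over $\omega$ lie in $[0,G(r'\omega)]$, where $G(p)=\sup\{h(z):z\in\supp\mu\cap B(x,4r),\ |\Pi z-p|\le h(z)\}$, and symmetrically below.
\item Square on each sphere. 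Cauchy--Schwarz in $\omega$ gives $\ve(x,r')^2\lesssim r^{-2}\int_{\bS^{n-1}}G(r'\omega)^2\,d\omega$. Integrating in $r'$ gives $\int_r^{2r}\ve(x,r')^2\,\frac{dr'}{r'}\lesssim r^{-n-2}\int_{L\cap B(x,3r)}G^2$.
\item Control $G$. A Vitali covering by the balls $B(z,h(z))$, together with the lower Ahlfors bound, gives $|\{G>s\}|\lesssim\mu(\{z\in B(x,5r):|h(z)|>s/2\})$. Therefore $\int G^2\lesssim\int_{B(x,5r)}h^2\,d\mu$.
\item Sum over cubes. Integrating in $x$ over a flat cube $Q$ absorbs the offset $\dist(x,L)$ and yields $\int_Q\int_{\ell(Q)}^{2\ell(Q)}\ve^2\,\frac{dr}r\,d\mu\lesssim\beta_2(CQ)^2\mu(Q)$. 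The Carleson property of $\beta_2^2$ for uniformly rectifiable sets then gives (2). The non-flat cubes, where $\ve\le C$, are handled by the bilateral weak geometric lemma.
\end{enumerate}

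Your direction (2)$\Rightarrow$(1) is essentially right, but two steps need repair.

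First, $\ve(x,\cdot)$ does not ``change little'' in $r$. Instead, choose $x$ and a dyadic block $[\rho,2\rho]$ on which $\int_\rho^{2\rho}\ve(x,t)^2\,dt/t$ is small. Then pick $t$ in that block where both $\ve(x,t)<\delta$ and the coarea bound hold; each condition fails only on a small set.

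Second, excluding a third open region $W$, disjoint from $\Omega^+\cup\Omega^-\cup\supp\mu$, is not immediate. Such a $W$ forces $\ve\gtrsim1$ only at scales comparable to its Whitney balls. What works is the following.
\begin{enumerate}
\item Since $\partial W\subset\supp\mu$, the set $W$ has locally finite perimeter.
\item Hence its reduced boundary has positive $\HH^n$-measure inside $\supp\mu$, and at each of those points $W$ has density $1/2$.
\item Because $\HH^n(S(y,t)\cap W)\le\ve(y,t)\,t^n$, this forces $\frac1\rho\int_\rho^{2\rho}\ve(y,t)\,dt\gtrsim1$ for all small $\rho$.
\item This contradicts $\int_0^1\ve(y,t)^2\,dt/t<\infty$ for $\mu$-a.e.\ $y$, which follows from (2).
\end{enumerate}
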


For the definition of uniform rectifiability and the corkscrew condition, see \cref{secprelim}.
\vv

It is natural to ask whether some of these results have
counterparts that characterize higher degrees of smoothness.
In this direction, the following was proven by Casey \cite{Casey}:

\begin{theoremD}
  Let $\Omega=\Omega^+\subset \R^2$ be a Jordan domain, let $\Omega^-=\R^2\setminus \overline{\Omega^+}$, and let $\Gamma=
    \partial\Omega^+$.
  Let $\alpha>0$ and suppose that there exists some constant $C>0$ such that
  \begin{equation}\label{eqhyp91}
    \ve_\Omega(x,r)\leq C\,r^{\alpha}\quad \mbox{ for all $x\in\Gamma$ and all $0<r\leq \diam(\Gamma)$.}
  \end{equation}
  Then $\Omega$ is a $C^{1,\beta}$ domain, for some $\beta>0$ depending only on $\alpha$.
\end{theoremD}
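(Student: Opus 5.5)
Theorem D is Casey's result, so in the paper it is cited rather than proved; what follows is a sketch of how one would establish it.

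The plan is to show that the tangent direction at each boundary point exists and varies Hölder continuously along $\Gamma$. The argument runs through a chord-arc/Lipschitz reduction at small scales followed by a geometric-series argument over dyadic scales.

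\emph{Step 1: local flatness at small scales.} For $x\in\Gamma$ and $r$ small, $\ve_\Omega(x,r)\le Cr^\alpha$ is small. So $\partial B(x,r)$ meets $\Omega^+$ in an arc of length nearly $\pi r$, and $\Omega^-$ in an arc of length nearly $\pi r$. The remaining set, which contains $\Gamma\cap\partial B(x,r)$, lies in two short arcs of length $\lesssim r^{1+\alpha}$. These arcs are centered at two nearly antipodal points $p^\pm(x,r)$. Let $L(x,r)$ be the line through $x$ that bisects the two long arcs. Then $\Gamma\cap\partial B(x,r)\subset$ the $Cr^{1+\alpha}$-neighbourhood of $L(x,r)$.

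Applying this at every radius $s\le r$ gives $\Gamma\cap B(x,r)\subset\bigcup_{s\le r}(\text{two arcs of angular width }\lesssim s^\alpha\text{ around }L(x,s))$. This yields a bilateral flatness estimate for $\Gamma$ near $x$ once we know that $L(x,s)$ varies slowly in $s$.

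\emph{Step 2: comparison of scales.} Compare $L(x,r)$ with $L(x,r/2)$. Because $\Omega$ is a Jordan domain, the arc $I^+(x,r)$ and the arc $I^+(x,r/2)$ must be connected within $\Omega^+$ (by radial segments, say, through the annulus). The same holds for $I^-$.

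To make this rigorous I would argue by contradiction. Suppose the angle between the two lines exceeded $Cr^\alpha$. Then in the annulus $B(x,r)\setminus B(x,r/2)$ there would be a radial segment starting in $I^+(x,r)$ and ending in $I^-(x,r/2)$. This segment must cross $\Gamma$. One then uses intermediate radii: for each $s\in[r/2,r]$ the crossing point lies in the small exceptional arcs of $\partial B(x,s)$. Hence the segment's angle stays within $s^\alpha$ of the angle of $L(x,s)$.

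Continuity of the exceptional arcs in $s$ is the delicate point. The arcs $I^\pm(x,s)$ need not depend continuously on $s$. I would handle this by covering $[r/2,r]$ with finitely many radii and exploiting the connectedness of $\Gamma\cap \overline{B(x,r)}\setminus B(x,r/2)$ together with the Jordan curve theorem. This should give $\angle(L(x,r),L(x,r/2))\lesssim r^\alpha$, and summing the geometric series produces a tangent line $T_x$ with $\angle(T_x,L(x,r))\lesssim r^{\alpha}$.

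\emph{Step 3: Hölder regularity of the tangent.} For $x,y\in\Gamma$ with $|x-y|=d$, compare $L(x,2d)$ and $L(y,2d)$. Both lines approximate $\Gamma\cap B(x,d)$ within $Cd^{1+\alpha}$, and $\Gamma$ crosses $B(x,d)$ from side to side (by Step 1). Hence the two lines are within angle $\lesssim d^\alpha$. It follows that $\angle(T_x,T_y)\lesssim d^\alpha$.

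Steps 1 and 2 also give, at scale $r_0\approx$ small constant times $\diam\Gamma$, that $\Gamma\cap B(x,r_0)$ is a graph over $T_x$. Indeed, each circle $\partial B(x,s)$ meets $\Gamma$ only near two points, one on each side, and this prevents backtracking. So $\Gamma$ is locally a $C^{1,\alpha}$ graph, giving $\beta=\alpha$ (for $\alpha<1$; if $\alpha\ge1$, any $\beta<1$).

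The main obstacle is Step 2: controlling the rotation of the best line between consecutive scales using only a Jordan-curve topological argument, since the $\ve$-coefficient gives information on each circle separately and not on how the circles are linked.
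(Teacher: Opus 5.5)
\textbf{There is a genuine gap in Step 2.} Step 3 and the graph conclusion both depend on it.

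Everything you use in Step 2 comes from circles centred at the single point $x$. That information cannot control how $L(x,s)$ turns with $s$. Take a Jordan curve $\Gamma$ which inside $B(x,r_0)$ is the double logarithmic spiral $\{x\pm s\,e^{i\log(1/s)}:0<s<r_0\}\cup\{x\}$, closed up outside $B(x,r_0)$.
\begin{itemize}
\item For $s<r_0$, the circle $\partial B(x,s)$ meets $\Gamma$ transversally and only at the antipodal points $x\pm s\,e^{i\log(1/s)}$.
\item So one open semicircle lies in $\Omega^+$, the other in $\Omega^-$, and $\ve_\Omega(x,s)=0$ for all $s<r_0$.
\item Yet $L(x,s)$ turns by $\log 2$ between $s/2$ and $s$, and $\Gamma$ has no tangent at $x$.
\end{itemize}
Every ingredient of your Step 2 is consistent with this picture. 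A radial segment from $I^+(x,r)$ to $I^-(x,r/2)$ does exist and does cross $\Gamma$, at a radius where its direction lies in the exceptional arcs, and no contradiction follows. Covering $[r/2,r]$ by finitely many radii or invoking the Jordan curve theorem cannot rescue it, because this is an honest Jordan curve. Also, $\Gamma\cap(\overline{B(x,r)}\setminus B(x,r/2))$ is not connected in general; for a line through $x$ it has two components.

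The same example refutes the Step 3 claim that two intersection points per circle centred at $x$ prevent backtracking. Your Step 3 estimate that both lines approximate $\Gamma\cap B(x,d)$ within $Cd^{1+\alpha}$ also presupposes the conclusion of Step 2. The spiral does violate the hypothesis at the other points $y\in\Gamma$ near $x$, at scales comparable to $|y-x|$. That cross-centre information is exactly what a correct proof has to exploit.

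\textbf{The missing idea is a mechanism that links scales through different centres.} Once $\Gamma$ is known to be locally an $L$-Lipschitz graph with $L<1$, this is easy and even gives the sharp exponent, as in \cref{secplane}:
\begin{itemize}
\item \cref{thm:epsilon-to-second-difference} turns $\ve$ at the centre $z$ into $|\delta_h^2f(z)|\lesssim h^{1+\alpha}$.
\item \cref{lem:second-difference} then passes from scale $t$ to scale $t/2$ using the second difference centred at the midpoint $x+t/2$, not at $x$.
\end{itemize}

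The hard part is the reduction to such a graph. There, smallness of flatness coefficients at each scale is not enough, because small rotations need not sum. In the paper this reduction has three ingredients:
\begin{itemize}
\item it uses the hypothesis at all centres in a ball (\cref{lemreifemberg});
\item it bounds the total rotation by a stopping-time Lipschitz approximation and a Littlewood--Paley absorption argument;
\item it assumes $\partial\Omega$ is Ahlfors regular, which is not known a priori for an arbitrary Jordan domain.
\end{itemize}
This is why the paper cites Casey's theorem rather than proving it. The reduction to a Lipschitz graph is precisely the part your sketch leaves open.
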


In the converse direction, if $\Omega\subset\R^2$ is a $C^{1,\alpha}$ domain, then it is easy to check that
$\ve_\Omega(x,r)\leq C\,r^{\alpha}$ for all $x\in\Gamma$ and all $0<r\leq \diam(\Gamma)$.
A natural question is whether one can take $\beta=\alpha$
in Casey's theorem.
In the present paper we show that this is indeed the case. In fact, we prove the following result in arbitrary dimensions for open sets with Ahlfors regular boundaries:

\begin{theorem}\label{teomain}
  Let $\Omega\subset\R^{n+1}$ be a bounded open set with $n$-Ahlfors regular boundary and let $0<\alpha<1$. Then $\Omega$ is a $C^{1,\alpha}$ open set
  if and only if there exists some constant $C$ such that
  \begin{equation}\label{eqhyp92}
    \ve_{n,\Omega}(x,r)\leq C\,r^{\alpha}\quad \mbox{ for all $x\in\pom$ and all $0<r\leq \diam(\pom)$.}
  \end{equation}
\end{theorem}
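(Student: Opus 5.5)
The easy direction is a direct computation. If $\Omega$ is a $C^{1,\alpha}$ open set, then near each $x\in\pom$ the boundary is, after a rotation, the graph of a $C^{1,\alpha}$ function $f$ with $f(0)=0$ and $\nabla f(0)=0$. Hence $|f(y)|\lesssim |y|^{1+\alpha}$. We take $H^+$ to be the half-space bounded by the tangent plane at $x$ and on the side of $\Omega$. Then the set of points of $\partial B(x,r)$ that are "misclassified" is contained in a slab of width $\lesssim r^{1+\alpha}$ around $\partial H^+$. Its $\HH^n$-measure on the sphere is $\lesssim r^{n-1}\cdot r^{1+\alpha}=r^{n+\alpha}$, which gives \eqref{eqhyp92} for small $r$. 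Large $r$ (up to $\diam(\pom)$) is handled trivially by compactness, since $\ve_{n,\Omega}\le C$.

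For the converse we proceed in three steps: qualitative regularity, uniqueness of the normal, then Hölder continuity of the normal.

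\emph{Qualitative regularity.} The hypothesis gives $\int_0^{r(B)}\ve_{n,\Omega}(x,r)^2\,\frac{dr}{r}\lesssim r(B)^{2\alpha}$ uniformly. So Theorem C applies: $\Omega^\pm$ are complementary two-sided corkscrew open sets and $\pom$ is uniformly rectifiable.

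\emph{Uniqueness of the normal.} For each $x$ and $r$, let $H^+_{x,r}$ be an almost-minimizing half-space, with unit normal $N(x,r)$. The key geometric lemma we need is a stability estimate. If $\ve_{n,\Omega}(x,r)$ and $\ve_{n,\Omega}(x,r/2)$ are both $\le \delta$, then $|N(x,r)-N(x,r/2)|\lesssim \delta$ (or $\delta^{c}$). Likewise, if $y\in\pom\cap B(x,r/2)$, then $|N(x,r)-N(y,r)|\lesssim \delta$.

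To prove this lemma, we use the corkscrew/Ahlfors regularity to show that small $\ve$ forces $\pom\cap B(x,r)$ to lie in a thin neighbourhood of $\partial H^+_{x,r}$. This is a $\beta$-number bound of the form $\beta_\infty(x,r/2)\lesssim \ve_{n,\Omega}(x,r)^{c}$. The argument is as follows: a point of $\pom$ far from the plane is surrounded by a corkscrew ball of both $\Omega^+$ and $\Omega^-$. Integrating over radii, this forces many misclassified points on the spheres $\partial B(x,s)$, $s\in(r/2,r)$. For this we must average the hypothesis over nearby radii and centres, which is fine since the hypothesis holds at every scale.

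Geometrically summing $|N(x,2^{-k})-N(x,2^{-k-1})|\lesssim 2^{-k\alpha}$ then gives a limit normal $N(x)$ with $|N(x,r)-N(x)|\lesssim r^\alpha$. Combined with the comparison between nearby centres, this yields $|N(x)-N(y)|\lesssim |x-y|^\alpha$.

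\emph{Hölder continuity of the normal.} Finally we show $\pom$ is locally a Lipschitz graph over $\partial H^+_{x,r_0}$ with small constant. This follows from the flatness $\beta_\infty\lesssim r^{c}$ at all scales together with the two-sidedness, which excludes folds. Along with the tangent-plane continuity above, this makes the graph function $C^{1}$ with $\alpha$-Hölder gradient.

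The main obstacle is obtaining the sharp exponent. The $\beta$-number estimate via the corkscrew argument loses a power, giving $\beta\lesssim r^{c\alpha}$, as in Casey's theorem. To avoid the loss, we do not pass through $\beta_\infty$ for the normal comparison. Instead we compare $N(x,r)$ with $N(x,r/2)$ directly through the measure of the symmetric difference of half-spheres. Two half-spaces through $x$ with normals at angle $\theta$ disagree on a set of measure $\approx \theta r^n$ of $\partial B(x,r)$. Hence the triangle inequality for the misclassification sets, transported between the spheres of radii $r$ and $r/2$ via radial averaging over $s\in(r/2,r)$, gives $\theta\lesssim \ve$ linearly. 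This transport step, which must use only the linear size of $\ve$ together with coarse flatness from the qualitative step, is where I expect the real work to lie.
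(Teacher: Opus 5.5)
Your easy direction is fine. The hard direction, however, rests entirely on the stability estimate $|N(x,r)-N(x,r/2)|\lesssim\delta$, and on its analogue for nearby centres, with a bound linear in the $\ve$-coefficients. You leave exactly this unproved, and the mechanism you sketch cannot give it.

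The coefficient $\ve_n(x,s)$ constrains $\Omega\cap S(x,s)$ one sphere at a time, so data at a fixed centre say nothing about how the optimal half-space rotates with $s$. For example, the twisted set $\Omega^+=\{y:\langle y-x,N(|y-x|)\rangle>0\}$, with $N(\cdot)$ rotating by a fixed angle for $s\in(r/2,r)$, has $\ve_n(x,s)=0$ for every $s$. Radial averaging over $s$ does not change this. To transport information from $S(x,r)$ to $S(x,r/2)$ you must know how $\pom$ sits in the annulus between them. With only coarse flatness $\delta'$ (or slope $L$), this gives $|N(x,r)-N(x,r/2)|\lesssim\delta+\delta'$, and chaining through intermediate radii does not remove the $\delta'$ term. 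A linear bound would therefore require flatness at level $\delta$, which is what you are trying to prove.

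In the plane an off-centre transport does work. A graph with $L<1$ meets each circle in exactly two points, so one coefficient yields a second difference, and \cref{lem:second-difference} compares the slopes at $x$ at scales $t$ and $t/2$ through the midpoint $x+t/2$. For $n\geq 2$, however, $\ve_n(y,s)$ is only an average over the $(n-1)$-dimensional set $\pom\cap S(y,s)$.

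The same circularity breaks your corkscrew argument for $\beta\lesssim\ve^c$. The half-space $H^+_{x,s}$ relative to which a corkscrew ball is misclassified varies with $s$; in the twisted example, points of $\pom$ far from $\partial H^+_{x,r}$ lie on $\partial H^+_{x,s}$. So you cannot integrate over $s\in(r/2,r)$ against a fixed plane. Hence neither the power-law flatness $\beta\lesssim r^c$ that you use to obtain a small-slope graph, nor the sharp Hölder bound, is established.

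What is missing is a global mechanism that turns sphere-by-sphere information at all centres and scales into linear control, while absorbing the unavoidable nonlinear errors. Even for a graph of slope $L$, the coefficients match the corresponding linear quantity for the graph function only up to errors of order $L^2$ times the flatness at comparable and larger scales. The paper handles this with Littlewood--Paley theory, in these steps:
\begin{enumerate}
\item For $u=\chi f$, it uses the exact identity $|\xi_r*g(x_0)|=r\,a_{\eta,\Omega_g^+}(\vphi_g(x_0),r)$ with a cone-truncated kernel (\cref{thm:LP-equals-a-rho}).
\item It compares $|a_\eta-a_\rho|\lesssim L^2\sum_{h\ge1}2^{-Nh}\mathfrak b_g(x_0,2^hr)$ (\cref{thm:difference-of-a-coeff}).
\item It gets $a_\rho\lesssim r^\alpha$ by integrating $\wt\ve\le\ve_n$ over radii (\cref{thm:a-eta-decay}).
\item It bounds $\mathfrak b_k\lesssim\sum_j2^{-|j-k|}A_j$ (\cref{thm:bk-upper-bound-Aj}).
\end{enumerate}
For $L\le\lambda_0$ the $L^2$ terms are absorbed, giving $\|P_ju\|_\infty\lesssim 2^{-(1+\alpha)j}$, which is $C^{1,\alpha}$.

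To reach small slope, the paper never needs a rate for $\beta$. Vanishing Reifenberg flatness (\cref{lemreifemberg} together with Theorem C) permits a tilt of $C\epsilon$ per scale, which could accumulate. So the paper stops at the largest cube $Q_1$ where the tilt exceeds $\lambda_0/10$, and builds a David--Semmes approximating Lipschitz graph down to scale $\ell(Q_1)$. It then runs the same Littlewood--Paley loop to obtain $\sum_{k=k_0}^{k_1}\mathfrak b_k\lesssim 2^{-k_0\alpha}+\epsilon^{1/2}\ll\lambda_0$, which is a contradiction.
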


We remark that one of the implications in the theorem is immediate: if $\Omega$ is a $C^{1,\alpha}$ domain, then
$\ve_n(x,r)\lesssim r^{\alpha}.$
The converse implication is the main contribution of our work.
In the plane, when $\Omega$ is a Jordan domain satisfying \cref{eqhyp91}, Casey's theorem implies that $\Omega$ is a $C^{1,\beta}$
domain and thus its boundary is $1$-Ahlfors regular. The condition \cref{eqhyp91} implies \cref{eqhyp92} (because $\ve_1(x,r)\lesssim\ve(x,r)$), and \cref{teomain} ensures that $\Omega$ is a $C^{1,\alpha}$ domain.

While working on the present paper, we learned that Casey, Engelstein, and Toro were working on a similar project, independently of us.
Very recently they published a preprint \cite{CET} where they prove the following:

\begin{theoremE}
  For $n\geq1$, let $\Omega^+, \Omega^-\subset\R^{n+1}$ be open and disjoint. For fixed $\alpha>0$ and $r_0>0$, suppose that there
  exists a constant $C>0$ such that
  \begin{equation}\label{eqCar00*}
    \alpha_{\Omega^\pm}(x,r)\leq C\,r^{2\alpha}\quad \mbox{ for all $x\in\pom^+\cup \pom^-$ and all $0<r\leq r_0$.}
  \end{equation}
  Then $\pom^+=\pom^-$ is a $C^{1,\beta}$ manifold, with $\beta>0$ depending on $\alpha$.
\end{theoremE}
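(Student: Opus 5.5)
The plan is to turn the single-sphere hypothesis \cref{eqCar00*} into volume estimates by integrating over radii, then deduce flatness with a power rate, and finally obtain a $C^{1,\beta}$ parametrization from a Reifenberg-type argument. I will use only two properties of $\alpha_{\Omega^\pm}(x,r)$ from \cref{secprelim}. First, it dominates (a power of) the normalized $\HH^n$-measure of the ``wrong'' parts $(\partial B(x,r)\cap H^+)\setminus\Omega^+$ and $(\partial B(x,r)\cap H^-)\setminus\Omega^-$ for some half-space $H^+=H^+_{x,r}$ with $x\in\partial H^+$. Second, the exponent $2\alpha$ is matched to that power. Write $r_0$ for the scale in \cref{eqCar00*}, fix $x\in\pom^+\cup\pom^-$ and $0<r\le r_0$, and let $\nu(x,r)$ be the inner normal of $H^+_{x,r}$.

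\textbf{Step 1 (volume estimates).} Integrating the sphere estimate over $s\in[r/2,r]$ is not immediate, because the optimal half-space depends on $s$. I would first compare the half-spaces at radii $s$ and $s'$ in $[r/2,r]$, as follows. If $\nu(x,s)$ and $\nu(x,s')$ were at a large angle, then a cone of directions of $\HH^n$-measure $\gtrsim$ that angle times $r^n$ would be forced to lie in $\Omega^+$ at radius $s$ and in $\Omega^-$ at radius $s'$. Radial rays are not forced to stay in one phase, however, so this alone gives no contradiction. The argument closes by combining it with Step 2, applied at boundary points on those rays, where the phase changes. The outcome I aim for is a single half-space $H^+_{x,r}$ with
\[
|(B(x,r)\setminus B(x,r/2))\cap H^+_{x,r}\setminus\Omega^+| + |(B(x,r)\setminus B(x,r/2))\cap H^-_{x,r}\setminus\Omega^-|\lesssim r^{n+1+c\alpha}.
\]
Summing over dyadic annuli then gives the same bound on the whole ball $B(x,r)$.

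\textbf{Step 2 (flatness with a rate).} Suppose $z\in\pom^+\cup\pom^-$ lies in $B(x,r)$ at distance $\delta r$ from $\partial H^+_{x,r}$, say on the $H^+$ side. Then $B(z,\delta r/2)\subset H^+_{x,r}$. Applying \cref{eqCar00*} at $z$ on the spheres $\partial B(z,s)$, $s\le\delta r/2$, shows that about half of $B(z,\delta r/2)$ lies outside $\Omega^+$. This gives $(\delta r)^{n+1}\lesssim r^{n+1+c\alpha}$, so $\delta\lesssim r^{c\alpha/(n+1)}$. The same argument with small balls shows that both phases have positive density at every point of $\pom^+\cup\pom^-$. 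Hence every boundary point of one phase is a limit of points of the other, and therefore $\pom^+=\pom^-=:\Gamma$.

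\textbf{Step 3 (tilt control and regularity).} Comparing the volume estimates of Step 1 at radii $r$ and $r/2$ gives $|\nu(x,r)-\nu(x,r/2)|\lesssim r^{\gamma}$ for some $\gamma>0$. Summing this geometric series yields a limit normal $\nu(x)$ with $|\nu(x)-\nu(x,r)|\lesssim r^\gamma$. For $y\in\Gamma\cap B(x,r)$, comparing the half-spaces of $x$ and $y$ at scale $2|x-y|$ gives $|\nu(x)-\nu(y)|\lesssim|x-y|^\gamma$. So $\Gamma$ is Reifenberg flat with a power-type vanishing rate and has Hölder-continuous normals. By Step 2, locally $\Omega^+$ coincides with one side of $\Gamma$ and $\Omega^-$ with the other, up to sets that the density argument excludes. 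The standard Reifenberg parametrization then writes $\Gamma$ locally as the graph of a $C^{1,\beta}$ function over $\partial H^+_{x,r}$, with $\beta$ depending on $\alpha$ (and $n$).

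\textbf{Main obstacle.} The hardest step is Step 1: producing one half-space that works simultaneously for a whole range of radii using only single-sphere information. My first attempt would be a continuity and bootstrapping argument. Start from a scale where the boundary is known to be flat, and propagate downward in scale, using the density estimate of Step 2 at intermediate boundary points to prevent a sudden rotation of the optimal half-space.
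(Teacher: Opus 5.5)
Theorem E is not proved in this paper. It is quoted from \cite{CET} and used only in a remark, so there is no in-paper argument to compare with, and your proposal has to stand on its own. Its later parts are fine. $\pom^+=\pom^-$ follows directly from the two-sided density given by the single-sphere bounds $\ve_{n,\Omega^\pm}(x,r)\lesssim r^\alpha$ (via \cref{eqclau10}). Once a volume estimate with one half-space per ball is available, the tilt summation and the passage from power-decaying bilateral $\beta$-numbers to a $C^{1,\gamma}$ graph are standard. The gap is Step 1, which carries the whole content of the theorem, and for which no working argument is given.

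Your sketch is circular. Step 2 bounds the distance from $z$ to $\partial H^+_{x,r}$ using the volume bound relative to the single half-space $H^+_{x,r}$, and that bound is the output of Step 1. More importantly, even a non-circular version cannot work. The only information you extract at boundary points $z\neq x$ is two-sided density at scales much smaller than $|z-x|$, and that does not exclude twisting. To see this, let $\nu(\rho)$ be a unit vector rotating slowly in $\rho$, and near $x$ set $\Omega^\pm=\{y:\pm(y-x)\cdot\nu(|y-x|)>0\}$.
\begin{itemize}
\item Every sphere centered at $x$ is split exactly into complementary hemispheres, so $\ve_{n,\Omega^\pm}(x,\rho)=0$ for all $\rho$.
\item Every other boundary point lies on a smooth hypersurface, so it has two-sided density $1/2$.
\item No single half-space serves for two radii whose normals differ by a fixed angle.
\end{itemize}
This configuration violates the hypothesis only at points $z$ with $|z-x|\approx s$, at scales comparable to $s$, where the surface is visibly non-flat. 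For $n\ge2$ it is saddle-shaped at points $z$ in directions orthogonal to the rotation plane, with normalized deviation $\approx s\,|\nu'(s)|$; for $n=1$ it is a spiral. Your plan never uses the hypothesis at such scales. Converting it into $|\nu(x,s)-\nu(x,2s)|\lesssim s^{\gamma}$, uniformly in $x$ and with no prior flatness, is the missing argument. Your fallback of propagating downward from ``a scale where the boundary is known to be flat'' presupposes a starting scale that nothing in your argument produces.

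It is instructive to see how this paper handles the same issue for \cref{teomain}. The hypothesis there is exactly what you retain after invoking \cref{eqclau10}. The initial flatness \cref{eqbeta100} comes from Ahlfors regularity of $\pom$, through \cref{lemreifemberg} (via \cite[Lemma 6.1]{FTV2}) together with the two-sided corkscrew property from \cite{CTV]}. The authors stress that Ahlfors regularity is an important ingredient of their argument. Only after that do they build the approximating Lipschitz graph and run the Littlewood--Paley absorption. Theorem E assumes no Ahlfors regularity. By replacing $\alpha_{\Omega^\pm}$ with the strictly weaker $\ve_{n,\Omega^\pm}$ at the outset, you discard the extra information (sensitivity to small sets on spheres) that an argument without Ahlfors regularity would presumably have to exploit. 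As written, your proposal is in effect an attempt to prove \cref{teomain} without Ahlfors regularity, with its central step left open.
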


For the precise definition of the coefficients $\alpha_{\Omega^\pm}$, see \cref{secprelim}. We remark that the theorem above
is stated in \cite{CET} in terms of different coefficients. However, as explained by the authors, their main result is equivalent to the one above.

It is worth comparing the main result of our work, \cref{teomain}, with the preceding theorem of Casey, Engelstein, and Toro. First we remark that \cref{teomain} is also valid for pairs of disjoint open sets $\Omega^+$, $\Omega^-$ (such that the union of their boundaries is Ahlfors regular). This follows easily by combining \cref{teomain} with Theorem C.
Further, \cref{teomain} gives the sharp regularity $C^{1,\alpha}$.
Another difference is that
Theorem E by Casey et al.\ is stated in terms of the stronger coefficients $\alpha_{\Omega^\pm}(x,r)$. Indeed,
as shown in \cite{FTV1}, it always holds that $\ve_{n,\Omega^\pm}(x,r)^2\lesssim \alpha_{\Omega^\pm}(x,r)$, while the opposite inequality may fail. This follows easily from the fact that the $\alpha$ coefficients detect sets of Hausdorff dimension $s>n-2$ in the $n$-dimensional sphere $S(x,r)$, while $\ve_n$ only detects sets of positive $\HH^n$-measure (see \cite{FTV1} for more details).
On the other hand, a very nice feature of Theorem E is that it
does not require the assumption of Ahlfors regularity, while \cref{teomain} does (in fact this is an important ingredient of our arguments).

The results of Casey, Engelstein, and Toro and ours were obtained almost simultaneously, and we believe that both are interesting in their own right. Finally, notice that by combining the two works, one easily deduces that, under the assumptions of Theorem E for $0<\alpha<1$,
one gets the sharp result that $\pom^+=\pom^-$ is a $C^{1,\alpha}$ manifold. In fact, once one knows that $\pom^+$ is a $C^{1,\beta}$ manifold, it turns out that this boundary is $n$-Ahlfors regular, and since $\ve_{n,\Omega^\pm}(x,r)^2\lesssim \alpha_{\Omega^\pm}(x,r)$, the assumptions of \cref{teomain} are satisfied.\footnote{The Ahlfors regularity of $\pom$ is clear if $\pom$ is bounded. When $\pom$ is unbounded, instead one could apply \cref{thm:Rn-unit-ball} below.}

The plan of the paper is as follows. \Cref{secprelim} contains some preliminary notation and background results.
In \cref{secplane} we consider the planar case of \cref{teomain}. In this case, the arguments are simpler and our theorem even extends to $\alpha=1$.
Turning to arbitrary dimensions in \cref{secLipschitgraph},
we prove a local result for the particular
case of Lipschitz domains with sufficiently small constant.
The arguments in \cref{secplane} use second differences,
while the argument in \cref{secLipschitgraph} uses the
characterization of $C^{1,\alpha}$ functions in terms of
Littlewood--Paley pieces.
The main difficulty is to extend and adapt these methods to
Lipschitz domains and radial square functions in $\R^{n+1}$.

The general Ahlfors regular case of \cref{teomain} is treated
in \cref{secgeneral,sec-Lipgraph,secfinal}.
The strategy is to approximate $\pom$ locally by a Lipschitz
graph, transfer a suitable variant of the estimates for the
$\ve^2$-function of $\Omega$ to that graph, and then prove that
its slope is very small. This allows us to apply the previous
result for Lipschitz domains with small Lipschitz constant.

\subsection*{Acknowledgements.}
X.T. was  partially supported by MICIU (Spain) under the grant PID2024-160507NB-I00.
T.W. was supported in part by the Department of Education's Graduate Assistance in Areas of National Need Award (P200A240169).
Part of this work was carried out during a visit of the second author to the Universitat Aut\`onoma de Barcelona.
T.W. thanks UAB for its hospitality and Ben Jaye for supporting the visit.

\subsection*{AI disclosure}
The results of this paper and our original proofs were
obtained without the use of AI tools.
We subsequently used ChatGPT to review a preliminary
version of the manuscript. This review suggested two
simplifications of arguments that we had already established.

The first concerns \cref{thm:bk-upper-bound-Aj},
which we originally formulated using a square function
involving the second differences
$\delta_h^2f(x)=f(x+h)+f(x-h)-2f(x)$.
The second concerns the comparison argument in
\cref{lemmaguai*}, used to prove
\cref{lem7.11}.
Our original proof introduced auxiliary symmetry coefficients,
in the spirit of \cite{FTV2}, and used
\cite[Lemma 14.12]{FTV2}.
The current argument avoids these coefficients and the use
of that lemma, making the proof more self-contained.
The estimates and their presentation in the manuscript
differ from those initially suggested by ChatGPT.

\vv

\section{Preliminaries}\label{secprelim}
In the paper, constants denoted by $C$ or $c$ depend just on the dimension unless otherwise stated. We will write $a\lesssim b$ if there is $C>0$ such that $a\leq Cb$. We write $a\approx b$ if $a\lesssim b\lesssim a$.

Open balls in $\R^{n+1}$ centered at $x$ with radius $r>0$ are denoted by $B(x,r)$, and closed balls by
$\bar B(x,r)$. For an open or closed ball $B\subset\R^{n+1}$ with radius $r$, we write $r(B)=r$.
An open annulus in $\R^{n+1}$ centered at $x$ with inner radius $r_1$ and outer radius $r_2$ is denoted by
$A(x,r_1,r_2)$, and the corresponding closed annulus by $\bar A(x,r_1,r_2)$.
We use the two notations $S(x,r)\equiv \partial B(x,r)$ for spheres
in $\R^{n+1}$ centered at $x$ with radius $r$.

\subsection{Uniform rectifiability,  the corkscrew condition,  and \texorpdfstring{$\alpha$}{α} type coefficients}
A set $E\subset \R^{n+1}$ is called $n$-rectifiable if there is a countable collection of Lipschitz maps $g_i:\R^n\to\R^{n+1}$
such that
$$\HH^n\Big(E\setminus \bigcup_i g_i(\R^n)\Big)=0.$$

We say that a Borel measure $\mu$ in $\R^{n+1}$ is uniformly $n$-rectifiable if it is $n$-Ahlfors regular and there exist $\theta,M>0$ such that, for all $x\in\supp\mu$
and $0<r\leq \diam(\supp\mu)$, there exists a Lipschitz mapping $g$ from the ball $B_n(0,r)\subset\R^n$ to $\R^{n+1}$ with
$\Lip(g)\leq M$ such that
$$\mu(B(x,r)\cap g(B_n(0,r)))\geq \theta\,r^n.$$
Also, a set $E\subset\R^{n+1}$ is called uniformly $n$-rectifiable
if $\HH^n|_E$ is uniformly $n$-rectifiable.

Let $\Omega\subset \mathbb{R}^{n+1}$ be an open set. We say that $\Omega$ satisfies the \textit{$c$-corkscrew condition} if there exists some $c>0$ such that for all $x\in \partial \Omega$ and $0<r\leq\mathrm{diam}(\partial\Omega)$ there exists some ball $B\subset \Omega \cap \overline{B(x,r)}$ with $r(B)\geq cr$.

Next, we say that $\Omega$ satisfies the two-sided $c$-corkscrew
condition if, for every $x\in\partial\Omega$ and
$0<r\leq\operatorname{diam}(\partial\Omega)$, there exist balls
$
  B^+\subset\Omega\cap B(x,r)$,
and
$B^-\subset
  (\R^{n+1}\setminus\overline\Omega)\cap B(x,r),
$
with $r(B^\pm)\geq cr$.

We next introduce the spectral $\alpha$ coefficients
associated with an open set $\Omega$.
Given an open set $V\subset \bS^n$,
where $\bS^n=
  S(0,1)$ is the unit sphere in $\R^{n+1}$, one says that $u\in W^{1,2}_0(V)$ is a Dirichlet eigenfunction of
$V$ for the Laplace-Beltrami operator $-\Delta_{\bS^n}$ if $u\not\equiv 0$ and
$$-\Delta_{\bS^{n}} u = \lambda\,u,$$
for some $\lambda\in\R$. Here $W^{1,2}_0(V)$ is the closure in the Sobolev space $W^{1,2}(V)$ of the $C^\infty$ functions  with compact support in $V$.
The number $\lambda$ is the eigenvalue associated with $u$. It is well known that all the eigenvalues of the Laplace-Beltrami operator are non-negative.
The characteristic constant of $V$ is defined as the non-negative number $\alpha_V$ such that $\lambda_V = \alpha_V(n-1+\alpha_V)$, where $\lambda_V$ is the smallest eigenvalue.

The Friedland-Hayman inequality \cite{FH} asserts that,
for any two disjoint open subsets $V_1,V_2\subset \bS^{n}$
and $\alpha_i\equiv\alpha_{V_i}$,
$$\alpha_1+\alpha_2-2\geq 0.$$
This inequality plays an essential role in the proof of
the Alt-Caffarelli-Friedman  monotonicity formula, which is a basic tool in many free boundary problems.

Given two disjoint open sets $\Omega_1,\Omega_2\subset \R^{n+1}$ and $x\in\R^{n+1}$, $r>0$, we consider
the re-scaled sets
$V_i(x,r) =\{r^{-1}(x-y) \, : \, y \in S(x,r)\cap \Omega_i\}$, and we denote
\begin{equation}\label{eqalfai}
  \alpha_i(x,r) = \alpha_{V_i(x,r)}.
\end{equation}
Notice that $\alpha_1(x,r) + \alpha_2(x,r) - 2\geq0$ by the Friedland-Hayman  inequality.
Then we define
$$\alpha_{\Omega_1,\Omega_2}(x,r) =  \min(1,\alpha_1(x,r) + \alpha_2(x,r) - 2).$$
For disjoint open sets $\Omega^+$, $\Omega^-$, we also use the notation $\alpha_{\Omega^\pm}(x,r)$.

In the plane, an easy computation (see \cite{AKN}) shows that the Carleson coefficient $\ve(x,r)$ associated with $\Omega_1$ and $\Omega_2$ satisfies
\begin{equation}\label{eqalfai2}
  \ve_{\Omega_1,\Omega_2}(x,r)^2\lesssim \alpha_{\Omega_1,\Omega_2}(x,r) .
\end{equation}
Further, if $I_1(x,r)$ and $I_2(x,r)$ are complementary arcs in $S(x,r)$, one can check that
$$\alpha_{\Omega_1,\Omega_2}(x,r) %
  \approx \ve(x,r)^2.$$

In higher dimensions we have
\begin{equation}\label{eqclau10}
  \ve_{n,\Omega_1,\Omega_2}(x,r)^2\lesssim \alpha_{\Omega_1,\Omega_2}(x,r).
\end{equation}
This was proved in \cite{FTV1}. The arguments are much more delicate than the ones for \cref{eqalfai2}, and they involve suitable quantitative Faber-Krahn inequalities arising from a remarkable work of Allen, Kriventsov, and Neumayer \cite{AKN}.

\subsection{Dyadic lattices}

Given an $n$-Ahlfors regular set $E\subset\R^{n+1}$ and $\mu=\HH^n|E$, we consider
the dyadic lattice of ``cubes'' introduced by David and Semmes in \cite[Chapter 2]{DS1}. The properties satisfied by $\DD_E$ are the following.
Assume first, for simplicity, that $\diam(E)=\infty$. Then for each $j\in\Z$ there exists a family $\DD_{E,j}$ of Borel subsets of $E$ (the dyadic cubes of the $j$-th generation) such that:
\begin{itemize}
  \item[$(a)$] each $\DD_{E,j}$ is a partition of $E$, i.e.\ $E=\bigcup_{Q\in \DD_{E,j}} Q$ and $Q\cap Q'=\varnothing$ whenever $Q,Q'\in\DD_{E,j}$ and
        $Q\neq Q'$;
  \item[$(b)$] if $Q\in\DD_{E,j}$ and $Q'\in\DD_{E,k}$ with $k\leq j$, then either $Q\subset Q'$ or $Q\cap Q'=\varnothing$;
  \item[$(c)$] for all $j\in\Z$ and $Q\in\DD_{E,j}$, we have $2^{-j}\lesssim\diam(Q)\leq2^{-j}$ and $\mu(Q)\approx 2^{-jn}$;
  \item[$(d)$] there exists $C>0$ such that, for all $j\in\Z$, $Q\in\DD_{E,j}$, and $0<\tau<1$,
        \begin{equation}\label{small boundary condition}
          \begin{split}
            \mu\big(\{x\in Q:\, & \dist(x,E\setminus Q)\leq\tau2^{-j}\}\big) \\&+\mu\big(\{x\in E\setminus Q:\, \dist(x,Q)\leq\tau2^{-j}\}\big)\leq C\tau^{1/C}2^{-jn}.
          \end{split}
        \end{equation}
        This property is usually called the {\em small boundaries condition}.
        From \cref{small boundary condition}, it follows that there is a point $x_Q\in Q$ (the center of $Q$) such that $\dist(x_Q,\supp\mu\setminus Q)\gtrsim 2^{-j}$.
\end{itemize}
We set $\DD_E:=\bigcup_{j\in\Z}\DD_{E,j}$.

In case that $\diam(E)<\infty$, the families $\DD_{E,j}$ are only defined for $j\geq j_0$, with
$2^{-j_0}\approx \diam(E)$, and the same properties above hold for $\DD_E:=\bigcup_{j\geq j_0}\DD_{E,j}$.

Given a cube $Q\in\DD_{E,j}$, we say that its side length is $2^{-j}$, and we denote it by $\ell(Q)$. Notice that $\diam(Q)\leq\ell(Q)$.
We also denote
\begin{equation}\label{defbq}
  B(Q):=B(x_Q,c_1\ell(Q)),\qquad B_Q = B(x_Q,\ell(Q)),
\end{equation}
where $c_1>0$ is some fixed
constant so that $B(Q)\cap\supp\mu\subset Q$, for all $Q\in\DD_E$. Clearly, we have $Q\subset B_Q$.

For $\lambda>1$, we write
$$\lambda Q = \bigl\{x\in \supp\mu:\, \dist(x,Q)\leq (\lambda-1)\,\ell(Q)\bigr\}.$$

The side length of a \enquote{true cube} $P\subset\R^{n+1}$ is also denoted by $\ell(P)$. On the other hand, given a ball $B\subset\R^{n+1}$, its radius is denoted by $r(B)$. For $\lambda>0$, the ball $\lambda B$ is the ball concentric with $B$ with radius $\lambda\,r(B)$.

\subsection{Geometric \texorpdfstring{$\beta$}{β} coefficients}
Given $E\subset\R^{n+1}$, a ball $B$, and a hyperplane $L$, we denote
$$\beta_E(B,L) = \sup_{y\in E\cap B} \frac{\dist(y,L)}{r(B)}$$
and
$$
  b\beta_{E}(B,L) =  \sup_{y\in E\cap B} \frac{\dist(y,L)}{r(B)} +
  \sup_{y\in L\cap B}\!\! \frac{\dist(y,E)}{r(B)} .$$
We set
$$\beta_{E}(B) = \inf_L \beta_{E}(B,L), \qquad b\beta_{E}(B) = \inf_L b\beta_{E}(B,L),$$
where the infimum is taken over all hyperplanes $L\subset\R^{n+1}$. For a ball $B=B(x,r)$, we also write
$$\beta_{E}(x,r)=\beta_{E}(B),\qquad b\beta_{E}(x,r)=b\beta_{E}(B).$$

\subsection{Notation and definitions for graphs}
As we will mostly work with graphs of functions and the
domains they define, we specialize some preceding definitions
to this setting.
We start with \emph{vertical} $\beta$ numbers.
For a function $v\colon\mathbb{R}^n\to \mathbb{R}$,
we define the graph $\beta$-number at $x_0\in\mathbb{R}^n$ and $r>0$ as
\begin{equation}
  \mathfrak b_v(x_0,r)=\frac1r\inf_{a\in\mathrm{Aff}(\mathbb{R}^n)} \lVert v-a \rVert_{L^\infty(B(x_0,r))},
\end{equation}
where $\mathrm{Aff}(\mathbb{R}^n)$ is the space of affine functions on $\mathbb{R}^n$.

For a function $g\colon D\to\R$, where $D$ is either $\R^n$ or a ball therein, we write
\begin{align*}
  \varphi_g(x_0) := (x_0, g(x_0)) \quad\text{and}\quad \Gamma_g := \{ \varphi_g(x_0) : x_0\in D \}.
\end{align*}
We denote the set above the graph $\Gamma_g$ by
\[
  \Omega_g^+ :=\{(x_0,t)\in D\times \R:t>g(x_0)\}.
\]
We also define
\begin{align*}
  \wt\ve_g(x_0,r) := \frac{1}{r^n} \left|  \mathcal{H}^{n}(\Omega_g^+\cap S(\varphi_g(x_0),r)) - \frac{1}{2} \mathcal{H}^{n}(S(\varphi_g(x_0),r)) \right|,
\end{align*}
whenever $B(x_0,r) \subset D$.
Notice that $\wt\ve_g(x_0,r)\leq \ve_{n,\Omega_g^+}(\varphi_g(x_0),r)$.

We say that $g$ satisfies a \emph{Lipschitz condition at $z_0\in D$} if there exists a constant $0\le L<1$ such that
\begin{equation}\label{eq:lipschitz-at-center}
  |g(y_0)-g(z_0)| \le L|y_0-z_0|
  \qquad\text{for every }y_0\in D.
\end{equation}

\section{The difficult implication in the planar case} \label{secplane}
If $f$ is Lipschitz with constant $L<1$, then the graph $\Gamma_f$ and the circle $S(\varphi_f(z),r)$ intersect at exactly two points, and thus $\wt\ve_f(z,r) = \ve_{1,\Omega_f^+}(\varphi_f(z),r)$.
We will use \emph{centered differences} throughout this section:
\[
  \delta_h^2f(z)=f(z+h)+f(z-h)-2f(z).
\]

\begin{lemma}\label{thm:epsilon-to-second-difference}
  Let $f\colon I\to\mathbb{R}$ be $L$-Lipschitz with $L<1$ and $0<\alpha\leq1$. Suppose that
  \[
    \wt\ve_f(z,r)\leq C_{\varepsilon} r^\alpha
  \]
  for some $z\in I$ and $r$ as below. If $h>0$,
  \[
    r :=|\vphi_f(z+h)-\vphi_f(z)| \quad\text{and}\quad [z-r,z+r]\subset I,
  \]
  then
  \begin{equation}
    |\delta_h^2f(z)|\leq M h^{1+\alpha},\qquad
    M=(1+L)(1+L^2)^{(1+\alpha)/2}C_{\varepsilon}.
  \end{equation}
\end{lemma}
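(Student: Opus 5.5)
The plan is to read off, in polar coordinates around $P:=\vphi_f(z)$, what $\wt\ve_f(z,r)$ measures. It measures the angular defect of the upper arc of $S(P,r)$. We then compare the left intersection point of $\Gamma_f$ with the circle against the point of the circle antipodal to $\vphi_f(z+h)$.

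First, since $f$ is $L$-Lipschitz with $L<1$, the function $t\mapsto|\vphi_f(t)-P|$ is strictly increasing in $|t-z|$ on each side of $z$. Moreover it is at least $|t-z|$, and it equals $r$ at $t=z+h$. Since $[z-r,z+r]\subset I$, there is a unique $k\in(0,r]$ with $|\vphi_f(z-k)-P|=r$, and $\Gamma_f\cap S(P,r)=\{\vphi_f(z+h),\vphi_f(z-k)\}$. Write
\[
\vphi_f(z+h)=P+r(\cos\theta_1,\sin\theta_1),\qquad \vphi_f(z-k)=P+r(\cos(\pi-\psi),\sin(\pi-\psi)),
\]
with $|\theta_1|,|\psi|<\pi/4$. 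Then $\Omega_f^+\cap S(P,r)$ is the open arc from angle $\theta_1$ to angle $\pi-\psi$. Its length is $r(\pi-\psi-\theta_1)$, so
\[
\wt\ve_f(z,r)=|\theta_1+\psi|=:\eta\le C_\ve r^\alpha .
\]

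Next, let $a:=f(z+h)-f(z)$, so that $a=h\tan\theta_1$, and consider the antipodal point $Q:=P-(h,a)=P+r(\cos(\pi+\theta_1),\sin(\pi+\theta_1))\in S(P,r)$. Its second coordinate is $f(z)-a$, so
\[
\delta_h^2f(z)=f(z-h)-\big(f(z)-a\big)=f(z-h)-Q_2 .
\]
The points $X:=\vphi_f(z-k)$ and $Q$ lie on $S(P,r)$ at angles $\pi-\psi$ and $\pi+\theta_1$. The arc between them has length $r\eta$, hence $|X-Q|\le r\eta$. In particular $|X_1-Q_1|=|k-h|\le r\eta$ and $|X_2-Q_2|\le r\eta$. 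Using the Lipschitz bound,
\[
|\delta_h^2f(z)|\le |f(z-h)-f(z-k)|+|X_2-Q_2|\le L|h-k|+|X_2-Q_2|\le (1+L)\,r\eta .
\]

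Finally, convert $r$ into $h$: $r=h\sqrt{1+\tan^2\theta_1}\le h(1+L^2)^{1/2}$, since $|\tan\theta_1|=|a|/h\le L$. Combining,
\[
|\delta_h^2f(z)|\le (1+L)\,C_\ve\,r^{1+\alpha}\le (1+L)(1+L^2)^{(1+\alpha)/2}C_\ve\,h^{1+\alpha}=Mh^{1+\alpha}.
\]

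The only delicate point is the first step: it requires the exact two-point intersection and the existence of the left intersection inside $I$, and these are supplied by $L<1$ and $[z-r,z+r]\subset I$. Once that identification is in place, the estimate is the elementary chord-versus-arc comparison above. The subtlety it handles is that the left crossing occurs at $z-k$ rather than at $z-h$.
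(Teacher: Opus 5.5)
Your proof is correct and is essentially the paper's argument. Your vector $X-Q=X_-+X_+-2P$ is exactly the paper's $v_++v_-$, which the paper evaluates as $2r\sin(\wt\ve_f(z,r)/2)\le r\,\wt\ve_f(z,r)$ where you use chord $\le$ arc. After that, both proofs split $\delta_h^2 f(z)$ using the Lipschitz bound on $|h-k|$ and convert $r$ to $h$ in the same way; your explicit check of the two-point intersection, which the paper only asserts at the start of the section, is a useful addition.
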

\begin{proof}
  By the choice of $r$, the right intersection point of $\Gamma_f\cap S(\vphi_f(z),r)$ is $X_+ :=\vphi_f(z+h)$.
  Write the left one as $X_- :=\vphi_f(z-k)$, where $k\in (0,r]$, and put
  $X :=\vphi_f(z)$ and $v_\pm :=X_\pm-X$.

  Let $\theta\in(0,2\pi)$ be the angle of the arc of $\partial B(\vphi_f(z),r)$ in $\Omega_f^+$. Then
  \[
    \wt{\varepsilon}_f(z,r)=|\theta-\pi|,
  \]
  and elementary circle geometry gives
  \begin{equation}\label{eq:vector-estimate}
    |v_++v_-|=2r|\cos(\theta/2)|
    =2r\sin\bigl({\wt\varepsilon}_f(z,r)/2\bigr)
    \leq C_{\varepsilon} r^{1+\alpha}.
  \end{equation}

  Recall that
  \[
    v_++v_-=(h-k,\ f(z+h)+f(z-k)-2f(z)).
  \]
  Consequently,
  \begin{align*}
    |\delta_h^2f(z)|
     & \leq |f(z+h)+f(z-k)-2f(z)|+|f(z-h)-f(z-k)| \\
     & \leq |v_++v_-|+L|h-k|                      \\
     & \leq(1+L)|v_++v_-|                         \\
     & \leq(1+L)C_{\varepsilon} r^{1+\alpha}.
  \end{align*}
  Finally,
  \[
    h\leq r=\sqrt{h^2+|f(z+h)-f(z)|^2}\leq\sqrt{1+L^2}\,h,
  \]
  which proves the claim.
\end{proof}

In the following we prove that decay of second differences on an interval $I\subset \mathbb{R}$ implies $C^{1,\alpha}$-regularity on that interval.
Alternatively, and this is the approach we will choose in higher dimensions, it is possible to multiply by a bump function so that one can work with functions defined on all of $\mathbb{R}$.
In the planar case it is not difficult to show that the product retains the decay of the second order differences by expanding the second difference of the product.

\begin{lemma}\label{lem:second-difference}
  Let $f\colon I\to\mathbb{R}$ be continuous and $0<\alpha\leq1$. Suppose
  \begin{equation}\label{eq:criterion-hypothesis}
    |\delta_h^2f(z)|\leq M|h|^{1+\alpha}
    \quad\text{whenever }z-h,z+h\in I.
  \end{equation}
  Then $f\in C^1(I)$ and
  \begin{equation}\label{eq:holder-constant}
    |f'(x)-f'(y)|\leq\frac{M}{2^\alpha-1}|x-y|^\alpha
    \qquad\forall x,y\in I.
  \end{equation}
\end{lemma}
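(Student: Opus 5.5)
The plan is the classical Zygmund-type argument: use difference quotients at dyadic scales. For $z\in I$ and $h>0$ with $[z,z+2h]\subset I$, set
\[
D(z,h):=\frac{f(z+h)-f(z)}{h}.
\]
Applying \eqref{eq:criterion-hypothesis} at the point $z+h$ with step $h$ gives $f(z+2h)-2f(z+h)+f(z)=\delta_h^2f(z+h)$. Dividing by $2h$ yields
\[
\bigl|D(z,2h)-D(z,h)\bigr| =\frac{|\delta_h^2 f(z+h)|}{2h}\le \frac{M}{2}h^{\alpha}.
\]
Iterating with $h_k=2^{-k}h$, the sequence $D(z,2^{-k}h)$ is Cauchy, since
\[
\sum_{k\ge1}\tfrac M2 (2^{-k}h)^\alpha=\tfrac{M}{2}\,\frac{h^\alpha}{2^\alpha-1}.
\]
This produces a limit $g(z)$ (a one-sided derivative along dyadic steps) with
\[
|D(z,h)-g(z)|\le \tfrac M2\,\frac{h^\alpha}{2^\alpha-1}.
\]
The same argument works for left difference quotients.

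Next I show that $g$ is the genuine derivative and is Hölder. For $x<y$ in $I$, let $h=y-x$. Comparing the right quotient at $x$ with the left quotient at $y$ gives
\[
|g(x)-g(y)|\le |g(x)-D(x,h)|+|D(y,-h)-g(y)|\le M\frac{h^\alpha}{2^\alpha-1}.
\]
This is exactly the constant in \eqref{eq:holder-constant}, because $D(x,h)=D(y,-h)$: both are the slope of the same chord. So the two halves each contribute $\tfrac M2$ and sum to the stated bound.

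I still need to show that $g$ is well defined as a two-sided derivative and that $f'=g$. The bound on $|D(z,h)-g(z)|$ holds for every small $h$, not only dyadic ones, because the base step $h$ in the iteration was arbitrary. So $\lim_{h\to0^+}D(z,h)$ exists and equals $g(z)$. The left limit also exists, and it equals the right limit: the symmetric difference $\bigl(f(z+h)-f(z-h)\bigr)/(2h)$ and the relation $D(z,h)-D(z,-h)=\delta_h^2f(z)/h=O(h^\alpha)$ force the two limits to coincide. Hence $f$ is differentiable with $f'=g$, and $g$ is continuous by the Hölder bound, so $f\in C^1(I)$.

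The main obstacle is bookkeeping near the endpoints of $I$, where centered differences need room on both sides. At a left endpoint only right quotients are available, and they require $[z,z+2h]\subset I$. The chord comparison for $x,y$ uses only the right quotient at $x$ and the left quotient at $y$, which are always available inside $I$. So I would state the constructions one-sidedly and check that each application of \eqref{eq:criterion-hypothesis} has both $z\pm h\in I$.
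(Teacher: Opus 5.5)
There is a genuine gap, at the step where you conclude that $\lim_{h\to0^+}D(z,h)$ exists.

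Your dyadic telescoping is correct and gives the right constant. For each base step $h$ with $[z,z+h]\subset I$ it produces a limit $g_h(z):=\lim_{k\to\infty}D(z,2^{-k}h)$ with $|D(z,h)-g_h(z)|\le \frac{M}{2}\frac{h^\alpha}{2^\alpha-1}$. Your final chord comparison $D(x,y-x)=D(y,-(y-x))$ is exactly the paper's last step.

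The problem is the sentence claiming that, because the base step was arbitrary, $\lim_{h\to0^+}D(z,h)$ exists. That does not follow. A priori $g_h(z)$ depends on $h$; it is only invariant under $h\mapsto 2^{-k}h$. Your argument uses the hypothesis only at the points $z+2^{-k}h$ of a single dyadic chain. That yields $|D(z,2s)-D(z,s)|\le \frac M2 s^\alpha$, which is compatible with $D(z,s)$ behaving like $\sin(2\pi\log_2 s)$ as $s\to0^+$.

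Both the identification $f'=g$ and your Hölder estimate rest on this base-independence. The Hölder estimate compares, for the pair $x<y$, the limits along the chains starting at $y-x$, so without independence the function being estimated changes with the pair. As written, the argument does not close.

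The paper fixes this by using information from neighbouring points. It fixes a base $t_0$ on a compact subinterval $J$ and notes that the telescoping bound is uniform in $x$. Hence $q^+_{2^{-m}t_0}$ converges uniformly to a continuous $v$. It then integrates:
$\int_a^b q^+_{t_m}\,dx = t_m^{-1}\bigl(\int_b^{b+t_m}f-\int_a^{a+t_m}f\bigr)\to f(b)-f(a)$. This gives $f(b)-f(a)=\int_a^b v$, hence $f'=v$. Once $f$ is known to be differentiable, the chain from any base converges to $f'(x)$, and your estimates apply verbatim.

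If you prefer to stay pointwise, a short maximum-principle argument also fills the gap:
\begin{itemize}
\item For $[z,z+s]\subset I$, put $\tilde f(w)=f(w)-f(z)-D(z,s)(w-z)$. It vanishes at $z$ and $z+s$ and has the same second differences as $f$.
\item Let $w^*$ be a point where $|\tilde f|$ attains its maximum $m$ on $[z,z+s]$; replace $\tilde f$ by $-\tilde f$ if needed so that $\tilde f(w^*)=m$.
\item Take $h=\min(w^*-z,\,z+s-w^*)$. One of $w^*\pm h$ is an endpoint, so $\delta_h^2\tilde f(w^*)\le -m$. This gives $m\le|\delta_h^2f(w^*)|\le M(s/2)^{1+\alpha}$.
\item Hence $|D(z,s')-D(z,s)|=|\tilde f(z+s')|/s'\le M(s/2)^\alpha$ for $s'\in[s/2,s]$.
\end{itemize}
This controls the non-dyadic scales and shows that $\lim_{h\to0^+}D(z,h)$ exists. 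After that, your matching of the one-sided limits via $D(z,h)-D(z,-h)=\delta_h^2f(z)/h$ and your Hölder bound go through.
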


\begin{proof}
  For $t>0$ with $[x,x+t]\subset I$, set
  \[
    q_t^+(x)=\frac{f(x+t)-f(x)}t.
  \]
  The hypothesis, applied at the midpoint of $[x,x+t]$, gives
  \begin{equation}\label{eq:quotient-step}
    |q_t^+(x)-q_{t/2}^+(x)|
    =\frac{|\delta_{t/2}^2f(x+t/2)|}{t}
    \leq\frac{M}{2^{1+\alpha}}t^\alpha.
  \end{equation}
  Fix a compact subinterval $J\subset I$ and choose $t_0>0$ small enough such that $q_{t_0}^+$ is defined on $J$.
  Summing \cref{eq:quotient-step} shows
  that $(q_{2^{-m}t_0}^+)_m$ converges uniformly to a continuous function $v$.

  We now show that $v$ is indeed the derivative.
  For any $a<b$ in the interior of $J$ and $t_m=2^{-m}t_0$,
  \begin{align*}
    \int_a^b q_{t_m}^+(x)\,dx
    =\frac1{t_m}\bigg( \int_b^{b+t_m}f(s)\,\mathrm{d} s - \int_a^{a+t_m}f(s)\,\mathrm{d} s \bigg)
    \longrightarrow f(b)-f(a)
  \end{align*}
  by continuity of $f$. Uniform convergence gives
  $f(b)-f(a)=\int_a^b v(x)\,dx$. Since $v$ is continuous, the fundamental
  theorem of calculus implies $f'=v$. Applying this on compact subintervals
  proves $f\in C^1(I)$.

  Now telescope \cref{eq:quotient-step} starting from any admissible $t$.
  As the quotients converge to $f'(x)$, we obtain
  \begin{equation}
    |f'(x)-q_t^+(x)|
    \leq\frac{M}{2^{1+\alpha}}t^\alpha\sum_{m=0}^\infty2^{-m\alpha}
    =\frac{M}{2(2^\alpha-1)}t^\alpha.
  \end{equation}
  The same argument for backward quotients
  $q_t^-(y)=(f(y)-f(y-t))/t$ gives the identical bound. If $x<y$ and $t=y-x$,
  then
  \[
    q_t^+(x)=\frac{f(y)-f(x)}{y-x}=q_t^-(y).
  \]
  Adding the two bounds proves \cref{eq:holder-constant}.
\end{proof}

\begin{theorem}\label{thm:planar-interval}
  Let $0<\alpha\leq1$ and let $f\colon (-1,1)\to\mathbb{R}$ be $L$-Lipschitz, with
  $L<1$. Suppose
  \[
    \wt\ve_f(z,r)\leq C_{\varepsilon} r^\alpha
    \quad\text{for all }z\in(-1,1)\text{ and }r>0\text{ with }[z-r,z+r]\subset(-1,1).
  \]
  Then $f\in C^{1,\alpha}((-1/4,1/4))$, and
  \begin{equation}\label{eq:local-final-bound}
    [f']_{C^{0,\alpha}((-1/4,1/4))}
    \leq \frac{(1+L)(1+L^2)^{(1+\alpha)/2}}{2^\alpha-1}C_{\varepsilon}.
  \end{equation}
\end{theorem}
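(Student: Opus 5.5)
The plan is to combine \cref{thm:epsilon-to-second-difference} and \cref{lem:second-difference} on the subinterval $I=(-1/4,1/4)$. The only issue is admissibility: whenever we use a second difference $\delta_h^2 f(z)$ with $z\pm h\in I$, \cref{thm:epsilon-to-second-difference} needs $[z-r,z+r]\subset(-1,1)$, where $r=|\vphi_f(z+h)-\vphi_f(z)|$.

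First fix $z,h$ with $h>0$ and $z-h,z+h\in(-1/4,1/4)$. Then $|z|<1/4$ and $h<1/4$, and since $f$ is $L$-Lipschitz with $L<1$,
\[
r=\sqrt{h^2+|f(z+h)-f(z)|^2}\le\sqrt{1+L^2}\,h<\sqrt2\,h<\tfrac{\sqrt2}{4}.
\]
Hence $[z-r,z+r]\subset(-1/4-\sqrt2/4,\,1/4+\sqrt2/4)\subset(-1,1)$, because $(1+\sqrt2)/4<1$. By hypothesis $\wt\ve_f(z,r)\le C_\ve r^\alpha$ at this admissible radius, so \cref{thm:epsilon-to-second-difference} gives
\[
|\delta_h^2 f(z)|\le M h^{1+\alpha},\qquad M=(1+L)(1+L^2)^{(1+\alpha)/2}C_\ve .
\]
The case $h<0$ follows from the symmetry $\delta_{-h}^2f=\delta_h^2f$, and $h=0$ is trivial. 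Thus \cref{eq:criterion-hypothesis} holds on $I=(-1/4,1/4)$ with this $M$.

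Since $f$ is Lipschitz, it is continuous, so \cref{lem:second-difference} applies on $I$. It gives $f\in C^1(I)$ and $[f']_{C^{0,\alpha}(I)}\le M/(2^\alpha-1)$, which is exactly \cref{eq:local-final-bound}.

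There is no substantial obstacle. The only point needing care is the admissibility check above, and the margin between $1/4$ and $1$ makes it comfortable.
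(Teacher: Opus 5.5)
Your proposal is correct and follows the paper's proof essentially verbatim. You restrict to $(-1/4,1/4)$, check that $r\le\sqrt{1+L^2}\,h<\sqrt2/4$ keeps $[z-r,z+r]$ inside $(-1,1)$, invoke \cref{thm:epsilon-to-second-difference} for the second-difference bound, and conclude with \cref{lem:second-difference}. Your extra remarks on $h\le 0$ and on continuity of $f$ are harmless details that the paper leaves implicit.
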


\begin{proof}
  Let $J=(-1/4,1/4)$. If $z-h,z+h\in J$ and $h>0$, then $|z|<1/4$ and
  $h<1/4$. Hence,
  \[
    r=|\vphi_f(z+h)-\vphi_f(z)|\leq\sqrt{1+L^2}\,h<\frac{\sqrt2}{4}.
  \]
  In particular, $[z-r,z+r]\subset(-1,1)$.
  \Cref{thm:epsilon-to-second-difference} therefore gives \cref{eq:criterion-hypothesis} on $J$.
  \Cref{lem:second-difference} now proves the claim.
\end{proof}
\vv

\section{The case of a Lipschitz graph with small constant}\label{secLipschitgraph}

\begin{theorem}\label{teolipschitz}
  Let $0<\alpha<1$, let $\Gamma\subset\R^{n+1}$ be the Lipschitz graph through the origin of a function $f:\R^n\to\R$ with slope $L\leq \lambda_0$,  for
  some fixed $0<\lambda_0<1/2$ small enough, and let $\Omega$ be one of the connected components of
  $\R^{n+1}\setminus \Gamma$. Suppose that
  $$\ve_n(x,r) \leq C_\ve\,r^\alpha\quad \mbox{ for all $x\in\Gamma\cap B(0,1)$, $0<r\leq1$.}$$
  Then $f\in C^{1,\alpha}(B(0,1/4))$, with
  $$\|\nabla f\|_{\dot C^\alpha(B(0,1/4))} \lesssim C(\alpha,\lambda_0)\,C_\ve.$$
\end{theorem}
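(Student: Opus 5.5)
Localize and smooth first. Fix a bump $\psi\in C_c^\infty(B(0,1/2))$ with $\psi\equiv1$ on $B(0,1/3)$, and work with $F=\psi\,(f-f(0)-a\cdot x)$ for a suitable affine correction, so that $F$ is globally defined and compactly supported. The goal is the Littlewood--Paley characterization of $\dot C^{1,\alpha}$ for $0<\alpha<1$: if $P_k$ denote Littlewood--Paley projections at frequency $2^k$, then $\|\nabla F\|_{\dot C^\alpha}\approx\sup_k 2^{k(1+\alpha)}\|P_kF\|_\infty$. The task is therefore to prove $|P_kF(z)|\lesssim C_\ve 2^{-k(1+\alpha)}$ for $z\in B(0,1/4)$ and $k\ge0$. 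Low frequencies are trivial since $F$ is Lipschitz with compact support. Contributions from the region where $\psi\neq1$ are harmless at the points of interest, because the kernel of $P_k$ decays rapidly away from $z$. Alternatively one can split into $P_k(\psi f)$ and estimate commutator terms $[P_k,\psi]f$ by $2^{-k}\cdot 2^{-k}\|\nabla^2\psi\|$, using that $f$ is Lipschitz.

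The core step converts the hypothesis into a radial second-order quantity. For $x_0\in B(0,1/2)$ and $r\le 1/4$, write $\wt\ve_f(x_0,r)$ in graph coordinates. Since $L\le\lambda_0$ is small, the sphere $S(\varphi_f(x_0),r)$ meets $\Gamma$ in a Lipschitz graph over the equator. Then $\HH^n(\Omega^+_f\cap S)-\tfrac12\HH^n(S)$ equals, up to errors quadratic in the slope, $c\,r^{n-1}\int_{\bS^{n-1}}\big(f(x_0+r\theta)-f(x_0)\big)\,d\sigma(\theta)$ (with $r$ suitably adjusted). The symmetric part of this average is $\frac12\int\delta^2_{r\theta}f(x_0)\,d\sigma$, which is the spherical-mean second difference. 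Precisely, I would show
\[
\Big|\fint_{\bS^{n-1}} \big(f(x_0+r\theta)-f(x_0)\big)\,d\sigma(\theta)\Big|\lesssim r\,\wt\ve_f(x_0,r)+ \text{(error)}\lesssim C_\ve r^{1+\alpha}+\text{(error)},
\]
in analogy with \cref{thm:epsilon-to-second-difference}. Here the sphere radius $r$ versus the horizontal radius creates a distortion of size $|\nabla f|^2$.

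This distortion is the main obstacle. The error is not quadratically small in absolute terms; it looks like $r\cdot O(L^2)$, which is not $O(r^{1+\alpha})$. To handle it, first subtract the best affine approximation at scale $r$. The $\ve$ coefficient is invariant under rotations, and for the affine part the sphere average is exactly symmetric. The remaining error then involves $\mathfrak b_f(x_0,r)$ multiplied by the oscillation of $\nabla f$, giving the bound
\[
\Big|\fint_{\bS^{n-1}}(f(x_0+r\theta)-f(x_0))\,d\sigma\Big|\lesssim C_\ve r^{1+\alpha}+C\lambda_0\, r\,\mathfrak b_f(x_0,2r).
\]
Since $\sup_{x_0}\mathfrak b_f(x_0,r)r^{-\alpha}\lesssim \sup_k 2^{k(1+\alpha)}\|P_kF\|_\infty=:A$, which in turn is controlled by the spherical-mean quantities, the small factor $\lambda_0$ lets the error be absorbed. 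This is where $\lambda_0$ small is used.

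Finally, express $P_k$ through spherical means. Choose a radial Schwartz function $\phi$ whose Fourier transform vanishes to second order at $0$, so that $\int\phi=\int x\phi=0$. Then
\[
\phi_{2^{-k}}*F(z)=\int_0^\infty \Phi(r2^{k})\,2^{k}\fint_{\bS^{n-1}}\big(F(z+r\theta)-F(z)\big)\,d\sigma\,\frac{dr}{r}\cdot r\, (\ldots),
\]
which is bounded by $C_\ve 2^{-k(1+\alpha)}+C\lambda_0 A\,2^{-k(1+\alpha)}$. Using the Calderón reproducing formula with this family, we get $A\lesssim C_\ve+C\lambda_0A$. The quantity $A$ is a priori finite, since $F$ is Lipschitz: this gives $A\lesssim 2^{k\alpha}$ only, so I would run the absorption with a truncation $k\le K$, or use the qualitative bound from Casey-type or \cref{lem:second-difference}-type arguments. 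Absorbing then gives $A\lesssim C(\alpha,\lambda_0)C_\ve$, and hence $\|\nabla f\|_{\dot C^\alpha(B(0,1/4))}\lesssim C(\alpha,\lambda_0) C_\ve$.
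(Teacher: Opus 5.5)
Your overall architecture (Littlewood--Paley characterization, an error of size $\lambda_0\cdot\mathfrak b$, absorption using small $\lambda_0$) matches the paper's. The gap is the key single-radius step, which is false: the inequality
\[
\Big|\fint_{\bS^{n-1}}\big(f(x_0+r\theta)-f(x_0)\big)\,d\sigma(\theta)\Big|\lesssim r\,\wt\ve_f(x_0,r)+C\lambda_0\, r\,\mathfrak b_f(x_0,2r)
\]
cannot hold. Subtracting the best affine approximation $a$ does not remove the radius distortion; it only moves it. The sphere $S(\vphi_f(x_0),r)$ meets the graph of $a$ in a great sphere whose horizontal projection is the ellipsoid $E=\{y:|y-x_0|^2+(\nabla a\cdot(y-x_0))^2=r^2\}$. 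In direction $\theta$, $E$ lies at distance $\approx r(\nabla a\cdot\theta)^2/2\sim L^2 r$ inside the circle $|y-x_0|=r$ on which your mean is taken. The coefficient at radius $r$ sees $g=f-a$ near $E$, while your mean sees $g$ on the circle. The two can differ by $\mathrm{Lip}(g)\cdot L^2 r\sim L^3 r$, which is not controlled by $\lambda_0 r\,\mathfrak b_f(x_0,2r)$.

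Concretely, take $|\nabla a|=L/2$ and let $g$ be $L/2$-Lipschitz, vanish near $E$, and equal $-cL^3 r$ on the part of the circle where $|\nabla a\cdot\theta|\geq L/4$. Since the slope is $<1$, each meridian of the sphere crosses the graph of $f=a+g$ exactly once, and at the same point as for $a$. Hence $\Omega_f^+\cap S=\Omega_a^+\cap S$ is a hemisphere and $\ve_n(\vphi_f(x_0),r)=0$. Yet your spherical mean is $\approx -c'L^3 r$, while $\mathfrak b_f(x_0,2r)\lesssim cL^3$. So the error is comparable to $r\,\mathfrak b$ with no small factor, and the absorption never starts. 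Rotating so that $a$ is horizontal does not help: the frame would depend on $(x_0,r)$ while $P_k$ lives in a fixed frame, and the same ellipsoid/circle discrepancy reappears. The planar device of adjusting $r$ (\cref{thm:epsilon-to-second-difference}) has no analogue for $n\geq 2$, because the horizontal radius of $\Gamma\cap S$ varies with $\theta$.

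The missing idea is to compare the two geometries only after integrating in the radius against a smooth profile. The paper takes $\rho(y)=\psi(|y|)$ and $\eta(y)=\psi(|y_0|)\mathbf 1_K(y)$, with $K$ a double cone of aperture $\sim L$. Integrating the hypothesis gives $a_{\rho,\Omega}(x,r)\lesssim r^\alpha$ (\cref{thm:a-eta-decay}), while $|\xi_r*f(x_0)|=r\,a_{\eta,\Omega}(\vphi_f(x_0),r)$ exactly (\cref{thm:LP-equals-a-rho}). The difference $a_\eta-a_\rho$ is a solid integral over $(\Omega_f^+\Delta H_x^+)\cap K_x$. There the distortion $\bigl||y_0-x_0|-|y-x|\bigr|\lesssim L^2|y-x|$ falls on the smooth $\psi$ rather than on $g$. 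This gives the error $L^2\sum_{h\geq1}2^{-Nh}\mathfrak b_f(x_0,2^h r)$ (\cref{thm:difference-of-a-coeff}), which is multiscale but absorbable; in the example above, the thin bump is seen by both weights at nearby radii.

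There are two smaller issues. First, a plain truncation at $k\leq K$ does not close. The scales $j>K$ feed an error of order $\lambda_0 2^{\alpha K}$ back into $\sup_{k\leq K}2^{k(1+\alpha)}\|P_kF\|_\infty$, and Casey's theorem and \cref{lem:second-difference} are one-dimensional. The paper instead absorbs in the weighted sums $\sum_m 2^{-\gamma|j-m|}A_m$, with $A_m=2^m\|P_m u\|_\infty\lesssim\|u\|_{\mathrm{Lip}}$ and $\alpha<\gamma<1$; these are finite a priori (\cref{thm:Rn-unit-ball}). Second, both $\sup_{x_0}\mathfrak b$ and the characterization are global, so you need bounds for $P_kF$ on all of $\R^n$, not only on $B(0,1/4)$. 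The paper obtains them by Taylor-expanding the cutoff at $x_0$ (\cref{thm:Aj-upper-bound-bk}).
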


The strategy of the proof is to relate the geometric coefficients to the Littlewood–Paley decomposition of the function.
In \cref{sec:calculations-on-Rn}, we first consider functions defined on $\R^n$ and establish the basic comparison between these quantities.
In \cref{sec:restricted-g}, we introduce the localized function and extend it to $\R^n$ by multiplying it by a smooth cutoff.
\Cref{sec:LP-bound-by-beta} then establishes the decay of its Littlewood–Paley pieces up to an error involving the graph $\beta$-numbers, multiplied by the square of the Lipschitz constant.
Conversely, \cref{sec Littlewood-Paley} controls the graph $\beta$-numbers in terms of the Littlewood–Paley pieces.
Finally, in \cref{sec:absorption}, we combine these estimates and use the smallness of the Lipschitz constant to absorb the error and obtain the decay required for \(C^{1,\alpha}\) regularity.

\subsection{Calculations for functions on \texorpdfstring{$\mathbb{R}^n$}{ℝⁿ}}\label{sec:calculations-on-Rn}
We will focus on the case where $\partial \Omega^+$ is given by the graph of a function.
Let $g\colon\mathbb{R}^n\to\mathbb{R}$ and suppose that it satisfies a Lipschitz condition at a point with constant $L$.
The estimates below will work pointwise at that fixed center and with varying radii $r>0$.

We define the Littlewood--Paley operators as in \cite[Section 1.4.2]{grafakosModernFourierAnalysis2014}.
Let $\xi\in\mathcal{S}(\mathbb{R}^n)$ be a radial Schwartz function with
$\mathbf{1}_{A(0,1,2-\frac{2}{7})} \le \widehat \xi \le \mathbf{1}_{A(0, 1-\frac{1}{7}, 2)}$ and
\begin{align*}
  \sum_{j\in\mathbb{Z}} \widehat{\xi}(2^{-j} \zeta) = 1,
\end{align*}
for all $\zeta\neq 0$.
We define
$P_j f := \xi_{2^{-j}} \ast f$, where $\xi_t(x_0) := \frac{1}{t^n} \xi(\frac{x_0}{t})$.
We also define the Schwartz function $\varphi$ by setting
\begin{align*}
  \widehat \varphi(\zeta)
  = \begin{cases}
      \sum_{j\le 0} \widehat{\xi}(2^{-j}\zeta) & \text{if } \zeta\neq 0, \\
      1                                        & \text{if } \zeta=0.
    \end{cases}
\end{align*}
Additionally, $S_j f := \varphi_{2^{-j}} \ast f$.
Note that
\begin{align*}
  \int \varphi \,\mathrm{d}x_0 = 1, \quad
  P_j = S_j - S_{j-1} \quad\text{and}\quad
  \int (x_0)^{\nu} \xi(x_0) \,\mathrm{d}x_0 =0,
\end{align*}
for every multi-index $\nu$.

Furthermore, let $\psi\colon\mathbb{R}\to\mathbb{R}$ be the even radial profile of $\xi$, i.e. $\psi(t) := \xi(t e_1)$.
Define $\eta,\rho\colon\mathbb{R}^{n+1}\to\mathbb{R}$,
\begin{align*}
  \eta(x) & := \psi(|x_0|) \mathbf{1}_{K}(x), \\
  \rho(x) & := \psi(|x|),
\end{align*}
where $K$ is a double-sided cone with aperture approximately $L$, i.e.
\begin{align*}
  K := \{ x\in\mathbb{R}^{n+1} : |x_{n+1}| \le C_K L |x_0| \},
\end{align*}
where $C_K>1$ will be chosen later.

We define integrated versions of the Carleson coefficient to be
\begin{align*}
  a_{\eta,\Omega^+}(x,r) & := \frac{1}{r^{n+1}} \left|  \int_{\Omega^+} \eta\Bigl(\frac{y-x}{r}\Bigr) \,\mathrm{d}\mathcal{H}^{n+1}(y) -  \int_{H_x^+} \eta\Bigl(\frac{y-x}{r}\Bigr) \,\mathrm{d}\mathcal{H}^{n+1}(y) \right|, \\
  a_{\rho,\Omega^+}(x,r) & := \frac{1}{r^{n+1}} \left|  \int_{\Omega^+} \rho\Bigl(\frac{y-x}{r}\Bigr) \,\mathrm{d}\mathcal{H}^{n+1}(y) -  \int_{H_x^+} \rho\Bigl(\frac{y-x}{r}\Bigr) \,\mathrm{d}\mathcal{H}^{n+1}(y) \right|,
\end{align*}
where $H_x^+$ is a half-space associated to some arbitrary hyperplane $H_x$ passing through $x$.
The definition is independent of the choice of $H_x$ since $\eta$ and $\rho$ are integrable and even.
We will only choose hyperplanes with slope bounded by $C_KL$, so that they are contained in the cone.

\begin{lemma}\label{thm:a-eta-decay}
  Let $x_0\in\R^n$ and $x=\varphi_g(x_0)$.
  If $\wt\ve_g(x_0,t) \lesssim t^\alpha$ for all $t>0$, then
  \begin{align*}
    a_{\rho,\Omega_g^+}(x,r) \lesssim cr^\alpha.
  \end{align*}
\end{lemma}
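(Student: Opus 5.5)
The plan is to write $a_{\rho,\Omega_g^+}(x,r)$ in polar coordinates centered at $x$ and reduce it to an integral of the spherical quantities $\wt\ve_g(x_0,t)$ over $t$. Since $\rho(y)=\psi(|y|)$ is radial, Fubini in spherical shells gives
\begin{align*}
  \int_{\Omega_g^+}\rho\Bigl(\frac{y-x}{r}\Bigr)\,\dd\HH^{n+1}(y)
  = \int_0^\infty \psi(t/r)\,\HH^n\bigl(\Omega_g^+\cap S(x,t)\bigr)\,\dd t .
\end{align*}
The same identity holds with $\Omega_g^+$ replaced by any half-space $H_x^+$ with $x\in\partial H_x^+$. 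In that case $\HH^n(H_x^+\cap S(x,t))=\frac12\HH^n(S(x,t))$ for every $t$.

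Subtracting the two identities, the difference inside the absolute value in the definition of $a_{\rho,\Omega_g^+}$ equals
\begin{align*}
  \int_0^\infty \psi(t/r)\Bigl(\HH^n(\Omega_g^+\cap S(x,t))-\tfrac12\HH^n(S(x,t))\Bigr)\,\dd t .
\end{align*}
Its absolute value is at most $\int_0^\infty|\psi(t/r)|\,t^n\,\wt\ve_g(x_0,t)\,\dd t$, because $\wt\ve_g$ is defined with $D=\R^n$ and so makes sense for every $t>0$.

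Next I use the hypothesis $\wt\ve_g(x_0,t)\lesssim t^\alpha$ and divide by $r^{n+1}$. This gives
\begin{align*}
  a_{\rho,\Omega_g^+}(x,r)\lesssim \frac1{r^{n+1}}\int_0^\infty|\psi(t/r)|\,t^{n+\alpha}\,\dd t
  = r^\alpha\int_0^\infty|\psi(s)|\,s^{n+\alpha}\,\dd s ,
\end{align*}
after the substitution $t=rs$. The last integral is finite, depending only on $n$ and $\alpha$, because $\psi$ is the radial profile of the Schwartz function $\xi$ and therefore decays faster than any power. This yields the claim with $c$ the implicit constant times $\int_0^\infty|\psi(s)|s^{n+\alpha}\,\dd s$.

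There is no serious obstacle here. The only point to check is that the hypothesis is needed for all $t>0$, not just $t\le r$, since $\psi$ is not compactly supported. This is exactly why the lemma assumes decay for every $t>0$. In later applications, when $g$ is a cutoff extension and the decay holds only for bounded $t$, the tail $t\gtrsim1$ will be controlled by the trivial bound $\wt\ve_g\le C$ together with the rapid decay of $\psi$. In the present statement, however, the global assumption makes the argument a direct computation.
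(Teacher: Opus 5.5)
Your proposal is correct and follows the paper's proof essentially step by step: polar coordinates around $x$ using that $\rho$ is radial, the half-space contributing exactly $\tfrac12\HH^n(S(x,t))$, the bound by $t^n\,\wt\ve_g(x_0,t)$, and the substitution $t=rs$ with the rapid decay of $\psi$. Your closing remark about handling decay only for bounded $t$ corresponds to what the paper does in \cref{thm:lipschitz-at-x0-suffices}.
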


\begin{proof}
  Since $\rho$ is radial, we have
  \begin{align*}
    a_{\rho,\Omega_g^+}(x,r)
     & = \frac{1}{r^{n+1}} \left|  \int_0^{\infty} \psi\Big(\frac{t}{r}\Big) \mathcal{H}^{n}(\Omega_g^+\cap S(x,t)) \,\mathrm{d} t -  \int_0^{\infty} \psi\Big(\frac{t}{r}\Big) \mathcal{H}^{n}(H_x^+\cap S(x,t)) \,\mathrm{d} t  \right| \\
     & \le \frac{1}{r^{n+1}} \int_0^{\infty} \left|  \psi\Big(\frac{t}{r}\Big) \right| \left| \mathcal{H}^{n}(\Omega_g^+\cap S(x,t)) - \frac{1}{2}\mathcal{H}^{n}(S(x,t)) \right| \,\mathrm{d} t                                          \\
     & \leq \frac{1}{r^{n+1}} \int_0^{\infty} \left|  \psi\Big(\frac{t}{r}\Big) \right| \wt\varepsilon_g(x_0,t) t^n \,\mathrm{d} t                                                                                                        \\
     & \lesssim \frac{1}{r^{n+1}} \int_0^{\infty} \left|  \psi\Big(\frac{t}{r}\Big) \right| t^{n+\alpha} \,\mathrm{d} t
    = r^\alpha \int_{0}^{\infty} \left| \psi(s) \right| s^{n+\alpha} \,\mathrm{d} s                                                                                                                = cr^{\alpha}.
  \end{align*}
\end{proof}

\begin{lemma}\label{thm:LP-equals-a-rho}
  Let $g\colon\mathbb{R}^n\to\mathbb{R}$ satisfy a Lipschitz condition at $x_0$.
  Then,
  \begin{align*}
    |\xi_r \ast g (x_0)| = r \,a_{\eta,\Omega_g^+}(\varphi_g(x_0), r).
  \end{align*}
\end{lemma}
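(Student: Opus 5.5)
The plan is to compute the integral defining $a_{\eta,\Omega_g^+}$ directly by Fubini in the vertical variable. Write $x=\varphi_g(x_0)=(x_0,g(x_0))$ and $y=(y_0,t)$. By definition $\eta(z)=\psi(|z_0|)\mathbf{1}_K(z)$, and on the cone $K$ the vertical coordinate is constrained to $|z_{n+1}|\le C_KL|z_0|$. Since $\eta$ and $\rho$ are even and integrable, the definition is independent of $H_x$. I will take $H_x$ to be the horizontal hyperplane through $x$, so that $H_x^+=\{t>g(x_0)\}$.

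First, for fixed $y_0$ consider the vertical segment
\[
I(y_0)=\bigl\{t:\ |t-g(x_0)|\le C_KL\,|y_0-x_0|\bigr\}.
\]
The Lipschitz condition at $x_0$ gives $|g(y_0)-g(x_0)|\le L|y_0-x_0|$. Since $C_K>1$, the point $g(y_0)$ therefore lies inside $I(y_0)$. Along this segment, $\Omega_g^+$ corresponds to $t>g(y_0)$ and $H_x^+$ corresponds to $t>g(x_0)$. Hence the difference of the indicator functions, integrated in $t$ over $I(y_0)$, equals exactly $g(x_0)-g(y_0)$; no truncation by the cone occurs. Outside the cone $\eta$ vanishes, so nothing else contributes. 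Fubini then gives
\[
\int_{\Omega_g^+}\eta\Bigl(\frac{y-x}{r}\Bigr)\,dy-\int_{H_x^+}\eta\Bigl(\frac{y-x}{r}\Bigr)\,dy=\int_{\R^n}\psi\Bigl(\frac{|y_0-x_0|}{r}\Bigr)\bigl(g(x_0)-g(y_0)\bigr)\,dy_0 .
\]
Absolute integrability is justified because $|g(y_0)-g(x_0)|\lesssim|y_0-x_0|$ and $\psi$ is Schwartz.

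Second, observe that $\psi(|y_0-x_0|/r)=\xi((x_0-y_0)/r)=r^n\xi_r(x_0-y_0)$, since $\xi$ is radial with profile $\psi$. Moreover, $\int\xi=\widehat\xi(0)=0$, so the constant term $g(x_0)$ integrates to zero. The right-hand side is therefore $-r^n\,\xi_r\ast g(x_0)$. Dividing by $r^{n+1}$ and taking absolute values yields
\[
a_{\eta,\Omega_g^+}(\varphi_g(x_0),r)=r^{-1}\,|\xi_r\ast g(x_0)|,
\]
which is the claim.

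The only delicate point is the first step: one must check that $g(y_0)$ lies inside the cone section $I(y_0)$, which is exactly why $C_K>1$ is needed. One must also make sure that the choice of $H_x$ lies within the cone, which is automatic for the horizontal plane. Everything else is bookkeeping.
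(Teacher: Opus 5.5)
Your proposal is correct and follows essentially the same route as the paper. You choose the horizontal half-space $\{y_{n+1}>g(x_0)\}$ and integrate along the vertical fibres of the cone $x+K$, where the Lipschitz condition at $x_0$ with $C_K>1$ keeps the graph point inside each fibre; this gives $\int_{\R^n}\psi(|y_0-x_0|/r)\,(g(x_0)-g(y_0))\,dy_0$, and the constant term vanishes because $\int\xi=0$. You also make explicit the fibre-containment and integrability checks that the paper leaves implicit, which is a welcome addition.
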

\begin{proof}
  Set $x=\varphi_g(x_0)$ and $K_x=x+K$. By the
  independence of the half-space in the definition of $a_{\eta,\Omega_g^+}$, choose $H_x=\{y:y_{n+1}=g(x_0)\}$ and
  its upper half-space.
  We calculate
  \begin{align*}
    r^{n+1} a_{\eta,\Omega_g^+}(x,r)
     & = \left|  \int_{\Omega_g^+} \eta\Big(\frac{y-x}{r}\Big) \,\mathrm{d}\mathcal{H}^{n+1}(y) -  \int_{H_x^+} \eta\Big(\frac{y-x}{r}\Big) \,\mathrm{d}\mathcal{H}^{n+1}(y) \right|                                                     \\
     & = \left|  \int_{\mathbb{R}^n} \psi\bigg(\frac{|y_0-x_0|}{r}\bigg) \left( \int_{g(y_0)}^{F_{K_x}(y_0)} \,\mathrm{d}y_{n+1}  - \int_{g(x_0)}^{F_{K_x}(y_0)} \,\mathrm{d}y_{n+1} \right) \,\mathrm{d}\mathcal{H}^{n}(y_0)  \right|   \\
     & =  \left|  \int_{\mathbb{R}^n} \xi\bigg(\frac{y_0-x_0}{r}\bigg) ( g(x_0) - g(y_0)) \,\mathrm{d}\mathcal{H}^{n}(y_0)  \right|                                                                                                      \\
     & =  \left|  \int_{\mathbb{R}^n} \xi\bigg(\frac{y_0-x_0}{r}\bigg) g(y_0) \,\mathrm{d}\mathcal{H}^{n}(y_0)  \right|                                                                                        = r^{n} |\xi_r * g(x_0)|,
  \end{align*}
  where $F_{K_x}$ denotes the function corresponding to the cone $x+K$.
  We used that $\int \xi \,\mathrm{d}x_0 = 0$.
\end{proof}

\begin{lemma}\label{thm:difference-of-a-coeff}
  Let $g\colon\mathbb{R}^n\to\mathbb{R}$ satisfy a Lipschitz condition at $x_0$.
  For any $N\in\mathbb{N}$ large enough, we have
  \begin{align*}
    \left|  a_{\eta,\Omega_g^+}(\varphi_g(x_0),r) - a_{\rho,\Omega_g^+}(\varphi_g(x_0),r)  \right|
    \lesssim_N L^2 \sum_{j\ge 1} \mathfrak b_g(x_0,2^jr) 2^{-Nj}.
  \end{align*}
\end{lemma}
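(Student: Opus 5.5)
We reduce to a single integral of $\rho-\eta$ over $\Omega_g^+$, split it into dyadic annuli in the horizontal variable, and on each annulus compare $\Omega_g^+$ with the half-space above a locally best-fitting affine function. Normalize $x_0=0$ and $g(0)=0$, so that $x=\varphi_g(x_0)=0$ and $|g(y_0)|\le L|y_0|$ for all $y_0\in\R^n$.

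\emph{Step 1: a formula with the half-spaces removed.} For any hyperplane $H$ through $0$, evenness of $\rho$ gives
\[
\int_{H^+}\rho(y/r)\,dy=\tfrac12\int\rho(y/r)\,dy .
\]
The same identity holds for $\eta$, provided $H$ has slope at most $C_KL$, so that $H\subset K$ and evenness of $\eta$ applies. Hence
\[
r^{n+1}\,\bigl|a_{\eta}-a_{\rho}\bigr|\le \Bigl|\int_{\Omega_g^+}(\rho-\eta)(y/r)\,dy-\tfrac12\int(\rho-\eta)(y/r)\,dy\Bigr|=:|D|.
\]

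\emph{Step 2: dyadic localization.} Fix a smooth radial partition of unity $1=\sum_{j\ge1}\chi_j(y_0)$ on $\R^n$. Take $\chi_1$ supported in $B(0,4r)$ and, for $j\ge2$, $\chi_j$ supported in $\{2^{j-1}r\le|y_0|\le2^{j+1}r\}$. Each function $(\rho-\eta)(y/r)\chi_j(y_0)$ is still even in $y$. So
\[
D=\sum_j D_j,\qquad D_j=\int_{\Omega_g^+}(\rho-\eta)(y/r)\chi_j(y_0)\,dy-\int_{H_j^+}(\rho-\eta)(y/r)\chi_j(y_0)\,dy,
\]
where the hyperplane $H_j$ may be chosen differently for each $j$. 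The key point is that Step 1 frees us from using a single plane for all scales.

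For each $j$, let $a_j$ be an affine function nearly minimizing $\|g-a_j\|_{L^\infty(B(0,2^{j+1}r))}$. Replace it by $a_j-a_j(0)$; since $g(0)=0$, this at most doubles the error, giving
\[
|g-a_j|\le 2^{j+2}r\,\mathfrak b_g(0,2^{j+1}r)\quad\text{on } B(0,2^{j+1}r).
\]
Since $g$ is $L$-Lipschitz at $0$ and $a_j(0)=0$, comparing $a_j$ with $g$ on the ball shows $|\nabla a_j|\lesssim L$. Fixing $C_K$ large, $H_j=\{y_{n+1}=a_j(y_0)\}$ then lies in $K$. I expect this slope control of the approximating planes to be the main technical point to verify.

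\emph{Step 3: estimating each piece.} $D_j$ is an integral over the region between the graphs of $g$ and $a_j$ above $\supp\chi_j$. That region lies in $K$, since $|g|,|a_j|\le C_KL|y_0|$ there. On $K$ we have $\eta(y)=\psi(|y_0|)$, while
\[
0\le |y|-|y_0|\le \frac{y_{n+1}^2}{2|y_0|}\lesssim L^2|y_0| .
\]
Since $\psi$ is Schwartz, for $|y_0|\approx 2^jr$ this gives
\[
\bigl|\psi(|y|/r)-\psi(|y_0|/r)\bigr|\lesssim_N L^2\,\frac{|y_0|}{r}\Bigl(1+\frac{|y_0|}{r}\Bigr)^{-N-2}\lesssim L^2\,2^{-j(N+1)}.
\]
The vertical thickness of the region is $|g-a_j|\lesssim 2^jr\,\mathfrak b_g(0,2^{j+1}r)$, and the horizontal volume is $\lesssim(2^jr)^n$. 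Therefore
\[
|D_j|\lesssim_N L^2\,2^{-j(N-n)}\,r^{n+1}\,\mathfrak b_g(0,2^{j+1}r).
\]

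Summing over $j$, dividing by $r^{n+1}$, and relabelling $N$ (using that $\mathfrak b_g(0,2^{j+1}r)$ is a term of the sum, up to an index shift absorbed into the constant) yields the claimed bound.
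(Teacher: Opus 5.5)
Your proof is correct, but it is organized differently from the paper's.

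The paper works with a single hyperplane $H_x$: the graph of $\ell_0=\ell_1-\ell_1(x_0)+g(x_0)$, where $\ell_1$ is the best approximant at scale $2r$. After removing the upper cone $K_x^+$ from both $\rho$-integrals, it bounds $|a_\eta-a_\rho|$ by $r^{-n-1}\int_{(\Omega_g^+\Delta H_x^+)\cap K_x}|\psi(|y_0-x_0|/r)-\psi(|y-x|/r)|\,dy$. It then has to control $|g-\ell_0|$ on the large balls $B(x_0,2^{k+1}r)$ with this one fixed plane. It does so by telescoping the best approximants $\ell_j$ across scales, so the $k$-th annulus costs $2^{k}r\sum_{j\le k+1}\mathfrak b_g(x_0,2^jr)$. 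The resulting double sum is then reorganized using the rapid decay of $\psi'$.

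Your key observation is different: the kernel stays even after multiplication by a radial cutoff $\chi_j(y_0)$, so the half-space term can be computed with a different hyperplane on each annulus. Choosing the best approximant at that annulus's own scale, each piece costs only $\mathfrak b_g(0,2^{j+1}r)$. No telescoping and no interchange of sums is needed.

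The price is that you need slope control for all the $a_j$ simultaneously. You get this correctly from $\mathfrak b_g(0,R)\le L$, which gives $|\nabla a_j|\le 3L$ up to the near-minimizer slack, so a fixed $C_K$ of about $3$ suffices (the paper needs about $2$). Both routes give the same final bound after relabelling $N$.

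One minor point: the proviso in your Step 1 that $H\subset K$ is unnecessary. Since $\eta$ is even and $H^-=-H^+$ for every hyperplane $H$ through the origin, the identity holds for any such $H$. Containment in $K$ is only needed in Step 3, to ensure that the region between the graphs of $g$ and $a_j$ lies where $\eta(y/r)=\psi(|y_0|/r)$.
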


\begin{proof}
  We first rewrite $a_\rho(x,r)$:
  Let $K_x^+ := \{ y\in\mathbb{R}^{n+1} : y_{n+1} - x_{n+1} > C_K L |y_0 - x_0| \}$ be the {\em upper opening} of the cone $K_x$.
  If we choose $H_x$ to be contained in the cone $x+K$, we have $K_x^+ \subset \Omega_g^+ \cap H_x^+$, and thus for $x\in \partial \Omega_g^+$ and $r>0$,
  \begin{align*}
    a_{\rho,\Omega_g^+}(x,r) &
    = \frac{1}{r^{n+1}} \bigg|  \left( \int_{\Omega_g^+} \rho\Big(\frac{y-x}{r}\Big) \,\mathrm{d} y - \!\int_{K_x^+} \rho\Big(\frac{y-x}{r}\Big) \,\mathrm{d} y \right) \\
                             & \qquad \quad
    - \left( \int_{H_x^+} \rho\Big(\frac{y-x}{r}\Big) \,\mathrm{d} y  -\! \int_{K_x^+} \rho\Big(\frac{y-x}{r}\Big) \,\mathrm{d} y \right)
    \bigg|                                                                                                                                                              \\
                             & = \frac{1}{r^{n+1}} \left|  \int_{\Omega_g^+\cap K_x} \rho(\frac{y-x}{r}) \,\mathrm{d} y
    - \int_{H_x^+\cap K_x} \rho(\frac{y-x}{r}) \,\mathrm{d} y
    \right|.
  \end{align*}
  We use this to calculate
  \begin{align*}
     & \left|  a_\eta(x,r) - a_\rho(x,r)  \right|                                                                          \\
     & \le \frac{1}{r^{n+1}} \bigg|
    \left( \int_{\Omega_g^+\cap K_x} \psi\bigg(\frac{|y_0-x_0|}{r}\bigg) \,\mathrm{d} y
    - \int_{H_x^+\cap K_x} \psi\bigg(\frac{|y_0-x_0|}{r}\bigg) \,\mathrm{d} y
    \right)                                                                                                                \\
     & \qquad\qquad\qquad -
    \left( \int_{\Omega_g^+\cap K_x} \psi\bigg(\frac{|y-x|}{r}\bigg) \,\mathrm{d} y
    - \int_{H_x^+\cap K_x} \psi\bigg(\frac{|y-x|}{r}\bigg) \,\mathrm{d} y
    \right)
    \bigg|                                                                                                                 \\
     & \le \frac{1}{r^{n+1}} \int_{(\Omega_g^+ \Delta H_x^+) \cap K_x}
    \left|  \psi\bigg(\frac{|y_0-x_0|}{r}\bigg) - \psi\bigg(\frac{|y-x|}{r}\bigg) \right| \,\mathrm{d}\mathcal{H}^{n+1}(y) \\
     & \le \frac{1}{r^{n+1}} \sum_{k\ge 0}
    \int_{A_{k,x,r}\cap (\Omega_g^+ \Delta H_x^+) \cap K_x} \lVert \psi' \rVert_{L^{\infty}((2^k - 1, c2^{k+1}))}
    \left|  \frac{|y_0-x_0| - |y-x| }{r}  \right|  \,\mathrm{d}\mathcal{H}^{n+1}(y)                                        \\
  \end{align*}
  where $A_{0,x,r} := B(x_0, 2r)\times \mathbb{R}$ and
  \begin{align*}
    A_{k,x,r} := \{ y \in \mathbb{R}^{n+1} : |y_0 - x_0| \in (2^kr, 2^{k+1}r) \} ,\qquad\forall k\ge 1.
  \end{align*}
  When applying the mean value theorem, note that $y\in A_{k,x,r}$ implies that $|y_0-x_0|/r \in (2^k - 1,2^{k+1})$.
  Since $y\in K_x$, we also have $|y-x|/r \lesssim |y_0-x_0|/r \lesssim 2^{k+1}$,
  and
  \begin{align*}
    \left|  |y_0-x_0| - |y-x|  \right| \le |1-\cos(\arctan C_K L)| |y-x| \lesssim 2^k L^2 r.
  \end{align*}
  Since $\psi$ is a Schwartz function, we have $\lVert \psi' \rVert_{L^{\infty}((2^k - 1, c2^{k+1}))} \le c(M) 2^{-Mk}$ for any $M\in \mathbb{N}$.

  We use affine approximating functions on consecutive dyadic scales to estimate the Hausdorff measure.
  For $j\ge 1$, let $\ell_j$ denote the affine function minimizing $\mathfrak b_g(x_0, 2^jr)$.
  Let $\ell_0 := \ell_1 - \ell_1(x_0) +g(x_0)$ and $H_x$ be the hyperplane associated to it.
  The Lipschitz condition of $g$ at $x_0$ implies that $|\nabla \ell_1| \le 2L$, and indeed $H_x$ is contained in the cone $x+K$:
  \begin{align*}
    2r|\nabla \ell_1|
    \le \lVert \ell_1 - g(x_0) \rVert_{L^{\infty}(B(x_0,2r))}
    \le \lVert \ell_1 - g \rVert_{L^{\infty}(B(x_0,2r))} + \lVert g - g(x_0) \rVert_{L^{\infty}(B(x_0,2r))}
    \le 4Lr.
  \end{align*}
  We have $\lVert \ell_1 - \ell_0 \rVert_{L^{\infty}(B(x_0,2r))} = |\ell_1(x_0) - g(x_0)| \le 2r \mathfrak b_g(x_0,2r)$.
  For $k+1\ge j\ge 2$,
  \begin{align*}
    \lVert \ell_j - \ell_{j-1} \rVert_{L^{\infty}(B(x_0,2^{k+1}r))}
     & \le \frac{2^{k+1}r}{2^{j-1}r}\lVert \ell_j - \ell_{j-1} \rVert_{L^{\infty}(B(x_0,2^{j-1}r))} \\
     & \le 2^{k+1}r ( \mathfrak b_g(x_0,2^{j-1}r) + 2 \mathfrak b_g(x_0,2^{j}r)).
  \end{align*}
  We have
  \begin{align*}
    \mathcal{H}^{n+1}(A_{k,x,r}\cap (\Omega_g^+ \Delta H_x^+))
     & \le \int_{B(x_0, 2^{k+1}r)} |g - \ell_0| \,\mathrm{d}\mathcal{H}^{n}                                                       \\
     & \lesssim (2^{k+1}r)^n \Big\| g - \ell_{k+1} + \sum_{j=1}^{k+1}  (\ell_j - \ell_{j-1}) \Big\|_{L^{\infty}(B(x_0,2^{k+1}r))} \\
     & \lesssim (2^{k+1}r)^{n+1} \sum_{j=1}^{k+1} \mathfrak b_g(x_0,2^jr).
  \end{align*}

  Combining all estimates yields
  \begin{align*}
    \left|  a_{\eta,\Omega_g^+}(x,r) - a_{\rho,\Omega_g^+}(x,r)  \right|
     & \lesssim \frac{L^2}{r^{n+1}} \sum_{k\ge 0} 2^{(1-M)k} \sum_{j=1}^{k+1} \mathfrak b_g(x_0,2^jr) (2^kr)^{n+1} \\
     & \lesssim L^2 \sum_{j\ge 1} \mathfrak b_g(x_0,2^jr) \sum_{k\ge j-1} 2^{(n+2-M)k}                             \\
     & \lesssim L^2 \sum_{j\ge 1} \mathfrak b_g(x_0,2^jr) 2^{-(M-n-2)j},
  \end{align*}
  for sufficiently large $M$.
\end{proof}

Combining \cref{thm:LP-equals-a-rho,thm:difference-of-a-coeff} yields
\begin{lemma}\label{thm:LP-beta-general}
  Let $g\colon\mathbb{R}^n\to\mathbb{R}$ satisfy a Lipschitz condition at $x_0$.
  Then, for any sufficiently large $N\in\mathbb N$,
  \begin{align*}
    |\xi_r \ast g(x_0)|
     & \lesssim ra_{\rho,\Omega_g^+}(\varphi_g(x_0),r) + L^2 r \sum_{h\ge 1} \mathfrak b_g(x_0,2^h r) 2^{-Nh}.
  \end{align*}
\end{lemma}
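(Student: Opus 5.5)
This is a direct combination of \cref{thm:LP-equals-a-rho} and \cref{thm:difference-of-a-coeff} through the triangle inequality. Set $x=\varphi_g(x_0)$. By \cref{thm:LP-equals-a-rho}, which only requires $g$ to satisfy a Lipschitz condition at $x_0$, we have the identity
\begin{align*}
  |\xi_r\ast g(x_0)| = r\,a_{\eta,\Omega_g^+}(x,r).
\end{align*}
So it suffices to bound $a_{\eta,\Omega_g^+}(x,r)$ by $a_{\rho,\Omega_g^+}(x,r)$ plus the stated error term.

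Next I write
\begin{align*}
  a_{\eta,\Omega_g^+}(x,r)\le a_{\rho,\Omega_g^+}(x,r)+\bigl|a_{\eta,\Omega_g^+}(x,r)-a_{\rho,\Omega_g^+}(x,r)\bigr|.
\end{align*}
The second term is estimated with \cref{thm:difference-of-a-coeff}. It holds under the same hypothesis (a Lipschitz condition at $x_0$ with constant $L$, and the cone $K$ of aperture $C_KL$ fixed as in its proof). For every $N$ large enough it gives the bound $\lesssim_N L^2\sum_{h\ge1}\mathfrak b_g(x_0,2^hr)2^{-Nh}$.

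Multiplying by $r$ yields the claimed inequality, with implicit constant depending on $N$ and $n$.

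The only point needing care is consistency: both lemmas must use the same definition of $a_{\eta}$, that is, the same cone $K$ and the same choice of hyperplane $H_x$. This is not a real obstacle, because $a_\eta$ and $a_\rho$ are independent of the choice of $H_x$ (both $\eta$ and $\rho$ are even and integrable), as noted right after their definition. Consequently the half-space chosen in \cref{thm:LP-equals-a-rho} (horizontal) and the one chosen in \cref{thm:difference-of-a-coeff} (parallel to $\ell_1$) give the same value of $a_{\eta,\Omega_g^+}(x,r)$, and the two estimates can be chained directly.
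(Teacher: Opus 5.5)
Your proposal is correct and matches the paper's argument: the paper obtains this lemma by combining the identity $|\xi_r\ast g(x_0)| = r\,a_{\eta,\Omega_g^+}(\varphi_g(x_0),r)$ from \cref{thm:LP-equals-a-rho} with the bound of \cref{thm:difference-of-a-coeff} via the triangle inequality, and then multiplying by $r$. Your remark on consistency is also right: $a_\eta$ does not depend on the choice of $H_x$, because $\eta$ is even and integrable.
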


If we also assume decay of the $\ve$-coefficients,
\cref{thm:a-eta-decay} yields the following.
\begin{lemma}\label{thm:LP-beta-estimate}
  Let $g\colon\mathbb{R}^n\to\mathbb{R}$ satisfy a Lipschitz condition at $x_0$,
  and let $\wt\ve_g(x_0,t) \lesssim t^\alpha$ for all $t>0$.
  Then, for any sufficiently large $N\in\mathbb N$,
  \begin{align*}
    |\xi_r \ast g(x_0)|
     & \lesssim r^{1+\alpha} + L^2 r \sum_{h\ge 1} \mathfrak b_g(x_0,2^h r) 2^{-Nh}.
  \end{align*}
\end{lemma}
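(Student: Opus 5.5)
This is an immediate consequence of \cref{thm:LP-beta-general} and \cref{thm:a-eta-decay}, and I would combine them as follows.

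Fix $x_0$ and $r>0$, and set $x=\varphi_g(x_0)$. Since $g$ satisfies a Lipschitz condition at $x_0$, \cref{thm:LP-beta-general} applies and gives
\[
  |\xi_r\ast g(x_0)|\lesssim r\,a_{\rho,\Omega_g^+}(x,r)+L^2 r\sum_{h\ge1}\mathfrak b_g(x_0,2^hr)2^{-Nh}
\]
for every sufficiently large $N$. So the only term left to control is $r\,a_{\rho,\Omega_g^+}(x,r)$.

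For that term we use the hypothesis $\wt\ve_g(x_0,t)\lesssim t^\alpha$ for all $t>0$. This is exactly the assumption of \cref{thm:a-eta-decay}, which yields
\[
  a_{\rho,\Omega_g^+}(x,r)\lesssim c\,r^\alpha,
\]
where $c=\int_0^\infty|\psi(s)|s^{n+\alpha}\,ds$. This constant is finite because $\psi$ is a Schwartz function, so it depends only on $n$, $\alpha$ and the fixed function $\xi$ and can be absorbed into $\lesssim$.

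Multiplying by $r$ gives $r\,a_{\rho,\Omega_g^+}(x,r)\lesssim r^{1+\alpha}$. Substituting this into the inequality from \cref{thm:LP-beta-general} gives the claimed bound for every $r>0$.

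There is no real obstacle here. The only point to check is that the radius-independent constant from \cref{thm:a-eta-decay} is finite, which the Schwartz decay of $\psi$ guarantees.
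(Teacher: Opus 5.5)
Your proposal is correct and is the same argument as the paper's: the paper obtains this lemma by inserting the bound $a_{\rho,\Omega_g^+}(\varphi_g(x_0),r)\lesssim r^\alpha$ from \cref{thm:a-eta-decay} into \cref{thm:LP-beta-general}. Your remark that the constant $\int_0^\infty|\psi(s)|\,s^{n+\alpha}\,ds$ is finite (by the Schwartz decay of $\psi$) is the only detail to check, and you checked it.
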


\begin{rem}\label{thm:lipschitz-at-x0-suffices}
  It suffices to require that there exists $r_0>0$ such that $\wt\ve_g(x_0,r)\le C_\varepsilon r^\alpha$ for $r\in (0,r_0)$.
  Since $\wt\ve_g(x_0,\cdot)$ is uniformly bounded, we have
  \begin{align*}
    \wt\ve_g(x_0,r)\lesssim (C_\varepsilon + r_0^{-\alpha}) r^\alpha, \quad\forall r>0.
  \end{align*}
\end{rem}
\vv

\subsection{Small Lipschitz constant on \texorpdfstring{$B(0,1)$}{B(0,1)}}\label{sec:restricted-g}
Consider an $L$-Lipschitz function
$f\colon B(0,1)\to\mathbb{R}$ with $f(0)=0$ and
$L<\lambda_0$, where $\lambda_0>0$ will be chosen later.
Suppose
\[
  \wt\ve_f(x_0,r)\leq C_{\varepsilon} r^\alpha
  \quad\text{for all }x_0\in B(0,1)\text{ and }r>0\text{ with }B(x_0,r)\subset
  B(0,1).
\]
Let $F$ be the zero extension of $f$ and consider $u=\chi F$, where $\chi\in C_c^\infty(\mathbb{R}^n)$ with $\mathbf{1}_{B(0,1/4)} \le \chi \le \mathbf{1}_{B(0,1/2)}$.
If not specified otherwise, $f,u$ and $F$ will be as above for the rest of the section.
The function $u$ is compactly supported and globally Lipschitz:
We first claim that $F$ satisfies a Lipschitz condition at $x_0\in\supp \chi$, i.e.
\begin{align}\label{eq:f-lipschitz-condition}
  |F(x_0) - F(y_0)| \le 2L |x_0 - y_0|, \qquad \forall y_0\in\mathbb{R}^n.
\end{align}
For $y_0 \in B(0,1)$ this follows from the assumption on $f$.
For $y_0\not\in B(0,1)$, we have $|x_0 - y_0| \ge \frac{1}{2}$, and thus
\begin{align*}
  |F(y_0) - F(x_0)|
  = |F(x_0)|
  \le \lVert f \rVert_{\infty}
  \le 2 L |x_0 - y_0|.
\end{align*}
This also implies that $u$ is Lipschitz:
Without loss of generality, let $x_0\in\supp \chi$ and $ y_0\in\mathbb{R}^n$.
We have
\begin{align}\label{eq:fg-lipschitz}
  |F(x_0)\chi(x_0) - F(y_0)\chi(y_0)|
   & \le |F(x_0)| |\chi(x_0) - \chi(y_0)| + |\chi(y_0)| |F(x_0) - F(y_0)| \nonumber \\
   & \le L \lVert \nabla \chi \rVert_{\infty} |x_0-y_0|
  + 2L\lVert \chi \rVert_{\infty} |x_0 - y_0|.
\end{align}

Note that $P_j u, \,S_k u\in\mathcal{S}(\mathbb{R}^n)$ since $u$ is integrable and compactly supported.

\vv

\subsection{Littlewood--Paley estimates in terms of graph
  \texorpdfstring{$\beta$}{β}-numbers}\label{sec:LP-bound-by-beta}

\begin{lemma}\label{thm:beta-graph-product}
  Let $f,g\colon\mathbb{R}^n\to \mathbb{R}$.
  We have
  \begin{align*}
    |g(x_0)| \mathfrak b_f(x_0,r)
    \le \mathfrak b_{fg}(x_0,r) & +  |f(x_0)|  \mathfrak b_g(x_0,r)
    +  \sup_{y_0\in B(x_0,r)} \frac{|f(x_0) - f(y_0)| |g(x_0) - g(y_0)|}r.
  \end{align*}
\end{lemma}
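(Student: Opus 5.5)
Since $\mathfrak b$ is defined through an infimum over affine functions, the plan is to take a near-optimal affine function $a$ for $\mathfrak b_{fg}(x_0,r)$ and one, $b$, for $\mathfrak b_g(x_0,r)$, and from them build an affine competitor for $g(x_0)f$ on $B(x_0,r)$. The natural candidate comes from the algebraic identity, for $y_0\in B(x_0,r)$,
\begin{align*}
  g(x_0)f(y_0) = f(y_0)g(y_0) - f(x_0)g(y_0) + f(x_0)g(x_0) - \bigl(f(y_0)-f(x_0)\bigr)\bigl(g(y_0)-g(x_0)\bigr),
\end{align*}
which you can check by expanding the last product. The first three terms are, respectively, $fg$, a constant multiple of $g$, and a constant.

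Set $A(y_0) := a(y_0) - f(x_0)\,b(y_0) + f(x_0)g(x_0)$. Since $a$ and $b$ are affine and $f(x_0), g(x_0)$ are constants, $A$ is affine. For $y_0\in B(x_0,r)$ the identity and the triangle inequality give
\begin{align*}
  |g(x_0)f(y_0) - A(y_0)| \le |fg(y_0)-a(y_0)| + |f(x_0)|\,|g(y_0)-b(y_0)| + |f(y_0)-f(x_0)|\,|g(y_0)-g(x_0)|.
\end{align*}
Take the supremum over $B(x_0,r)$, divide by $r$, and let $a,b$ approach the infima. The left side then bounds $\frac1r\inf_{a'}\|g(x_0)f - a'\|_{L^\infty(B(x_0,r))}$ from above.

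The last step is to identify this quantity with $|g(x_0)|\,\mathfrak b_f(x_0,r)$. If $g(x_0)\neq 0$, dividing by $|g(x_0)|$ maps affine functions bijectively onto affine functions, so the infimum equals $|g(x_0)|\,\mathfrak b_f(x_0,r)$. If $g(x_0)=0$, the left side of the claimed inequality is $0$ and there is nothing to prove. One could also worry about infinite quantities, but then the inequality is trivial.

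No step is a genuine obstacle; the only thing to get right is the choice of the identity above, which makes the cross term appear exactly as the supremum in the statement.
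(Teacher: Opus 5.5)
Your proof is correct and is essentially the paper's argument: the paper uses the same algebraic identity with the competitor $\bigl(L_{fg} - f(x_0)L_g + f(x_0)g(x_0)\bigr)/g(x_0)$, which is your $A$ divided by $g(x_0)$. The only differences are cosmetic: you use near-optimal affine functions instead of exact minimizers, and you spell out the trivial case $g(x_0)=0$, which the paper leaves implicit.
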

\begin{proof}
  If $g(x_0) \neq 0$, consider the affine function $A\colon\mathbb{R}^n\to\mathbb{R}$ defined by
  \begin{align*}
    A(y_0) := \frac{L_{fg}(y_0) - f(x_0) L_g(y_0) + f(x_0) g(x_0)}{g(x_0)},
  \end{align*}
  where $L_{fg}$ and $L_g$ are the minimizers of $\mathfrak b_{fg}(x_0,r)$ and $\mathfrak b_{g}(x_0,r)$, respectively.
  We calculate
  \begin{align*}
    g(x_0) (f(y_0) - A(y_0))
    = ( f(y_0)g(y_0) - L_{fg}(y_0) )
     & + f(x_0) ( L_g(y_0)-g(y_0) )          \\
     & - (f(x_0) - f(y_0))(g(x_0) - g(y_0)).
  \end{align*}
  Dividing by $r$, taking absolute values and $\sup_{y_0\in B(x_0,r)}$ yields the claim.
\end{proof}

We also have:

\begin{lemma}\label{thm:beta-graph-product*}
  Let $f,g\colon\mathbb{R}^n\to \mathbb{R}$.
  We have
  \begin{align*}
    \mathfrak b_{fg}(x_0,r) \le  |g(x_0)| \mathfrak b_f(x_0,r) & + |f(x_0)| \mathfrak b_g(x_0,r)
    + \sup_{y_0\in B(x_0,r)} \frac{|f(x_0) - f(y_0)| |g(x_0) - g(y_0)|}r.
  \end{align*}
\end{lemma}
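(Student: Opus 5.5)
The plan is to mirror the proof of \cref{thm:beta-graph-product}, but now construct an affine competitor for the product $fg$ out of the near-optimal affine approximants of $f$ and $g$. Let $L_f$ and $L_g$ be affine functions attaining, or within an arbitrary $\delta>0$ of, the infima defining $\mathfrak b_f(x_0,r)$ and $\mathfrak b_g(x_0,r)$. If an infimum is not attained we pass to a limit $\delta\to0$ at the end. The product $L_fL_g$ is quadratic, not affine, so we linearize it around $x_0$. The candidate is
\[
A(y_0) := g(x_0)\,L_f(y_0) + f(x_0)\,L_g(y_0) - f(x_0)g(x_0),
\]
which is affine since $f(x_0)$ and $g(x_0)$ are constants.

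The key step is the algebraic identity, valid for every $y_0$,
\[
f(y_0)g(y_0) - A(y_0) = g(x_0)\bigl(f(y_0)-L_f(y_0)\bigr) + f(x_0)\bigl(g(y_0)-L_g(y_0)\bigr) + \bigl(f(y_0)-f(x_0)\bigr)\bigl(g(y_0)-g(x_0)\bigr).
\]
To check it, expand the last product as $fg - f\,g(x_0) - f(x_0)\,g + f(x_0)g(x_0)$ evaluated at $y_0$. Adding $g(x_0)f - g(x_0)L_f + f(x_0)g - f(x_0)L_g$ gives $fg - g(x_0)L_f - f(x_0)L_g + f(x_0)g(x_0) = fg - A$.

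Next take absolute values, the supremum over $y_0\in B(x_0,r)$, and divide by $r$. The first two terms are bounded by $|g(x_0)|\,\mathfrak b_f(x_0,r)$ and $|f(x_0)|\,\mathfrak b_g(x_0,r)$, up to $\delta$. The third term is exactly the cross supremum appearing in the statement. Since $A$ is an admissible affine function, $\mathfrak b_{fg}(x_0,r)\le r^{-1}\|fg-A\|_{L^\infty(B(x_0,r))}$, and letting $\delta\to0$ gives the claim.

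There is no real obstacle. The only point needing care is choosing the linearization so that the error term is the symmetric cross product $(f(y_0)-f(x_0))(g(y_0)-g(x_0))$, rather than something involving $L_f-f(x_0)$. Using $L_f$ only multiplied by the constant $g(x_0)$, and vice versa, makes the identity close up exactly.
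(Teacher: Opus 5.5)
Your proof is correct and is essentially the paper's argument. You use the same affine competitor $A(y_0)=g(x_0)L_f(y_0)+f(x_0)L_g(y_0)-f(x_0)g(x_0)$ and the same three-term identity, then take the supremum over $B(x_0,r)$ and divide by $r$; your $\delta$-approximation of the minimizers is a harmless extra precaution, since the paper simply takes $L_f,L_g$ to be minimizers.
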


\begin{proof}
  Consider the affine function $A\colon\mathbb{R}^n\to\mathbb{R}$ defined by
  \begin{align*}
    A(y_0) = g(x_0) L_f(y_0)  + f(x_0) L_g(y_0) - f(x_0) g(x_0),
  \end{align*}
  where $L_f$ and $L_g$ are the minimizers of $\mathfrak b_{f}(x_0,r)$ and $\mathfrak b_{g}(x_0,r)$, respectively.
  Then we have
  \begin{align*}
    f(y_0)g(y_0) - A(y_0) =
    g(x_0) (f(y_0)- L_f(y_0)) & + f(x_0) (g(y_0) - L_g(y_0))          \\
                              & + (f(y_0) - f(x_0))(g(y_0) - g(x_0)).
  \end{align*}
  Dividing by $r$, taking absolute values and $\sup_{y_0\in B(x_0,r)}$ yields the lemma.
\end{proof}

\begin{lemma}\label{thm:Aj-upper-bound-bk}
  Assume that $\wt\ve_f(x_0,r)\lesssim r^\alpha$.
  We have
  \begin{align*}
    \left|  P_j u(x_0)  \right| \lesssim 2^{-(1+\alpha)j} + L^2 2^{-j} \sum_{h\ge 1} 2^{-Nh} \mathfrak{b}_u(x_0, 2^{h-j}), \qquad j\in\mathbb{Z}, \, x_0\in\mathbb{R}^n.
  \end{align*}
\end{lemma}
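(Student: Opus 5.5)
We would split according to the scale $r=2^{-j}$ and the position of $x_0$, and in the main case reduce the statement to \cref{thm:LP-beta-estimate} applied to the extension $F$. The product $u=\chi F$ is then handled by a commutator estimate together with \cref{thm:beta-graph-product}.

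\emph{Trivial cases.} If $j\le 0$, we use $|P_ju(x_0)|\le\|\xi\|_{L^1}\|u\|_\infty\lesssim L\le 2^{-(1+\alpha)j}$. If $j>0$ and $|x_0|\ge 3/4$, then $u$ vanishes on $B(x_0,1/4)$ because $\supp\chi\subset B(0,1/2)$. The rapid decay of $\xi$ then gives $|P_ju(x_0)|\lesssim L\,2^{-Nj}\le 2^{-(1+\alpha)j}$. So it remains to treat $|x_0|<3/4$ and $r=2^{-j}\le 1$. In that region the argument leading to \cref{eq:f-lipschitz-condition} still applies, now with constant $4L$: if $y_0\notin B(0,1)$ then $|x_0-y_0|\ge1/4$. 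Hence $F$ satisfies a Lipschitz condition at $x_0$. Moreover $\wt\ve_F(x_0,t)=\wt\ve_f(x_0,t)\le C_\ve t^\alpha$ for $t<1/4$, and by \cref{thm:lipschitz-at-x0-suffices} this holds for all $t>0$ after enlarging the constant. Therefore \cref{thm:LP-beta-estimate} gives
\[
|\xi_r*F(x_0)|\lesssim r^{1+\alpha}+L^2r\sum_{h\ge1}2^{-Nh}\mathfrak b_F(x_0,2^hr).
\]

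\emph{Commutator.} Next we write
\[
\xi_r*u(x_0)-\chi(x_0)\,\xi_r*F(x_0)=\int\xi_r(x_0-y)\bigl(\chi(y)-\chi(x_0)\bigr)F(y)\,dy.
\]
We Taylor expand $\chi(y)-\chi(x_0)=\nabla\chi(x_0)\cdot(y-x_0)+O(|y-x_0|^2)$ and write $F(y)=F(x_0)+(F(y)-F(x_0))$.
\begin{itemize}
\item The term $F(x_0)\nabla\chi(x_0)\cdot\int\xi_r(x_0-y)(y-x_0)\,dy$ vanishes because all moments of $\xi$ are zero.
\item The remaining linear term is bounded by $\|\nabla\chi\|_\infty\int|\xi_r(x_0-y)|\,|y-x_0|\,|F(y)-F(x_0)|\,dy\lesssim L r^2$, using the Lipschitz condition at $x_0$.
\item The quadratic remainder against $F(x_0)$ is $\lesssim Lr^2$.
\end{itemize}
Since $r\le1$ and $\alpha<1$, the commutator is $\lesssim r^{1+\alpha}$. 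It follows that $|P_ju(x_0)|\lesssim r^{1+\alpha}+L^2r\sum_h2^{-Nh}\chi(x_0)\,\mathfrak b_F(x_0,2^hr)$, using $0\le\chi\le1$.

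\emph{From $F$ to $u$.} Finally we apply \cref{thm:beta-graph-product} with $f=F$, $g=\chi$ and radius $\rho=2^hr$:
\[
\chi(x_0)\,\mathfrak b_F(x_0,\rho)\le\mathfrak b_u(x_0,\rho)+|F(x_0)|\,\mathfrak b_\chi(x_0,\rho)+\sup_{B(x_0,\rho)}\frac{|F(x_0)-F(y_0)|\,|\chi(x_0)-\chi(y_0)|}{\rho}.
\]
By smoothness of $\chi$ we have $\mathfrak b_\chi(x_0,\rho)\lesssim\rho$, and the last term is $\lesssim L\rho$. Thus $\chi(x_0)\,\mathfrak b_F(x_0,\rho)\le\mathfrak b_u(x_0,\rho)+O(L\,2^hr)$. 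The error then contributes $L^3r^2\sum_h2^{-(N-1)h}\lesssim r^{1+\alpha}$, and what remains is exactly the claimed bound.

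The step we expect to require the most care is the commutator. One must check that the cancellation from the vanishing moments of $\xi$ is really available, and that only the Lipschitz condition at $x_0$ (not a global one) is needed for the tails, so that Schwartz decay absorbs the growth in $|y-x_0|$. The bookkeeping of constants ($4L$ versus $L$, and the range $t<1/4$) is routine.
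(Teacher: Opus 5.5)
Your proposal is correct and follows essentially the paper's proof: Taylor-expand $\chi$ at $x_0$, kill the first-order term against $F(x_0)$ using the vanishing moments of $\xi$, and bound what remains by the Lipschitz condition of $F$ at $x_0$. The main term $\chi(x_0)\,\xi_r*F(x_0)$ is then handled via \cref{thm:LP-beta-estimate} (with \cref{thm:lipschitz-at-x0-suffices}) and \cref{thm:beta-graph-product}, exactly as in the paper. Your extra split at $|x_0|=3/4$ with constant $4L$ is only a cosmetic variant of the paper's restriction to $x_0\in\supp\chi$.

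One small bookkeeping point: pair the quadratic Taylor remainder with $F(y)$ and bound it by $\|F\|_\infty\lesssim L$, rather than pairing it only with $F(x_0)$. The cross term you left out is then covered, and it is trivially $\lesssim Lr^2$.
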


\begin{proof}
  Since $\lVert P_j u  \rVert_{\infty} \lesssim 2^{-j}\lVert u \rVert_{\mathrm{Lip}}$, the estimate follows for $j<0$.

  Now let $j\ge 0$.
  A Taylor expansion of $\chi$ yields
  \begin{align}\label{eqbet**1}
    \left|  P_j u(x_0)  \right|
     & = \left|  \int \xi_{2^{-j}}(x_0 -y_0) \chi(y_0) F(y_0) \,\mathrm{d}y_0  \right|                                                  \\
     & \lesssim  \left|  \int \xi_{2^{-j}}(x_0-y_0)\chi(x_0) F(y_0) \,\mathrm{d}y_0  \right|   \notag                                   \\
     & \quad\quad +
    \left|  \sum_{|\beta| = 1}\int \xi_{2^{-j}}(x_0-y_0) \partial^\beta \chi(x_0)(y_0-x_0)^\beta F(y_0) \,\mathrm{d}y_0  \right| \notag \\
     & \quad\quad + \lVert \nabla^2 \chi \rVert_{\infty} \int |\xi_{2^{-j}}(x_0-y_0)| |y_0-x_0|^2 |F(y_0)| \,\mathrm{d}y_0.\notag
  \end{align}

  For the remainder term we have
  \begin{align*}
    \lVert \nabla^2 \chi \rVert_{\infty} \int |\xi_{2^{-j}}(x_0-y_0)| |x_0-y_0|^2 |F(y_0)| \,\mathrm{d}y_0
     & \lesssim \lVert \nabla^2 \chi \rVert_{\infty} \lVert F \rVert_{\infty} \int |\xi(z_0)||z_0|^2 2^{-2j} \,\mathrm{d} z_0 \\
     & \lesssim 2^{-2j}.
  \end{align*}

  To deal with the term with $|\beta|=1$, using vanishing moments, we write
  \begin{align*}
    \bigg|  \int \xi_{2^{-j}}(x_0-y_0) & \partial^\beta \chi(x_0)(y_0-x_0)^\beta F(y_0) \,\mathrm{d}y_0  \bigg|                                                     \\
                                       & = |\partial^\beta \chi(x_0)| \left|  \int \xi_{2^{-j}}(x_0-y_0) (y_0-x_0)^\beta (F(y_0) - F(x_0)) \,\mathrm{d}y_0  \right| \\
                                       & \lesssim \lVert \nabla \chi \rVert_{\infty} L \int |\xi_{2^{-j}}(x_0-y_0)| |y_0-x_0|^2\,\mathrm{d}y_0
    \lesssim 2^{-2j},
  \end{align*}
  where we can use \cref{eq:f-lipschitz-condition} since $x_0\in\supp \chi$.

  For the term with $\beta=0$:
  Following \cref{thm:lipschitz-at-x0-suffices}, we can apply \cref{thm:LP-beta-estimate} since $F$ satisfies a Lipschitz condition at $x_0\in\supp \chi$.
  Subsequently applying \cref{thm:beta-graph-product} yields
  \begin{align*}
    \chi(x_0) \left|  P_j F(x_0)  \right|
     & \lesssim 2^{-(1+\alpha)j} + L^2 2^{-j} \chi(x_0) \sum_{h\ge 1} 2^{-Nh} \mathfrak{b}_F(x_0, 2^{h-j}) \\
     & \lesssim 2^{-(1+\alpha)j} + L^2 2^{-j} (
    \sum_{h\ge 1} 2^{-Nh} \mathfrak{b}_u(x_0, 2^{h-j}) + \lVert f \rVert_{\infty} 2^{-j} + 2^{-j}
    )                                                                                                      \\
     & \lesssim 2^{-(1+\alpha)j} + L^2 2^{-j} \sum_{h\ge 1} 2^{-Nh} \mathfrak{b}_u(x_0, 2^{h-j}).
  \end{align*}

  Combining all estimates yields the claim.
\end{proof}

\subsection{Affine approximations and Littlewood--Paley pieces}\label{sec Littlewood-Paley}
Define, for $j,k\in\mathbb Z$
\begin{align*}
  A_j:=2^j\lVert P_j u\rVert_{L^\infty(\mathbb R^n)} \quad\text{and}\quad
  \mathfrak b_k := \sup_{x_0\in\mathbb{R}^n} \mathfrak b_u (x_0, 2^{-k}).
\end{align*}

\begin{lemma}
  Let $u\in C_c(\mathbb{R}^n)$ be Lipschitz.
  For any $k\in\mathbb{Z}$, we have
  \begin{align}
    u= S_k u + \sum_{j>k} P_j u \quad\text{and}\quad
    \lVert \nabla^2 S_k u \rVert_{\infty} \lesssim \sum_{j\le k} 2^{2j} \lVert P_j u \rVert_{\infty},
  \end{align}
  with absolute convergence in the uniform norm.
\end{lemma}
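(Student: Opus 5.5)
The plan is to work on the Fourier side and use the partition of unity $\sum_j\widehat\xi(2^{-j}\zeta)=1$ for $\zeta\neq0$, together with $\widehat\varphi=\sum_{j\le0}\widehat\xi(2^{-j}\cdot)$. This gives the identity $\widehat{S_k u}+\sum_{j>k}\widehat{P_j u}=\widehat\varphi(2^{-k}\zeta)\widehat u+\sum_{j>k}\widehat\xi(2^{-j}\zeta)\widehat u=\widehat u$ for $\zeta\neq 0$. The remaining work is to turn this into absolute uniform convergence and to obtain the bound on $\nabla^2 S_k u$.

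\emph{Step 1: uniform bound on the pieces.} Since $\int\xi=0$, we can write
\[
P_j u(x_0)=\int\xi_{2^{-j}}(x_0-y_0)\,(u(y_0)-u(x_0))\,\mathrm{d}y_0 .
\]
Hence $\|P_ju\|_\infty\le \|u\|_{\mathrm{Lip}}\,2^{-j}\int|\xi(z)||z|\,\mathrm{d}z\lesssim 2^{-j}$. It follows that $\sum_{j>k}\|P_ju\|_\infty<\infty$, so the series converges absolutely in the uniform norm to some continuous bounded function $v_k$.

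\emph{Step 2: identifying the limit.} The partial sums satisfy $S_ku+\sum_{j=k+1}^{m}P_ju=S_mu$, by telescoping $P_j=S_j-S_{j-1}$. Since $\int\varphi=1$ and $u$ is uniformly continuous, $S_mu=\varphi_{2^{-m}}\ast u\to u$ uniformly as $m\to\infty$. This is the standard approximate identity argument: split $|\varphi_{2^{-m}}\ast u-u|$ into $|y|<\delta$ and $|y|\ge\delta$, and use the decay of $\varphi$. Therefore $u=S_ku+\sum_{j>k}P_ju$, and this approach avoids any need to discuss the zero frequency.

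\emph{Step 3: the second-derivative bound.} This is the only delicate point. A naive attempt would write $S_ku=\sum_{j\le k}P_ju$, but that series need not converge to $S_ku$: as $j\to-\infty$, $S_ju$ tends to $0$ (since $u\in L^1$, $\|S_ju\|_\infty\le 2^{jn}\|\varphi\|_\infty\|u\|_1\to0$), but convergence of the derivatives has to be checked. Instead I will use frequency support. Each $P_ju$ has Fourier support in $A(0,(1-\tfrac17)2^j,2^{j+1})$. We have $\nabla^2S_k u=\lim_{m\to-\infty}\nabla^2(S_ku-S_mu)=\lim_{m}\sum_{m<j\le k}\nabla^2P_ju$, provided $\|\nabla^2S_mu\|_\infty\lesssim 2^{2m}2^{mn}\|u\|_1\to0$, which holds because $\nabla^2\varphi_{2^{-m}}$ has $L^\infty$ norm $2^{m(n+2)}\|\nabla^2\varphi\|_\infty$. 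For each piece, Bernstein's inequality (write $P_ju=\wt P_jP_ju$ with $\wt P_j$ a fattened Littlewood--Paley operator whose kernel $\wt\xi_{2^{-j}}$ satisfies $\|\nabla^2\wt\xi_{2^{-j}}\|_{L^1}\lesssim 2^{2j}$) gives $\|\nabla^2P_ju\|_\infty\lesssim 2^{2j}\|P_ju\|_\infty$. Summing over $j\le k$ yields the claim. If $\sum_{j\le k}2^{2j}\|P_ju\|_\infty$ diverges the inequality is trivial; otherwise the series converges absolutely, again with $\|P_ju\|_\infty\lesssim 2^{-j}$ in Step 1 making the low-frequency sum $\sum_{j\le k}2^{j}$ finite anyway.

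The main obstacle is this low-frequency limit in Step 3. I would handle it with the $L^1$ bound on $u$, which holds because $u$ is compactly supported, rather than by any Fourier-side argument at $\zeta=0$.
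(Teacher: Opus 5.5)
Your proposal is correct and follows essentially the paper's argument: you bound $\|P_ju\|_\infty\lesssim 2^{-j}$ via $\int\xi=0$ and the Lipschitz bound, telescope to get $S_mu\to u$, and then write $\nabla^2S_ku=\nabla^2S_mu+\sum_{m<j\le k}\nabla^2P_ju$, letting the low-frequency term vanish and applying Bernstein's inequality to each piece. The only differences are cosmetic. You kill $\nabla^2S_mu$ with $\|u\|_{L^1}\|\nabla^2\varphi_{2^{-m}}\|_\infty$, while the paper uses $\|u\|_\infty\|\nabla^2\varphi_{2^{-m}}\|_{L^1}\lesssim 2^{2m}$. You obtain $S_mu\to u$ from uniform continuity, while the paper uses the Lipschitz bound to get the explicit rate $2^{-m}$.
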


\begin{proof}
  First, we have
  \begin{align}
    \left|  S_M u (z) - u(z)  \right|
    \le \int |\varphi_{2^{-M}}(w)| |u(z-w) - u(z) | \,\mathrm{d} w
    \le \lVert u \rVert_{\mathrm{Lip}} \int |\varphi_{2^{-M}}(w)| |w| \,\mathrm{d} w
    \lesssim 2^{-M},
  \end{align}
  so $\lVert S_M u - u \rVert_{\infty} \to 0$ as $M\to \infty$.
  Similarly,
  \begin{align*}
    \lVert P_j u  \rVert_{\infty} \le \lVert u \rVert_{\mathrm{Lip}} \int |\xi_{2^{-j}}(w)||w| \,\mathrm{d} w \lesssim 2^{-j}.
  \end{align*}
  Together, this implies the first claim.

  Next, we have
  \begin{align*}
    \nabla^2 S_k u = \nabla^2 S_M u + \sum_{M<j\le k} \nabla^2 P_j u, \quad M<k.
  \end{align*}
  Young's convolution inequality implies
  \begin{align*}
    \lVert  \nabla^2 S_M u  \rVert_{\infty}
    \le \lVert  \nabla^2 \varphi_{2^{-M}}  \rVert_{1} \lVert u \rVert_{\infty}
    \le 2^{2M} \lVert \nabla^2 \varphi \rVert_1 \lVert u \rVert_{\infty}
    \to 0,
  \end{align*}
  as $M\to -\infty$.

  Now \cite[Corollary 2.2.4]{grafakosModernFourierAnalysis2014} implies
  \begin{align*}
    \lVert \nabla^2 P_j u \rVert_{\infty} \lesssim 2^{2j} \lVert P_j u \rVert_{\infty}.
  \end{align*}
  Therefore,
  \begin{align*}
    \lVert \nabla^2 S_k u \rVert_{\infty} \lesssim \sum_{j\le k} 2^{2j} \lVert P_j u \rVert_{\infty}.
  \end{align*}
\end{proof}

\begin{lemma}\label{thm:bk-upper-bound-Aj}
  Let $u\in C_c(\mathbb{R}^n)$ be Lipschitz.
  We have
  \begin{align*}
    \mathfrak b_k \lesssim \sum_j 2^{-|j-k|} A_j, \qquad\forall k\in\mathbb{Z}.
  \end{align*}
\end{lemma}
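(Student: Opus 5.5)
We use the decomposition from the preceding lemma, $u=S_ku+\sum_{j>k}P_ju$, which converges absolutely in the uniform norm. We split $u$ into a smooth low-frequency part and a rough high-frequency part and estimate each separately.

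Fix $x_0\in\R^n$ and $r=2^{-k}$. As a competitor in the infimum defining $\mathfrak b_u(x_0,2^{-k})$, take the first-order Taylor polynomial of the low-frequency part at the center:
$$a(y_0)=S_ku(x_0)+\nabla S_ku(x_0)\cdot(y_0-x_0).$$
This is legitimate because $S_ku=\varphi_{2^{-k}}\ast u$ is smooth. Indeed, $\nabla^2 S_ku$ is bounded by the preceding lemma, and its right-hand side is finite since $\|P_ju\|_\infty\lesssim 2^{-j}$. Then for $y_0\in B(x_0,2^{-k})$ we write
$$u(y_0)-a(y_0)=\bigl(S_ku(y_0)-a(y_0)\bigr)+\sum_{j>k}P_ju(y_0).$$

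The low-frequency term is handled by Taylor's theorem:
$$|S_ku(y_0)-a(y_0)|\le \tfrac12\|\nabla^2S_ku\|_\infty\,2^{-2k}.$$
By the preceding lemma, $\|\nabla^2 S_ku\|_\infty\lesssim\sum_{j\le k}2^{2j}\|P_ju\|_\infty=\sum_{j\le k}2^{j}A_j$. After dividing by $r=2^{-k}$, this term contributes
$$2^{k}\cdot2^{-2k}\sum_{j\le k}2^jA_j=\sum_{j\le k}2^{-(k-j)}A_j.$$

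The high-frequency term is bounded crudely:
$$\Bigl|\sum_{j>k}P_ju(y_0)\Bigr|\le\sum_{j>k}2^{-j}A_j.$$
Dividing by $2^{-k}$ gives $\sum_{j>k}2^{-(j-k)}A_j$. Adding the two contributions and taking the supremum over $x_0$ yields $\mathfrak b_k\lesssim\sum_j2^{-|j-k|}A_j$.

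The only delicate point is justifying the bound on $\nabla^2S_ku$ and the convergence of the decomposition. Both are supplied by the preceding lemma, which uses only that $u$ is compactly supported and Lipschitz. So the main work is already done, and the rest is bookkeeping of the geometric weights.
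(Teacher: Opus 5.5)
Your proposal is correct and follows the paper's proof: the paper also uses the first-order Taylor polynomial of $S_ku$ at the center as the competitor. It bounds the remainder by $\tfrac12 r^2\|\nabla^2S_ku\|_\infty$ together with the preceding lemma, controls the high-frequency tail by $\sum_{j>k}\|P_ju\|_\infty$, and then sets $r=2^{-k}$ and divides by $r$, exactly as you do.
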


\begin{proof}
  For $z\in\mathbb{R}^n$, define
  \begin{align*}
    a_{z,k}(w) := S_ku(z) + \nabla S_k u(z) \cdot (w-z).
  \end{align*}
  From the representation in the lemma above, we have
  \begin{align*}
    \sup_{w\in B(z,r)} |u(w) - a_{z,k}(w)|
     & \le \frac{1}{2} r^2 \lVert  \nabla^2 S_k u \rVert_{\infty} + \sum_{j>k} \lVert P_j u \rVert_{\infty}      \\
     & \lesssim r^2 \sum_{j\le k} 2^{2j} \lVert P_j u \rVert_{\infty} + \sum_{j>k} \lVert P_j u \rVert_{\infty}.
  \end{align*}
  Setting $r=2^{-k}$, dividing by $r$ and taking $\sup_z$ yields the conclusion.
\end{proof}

\subsection{Closing the loop -- Littlewood--Paley bounds}\label{sec:absorption}
\begin{theorem}\label{thm:Rn-unit-ball}
  For each $n \ge 2$ and $0 < \alpha < 1$, there exists $\lambda_0 > 0$ such that the following holds.
  Let $f\colon B(0,1)\to\mathbb{R}$ be $L$-Lipschitz, $f(0)=0$, and $L<\lambda_0$.
  Suppose
  \[
    \wt\ve_f(z,r)\leq C(\alpha,\lambda_0)(C_{\varepsilon}+L) r^\alpha
    \quad\text{whenever } B(z,r)\subset B(0,1).
  \]
  Then $f\in C^{1,\alpha}(B(0,1/4))$.
\end{theorem}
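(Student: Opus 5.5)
The plan is to combine \cref{thm:Aj-upper-bound-bk} and \cref{thm:bk-upper-bound-Aj} into a closed system of inequalities for the sequence $A_j=2^j\|P_ju\|_\infty$. The smallness of $L$ will let us absorb the error term. The conclusion is the Littlewood--Paley bound $\sup_j 2^{j\alpha}A_j<\infty$. Since $0<\alpha<1$, this gives $\nabla u\in \dot C^{\alpha}(\R^n)$ via the standard characterization of Hölder--Zygmund spaces. Because $u=f$ on $B(0,1/4)$, this yields $f\in C^{1,\alpha}(B(0,1/4))$.

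\emph{Step 1: localized input.} Apply \cref{thm:Aj-upper-bound-bk}, using \cref{thm:lipschitz-at-x0-suffices} with $r_0\approx 1/2$ for $x_0\in\supp\chi$. For $x_0\notin\supp\chi$ we handle $P_ju(x_0)$ separately. Since $u$ vanishes near $x_0$ up to distance $\dist(x_0,\supp \chi)$, we will instead bound $\mathfrak b$-based terms directly. Alternatively, we slightly enlarge the cutoff and exploit that the Schwartz tails of $\xi$ decay fast. This gives
\[
A_j\lesssim C' 2^{-\alpha j}+L^2\sum_{h\ge1}2^{-Nh}\mathfrak b_{j-h}\qquad(j\ge0),
\]
with $C'\lesssim C_\ve+1$. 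For $j<0$ we simply use $A_j\lesssim L$.

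\emph{Step 2: substitute and absorb.} Insert \cref{thm:bk-upper-bound-Aj}, $\mathfrak b_{j-h}\lesssim\sum_i 2^{-|i-j+h|}A_i$, to obtain
\[
A_j\lesssim C'2^{-\alpha j}+L^2\sum_i K(j,i)A_i,\qquad K(j,i)=\sum_{h\ge1}2^{-Nh}2^{-|i-j+h|}.
\]
Now set $M_J:=\max_{0\le j\le J}2^{\alpha j}A_j$, which is finite since $A_j\lesssim L$. We multiply by $2^{\alpha j}$ and bound the weighted kernel:
\[
\sum_i 2^{\alpha(j-i)}K(j,i)\le C_0.
\]
This sum converges because $\alpha<1$ handles the region $i>j$, and the factor $2^{-Nh}$ with $N$ large handles $i<j$. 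The terms with $i<0$ contribute $\lesssim L\,2^{\alpha j}2^{-(j)}$-type quantities, which are bounded. The terms with $i>J$ are controlled using $A_i\lesssim L$ together with the decay $2^{-|i-j|}$; they contribute at most $L\,2^{-(1-\alpha)(J-j)}$-type errors, which are harmless. Altogether,
\[
M_J\le C(C'+L)+C C_0L^2M_J.
\]
Choosing $\lambda_0$ with $CC_0\lambda_0^2<1/2$ gives $M_J\le 2C(C'+L)$ uniformly in $J$, and hence $A_j\lesssim (C_\ve+L)2^{-\alpha j}$ for all $j$.

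\emph{Step 3: conclude.} Write $\nabla u=\sum_j\nabla P_ju$ and use Bernstein's inequality $\|\nabla P_j u\|_\infty\lesssim 2^j\|P_ju\|_\infty=A_j\lesssim 2^{-\alpha j}$. Then split the sum at $2^{-k}\approx|x-y|$ in the standard way:
\[
|\nabla u(x)-\nabla u(y)|\lesssim\sum_{j\le k}2^{j}|x-y|A_j+\sum_{j>k}A_j\lesssim|x-y|^\alpha,
\]
which is valid since $0<\alpha<1$.

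The main obstacle is the absorption in Step 2. The kernel sums must be uniformly bounded with weight $2^{\alpha(j-i)}$, which forces $\alpha<1$ and $N>1+\alpha$. We also need a priori finiteness, which is why we use the truncated maxima $M_J$. A secondary issue is the validity of \cref{thm:Aj-upper-bound-bk} for points $x_0$ outside $\supp\chi$. There I would first try exploiting the vanishing moments of $\xi$ and the smoothness of $\chi$ near the boundary of its support.
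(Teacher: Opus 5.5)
Your outline matches the paper's: combine \cref{thm:Aj-upper-bound-bk} and \cref{thm:bk-upper-bound-Aj}, absorb using the smallness of $L$, and conclude by the Littlewood--Paley characterization. However, the absorption in Step~2 does not close as written, because the tail $i>J$ is not harmless. For $i>j$ one has exactly $K(j,i)=2^{-(i-j)}\sum_{h\ge1}2^{-(N+1)h}\approx 2^{-(i-j)}$. So with only the a priori bound $A_i\lesssim L$, the tail contributes to $2^{\alpha j}A_j$ the amount
\[
L^2\,2^{\alpha j}\sum_{i>J}K(j,i)\,A_i\lesssim L^3\,2^{\alpha j}\,2^{-(J-j)} .
\]
At $j=J$ this is $L^3\,2^{\alpha J}$. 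Your claimed bound $L\,2^{-(1-\alpha)(J-j)}$ is only valid for $j\le J/2$. What you actually obtain is
\[
M_J\le C(C'+L)+CC_0L^2M_J+CL^3\,2^{\alpha J},
\]
which is not uniform in $J$; at $j=J$ it merely says $A_J\lesssim C'2^{-\alpha J}+L^3$. Restricting the range of $j$ does not cut off the high-frequency input, and a decay-free bound on $A_i$ is exactly what you are trying to improve.

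The paper avoids needing $\sup_j 2^{\alpha j}A_j<\infty$ a priori. With $\gamma=(1+\alpha)/2$ it sets $\mathcal A_j=\sum_m 2^{-\gamma|j-m|}A_m$, which is finite for each $j$ because $A_m\lesssim\|u\|_{\mathrm{Lip}}$. Testing the closed inequality against $2^{-\gamma|j-k|}$ gives $\mathcal A_j\le C2^{-\alpha j}+C'L^2\mathcal A_j$ at the \emph{same} $j$, where absorption is legitimate.

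You can also repair your scheme by replacing the truncated maximum with a capped weight, $M_J:=\sup_{j\in\Z}2^{\alpha\min(j,J)}A_j$. This is finite a priori, since it is at most $2^{\alpha J}\sup_jA_j$. Because $2^{\alpha(\min(j,J)-\min(i,J))}\le 2^{\alpha\max(j-i,0)}$, your kernel bound applies uniformly in $J$. Absorption then gives $M_J\lesssim C'+L$, hence $A_j\lesssim(C'+L)2^{-\alpha j}$.

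Two smaller remarks. First, \cref{thm:Aj-upper-bound-bk} already holds for all $x_0\in\R^n$: when $\chi(x_0)=\nabla\chi(x_0)=0$, the Taylor expansion directly gives $|P_ju(x_0)|\lesssim 2^{-2j}$, so your Step~1 workaround is unnecessary. Second, you have the roles in the kernel bound reversed. For $i>j$ the weight $2^{\alpha(j-i)}$ itself decays. For $i<j$, the $h=1$ term gives $K(j,i)\gtrsim 2^{-(j-i)}$ for every $N$, and that region is where $\alpha<1$ is needed. No condition like $N>1+\alpha$ is required.
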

\begin{proof}
  Fix $N>1$.
  Combining \cref{thm:Aj-upper-bound-bk,thm:bk-upper-bound-Aj} gives
  \begin{align}
    A_k
     & \le C_0 2^{-\alpha k}
    +C_1L^2\sum_{h\ge1}2^{-Nh}\mathfrak b_{k-h}
    \notag                                                \\
     & \le C_0 2^{-\alpha k}
    +C_2L^2\sum_{h\ge1}2^{-Nh}
    \sum_{\ell\in\mathbb{Z}}2^{-|k-h-\ell|}A_\ell  \notag \\
     & \le C_0 2^{-\alpha k}
    +C_3L^2\sum_{\ell\in\mathbb{Z}}2^{-|k-\ell|}A_\ell,
    \qquad k\in\mathbb{Z},
    \label{eq:closed-sequence-estimate}
  \end{align}
  since $|k-h-\ell|\ge |k-\ell|-h$.
  Here $C_0$ may depend on $C_\varepsilon$, whereas $C_1,C_2$
  are independent of $L$ and $C_\varepsilon$.

  Since $\alpha<\gamma<1$, we have
  \begin{align}
    \sum_{k\in\mathbb{Z}}2^{-\gamma|j-k|}2^{-\alpha k}
    \le C_{\alpha,\gamma}2^{-\alpha j}
    \quad\text{and}\quad
    \sum_{k\in\mathbb{Z}}2^{-\gamma|j-k|}2^{-|k-\ell|}
    \le C_\gamma 2^{-\gamma|j-\ell|},
    \label{eq:weighted-estimates}
  \end{align}
  where the latter follows from the triangle inequality
  in the form
  $\gamma|j-k|+|k-\ell|
    \ge \gamma|j-\ell|+(1-\gamma)|k-\ell|$.

  Set $\gamma=(1+\alpha)/2$ and define
  \[
    \mathcal A_j
    :=\sum_{m\in\mathbb{Z}}2^{-\gamma|j-m|}A_m \lesssim \sum_{m\in\mathbb{Z}} 2^{-\gamma|j-m|} \lVert u \rVert_{\mathrm{Lip}}<\infty.
  \]
  Fix $j\in\mathbb{Z}$. Multiply \cref{eq:closed-sequence-estimate}
  by $2^{-\gamma|j-k|}$ and sum in $k$.
  Interchanging nonnegative sums and applying
  \cref{eq:weighted-estimates}
  gives
  \begin{align*}
    \mathcal A_j
     & \le C_0\sum_{k\in\mathbb{Z}} 2^{-\gamma|j-k|}2^{-\alpha k}
    + C_3L^2\sum_{\ell\in\mathbb{Z}}A_\ell \sum_{k\in\mathbb{Z}} 2^{-\gamma|j-k|}2^{-|k-\ell|}                      \\
     & \le C_0C_{\alpha,\gamma}2^{-\alpha j}  +  C_3C_\gamma L^2  \sum_{\ell\in\mathbb{Z}}2^{-\gamma|j-\ell|}A_\ell \\
     & = C_0C_{\alpha,\gamma}2^{-\alpha j}  +  C_3C_\gamma L^2\mathcal A_j.
  \end{align*}

  Choosing $\lambda_0$ sufficiently small yields
  \[
    A_j\le\mathcal A_j
    \le 2C_0C_{\alpha,\gamma}2^{-\alpha j},
    \qquad j\in\mathbb{Z}.
  \]
  Consequently,
  \[
    \sup_{j\ge1}2^{(1+\alpha)j}\lVert P_ju \rVert_\infty
    =
    \sup_{j\ge1}2^{\alpha j}A_j
    <\infty.
  \]
  Together with $\lVert S_0u \rVert_\infty<\infty$,
  \cite[Theorem 1.4.9]{grafakosModernFourierAnalysis2014} gives
  $u\in C^{1,\alpha}(\mathbb{R}^n)$.
  Since $u=f$ on $B(0,1/4)$, the conclusion follows.
\end{proof}

\section{The general Ahlfors regular case}\label{secgeneral}

We will prove the following.

\begin{lemma}[\bf Main Lemma]\label{mainlemma}
  Let $\Omega\subset\R^{n+1}$ be a bounded open set with $n$-Ahlfors regular boundary and let $0<\alpha<1$. Suppose that there exists some constant $C$ such that
  $$\ve_n(x,r)\leq C\,r^{\alpha}\quad \mbox{ for all $x\in\pom$ and all $0<r\leq \diam(\pom)$.}$$
  Then, for $R>0$ small enough and for every $x\in\pom$, $\pom\cap B(x,R)$ is a Lipschitz graph (possibly rotated) with slope at most $\lambda_0$.
\end{lemma}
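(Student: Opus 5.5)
We will deduce the Main Lemma from Theorem C together with a quantitative flatness argument that upgrades uniform rectifiability to small-slope Lipschitz graphs at small scales, using the power decay $\ve_n(x,r)\le Cr^\alpha$ in an essential way.

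\textbf{Step 1: two-sided corkscrews and uniform rectifiability.} Since $\ve_n(x,r)^2\le C^2 r^{2\alpha}$, the Carleson condition (2) of Theorem C holds trivially:
\[
\int_B\int_0^{r(B)}\ve_n(x,r)^2\,\tfrac{dr}{r}\,d\mu(x)\lesssim r(B)^{n+2\alpha}\lesssim r(B)^n
\]
for $r(B)\le\diam(\pom)$. Hence $\Omega$ and $\R^{n+1}\setminus\overline\Omega$ are complementary two-sided corkscrew open sets, and $\pom$ is uniformly $n$-rectifiable.

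\textbf{Step 2: uniform bilateral flatness at small scales.} We will show that $b\beta_{\pom}(x,r)\lesssim r^{\alpha'}$ for some $\alpha'>0$. First we prove a compactness statement: for every $\delta>0$ there is $\ve_0>0$ such that if $\ve_n(y,s)\le\ve_0$ for all $y\in\pom\cap B(x,2r)$ and $s\le 2r$, then $b\beta(x,r)\le\delta$. To prove it, blow up a hypothetical sequence of counterexamples. Ahlfors regularity and the corkscrew constants pass to the limit in the Hausdorff sense, and the limiting pair of open sets has $\ve_n\equiv0$ at every boundary point and scale. This forces the boundary to be a hyperplane: at every point each sphere is split into exact hemispheres, which makes the set a cone at every point, and a cone with vertex at every boundary point is a half-space. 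That contradicts $b\beta\ge\delta$, where the lower bound survives the limit because of the corkscrews and Ahlfors regularity. Since the hypothesis makes $\ve_n\le Cr^\alpha\le\ve_0$ for all $r\le R$, we obtain $b\beta_{\pom}(x,r)\le\delta$ for all $x\in\pom$ and $r\le R$.

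\textbf{Step 3: from flatness to a small-slope graph.} Small $b\beta$ alone gives only Reifenberg flatness, so the approximating planes $L_{x,r}$ may rotate slowly. To control the rotation, compare consecutive scales: the angle between $L_{x,r}$ and $L_{x,r/2}$ is $\lesssim\delta$. The key sharpening is that $\ve_n(x,r)\lesssim r^\alpha$ should force this angle to be $\lesssim r^{\alpha'}$. We will try to prove this by checking that, when $\pom\cap B(x,r)$ is $\delta$-flat, the half-space almost minimizing $\ve_n(x,r)$ has boundary close to $L_{x,r}$, up to an error of order $\ve_n(x,r)$ plus a small multiple of $\delta$. Summing over the geometric series $\sum_k (2^{-k}r)^{\alpha'}$ then bounds the total tilt across scales by $\lesssim R^{\alpha'}$. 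Because the normals converge with this control, $\pom\cap B(x,R)$ is a graph over $L_{x,R}$ of a function whose Lipschitz constant is bounded by $\delta+CR^{\alpha'}\le\lambda_0$ once $\delta$ and $R$ are small. The two-sided corkscrew condition excludes folds and holes, so the graph covers the whole projection and $\Omega$ lies on one side of it.

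\textbf{Main obstacle.} The hardest step is the control of tilt in Step 3. The coefficient $\ve_n$ sees only $\HH^n$-measure on spheres, so relating the optimal half-space in $\ve_n(x,r)$ to the geometric plane $L_{x,r}$ needs care; this is exactly where Ahlfors regularity and the corkscrews must be used quantitatively. I expect this is where an auxiliary Lipschitz-graph approximation, with the estimates transferred to it, becomes unavoidable.
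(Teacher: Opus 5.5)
\textbf{Where the proposal breaks.} The gap is in Step 3, and behind it the rate announced in Step 2. Steps 1 and 2 are broadly in line with the paper. The paper gets the two-sided corkscrews from Theorem C, and uniform smallness of $b\beta_{\pom}$ at small scales from \cref{lemreifemberg} plus the corkscrews, rather than by blow-up. In your blow-up, note that $\ve_n(x,r)=0$ only says that $S(x,r)\cap\Omega^+$ is a hemisphere whose normal may depend on $r$, so ``a cone at every point'' does not follow as stated.

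However, compactness gives only $b\beta_{\pom}(x,r)\le\delta$, not the rate $b\beta_{\pom}(x,r)\lesssim r^{\alpha'}$ you announce. Such a rate would make the tilts summable and is essentially the $C^{1,\alpha'}$ conclusion itself. So everything rests on Step 3, and Step 3 does not work:
\begin{itemize}
\item Comparing the optimal half-space of $\ve_n(x,r)$ with $L_{x,r}$ up to an error $\ve_n(x,r)+c\delta$ charges $c\delta$ at every dyadic scale, and that is not summable.
\item The optimal half-spaces at scales $r$ and $r/2$ live on different spheres, so nothing relates them to each other.
\item In fact no argument using only the coefficients at the center $x$ can give a per-scale tilt bound. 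In the plane, take $g(t)=t\,\theta(|t|)$ with $\theta(s)=\delta\sin\bigl(\tfrac\pi2\log_2\tfrac1s\bigr)$. It has slope $\lesssim\delta$ and is odd, so every circle centered at the origin meets its graph at two antipodal points and $\ve_{1,\Omega_g^+}(0,r)=0$ for all $r$. Yet the graph is $\delta$-flat at $0$ at all scales, and its best approximating lines at $0$ turn by an angle comparable to $\delta$ between consecutive dyadic scales.
\end{itemize}

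\textbf{What is missing.} You need a mechanism that uses $\ve_n$ at all centers at once and turns it into flatness with an absorbable error. In the paper this is the comparison in \cref{thm:LP-equals-a-rho,thm:difference-of-a-coeff}. For a graph of slope $L$, the radial average $a_\rho$, which is controlled by $\ve_n$, equals the Littlewood--Paley quantity $|\xi_r*g(x_0)|/r$ up to an error $L^2\sum_{h\ge1}2^{-Nh}\mathfrak b_g(x_0,2^hr)$. This error is quadratic in the slope. Combined with $\mathfrak b_k\lesssim\sum_j2^{-|j-k|}2^j\|P_ju\|_\infty$ from \cref{thm:bk-upper-bound-Aj}, it gives a closed inequality whose $\lambda_0^2$-term can be absorbed.

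To apply this to $\pom$, the paper proceeds as follows:
\begin{enumerate}
\item It argues by contradiction, taking a largest cube $Q_1\subset Q_0$ with $\angle(L_{Q_1},L_{Q_0})>\lambda_0/10$.
\item It builds a David--Semmes graph $\Gamma$ of slope $\lesssim\lambda_0$, with $|\nabla^2A|\lesssim\epsilon/\ell_1$ and $\pom$ within $C\epsilon\ell_1$ of $\Gamma$.
\item It transfers the decay of $\ve_n$ to $a_{\rho,\Omega_A^+}$ up to $\epsilon$-errors (\cref{lem7.11}).
\item It concludes $\sum_{k=k_0}^{k_1}\mathfrak b_k\lesssim 2^{-k_0\alpha}+\epsilon^{1/2}\ll\lambda_0$, which contradicts the tilt $\approx\lambda_0$ between $Q_0$ and $Q_1$.
\end{enumerate}
Your ``main obstacle'' paragraph anticipates the auxiliary graph. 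Without the quadratic-in-slope comparison and the absorption, though, the proposal does not prove the lemma.
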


Putting together the previous lemma and \cref{teolipschitz}, one easily gets \cref{teomain}.

\vv
\subsection{Reifenberg flatness}

\begin{lemma}\label{lemreifemberg}
  Let $\Omega\subset\R^{n+1}$ be a bounded open set with $n$-Ahlfors regular boundary. Let $x\in\pom$ and $0<r\leq\diam(\pom)$.
  For any $\delta>0$, there exist constants $A>1$ and $0<\rho<1$ such that if
  $$\int_0^{Ar}\ve_n(y,t)^2\,\frac{dt}t\leq \rho\quad \mbox{ for all $y\in B(x,Ar)$, $0<t\leq Ar$,}$$
  then
  $$\beta_{\pom}(x,r)\leq \delta.$$
\end{lemma}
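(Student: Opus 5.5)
We argue by contradiction and compactness. Fix $\delta>0$ and suppose the conclusion fails. Then for $A_k=k$ and $\rho_k=1/k$ there are bounded open sets $\Omega_k$ with $n$-Ahlfors regular boundaries, and points $x_k\in\pom_k$ and radii $r_k$, such that the hypothesis holds with $A_k,\rho_k$ but $\beta_{\pom_k}(x_k,r_k)>\delta$. The Ahlfors regularity constant is assumed uniform, as it is implicit in the statement. Translate and rescale so that $x_k=0$ and $r_k=1$. The coefficient $\ve_n$ is invariant under dilations and translations, and so is the measure $dt/t$. Hence the rescaled sets satisfy
$$\int_0^{k}\ve_{n,\Omega_k}(y,t)^2\,\frac{dt}{t}\le 1/k \quad\text{for all } y\in\pom_k\cap B(0,k).$$
The condition $r_k\le \diam(\pom_k)$ gives $\diam(\pom_k)\ge1$ after rescaling.

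Next we pass to limits. By uniform Ahlfors regularity, we extract a subsequence along which the following convergences hold. First, $\pom_k\to E$ in the Hausdorff sense on compact sets. Second, $\HH^n|_{\pom_k}\rightharpoonup\mu$ weakly, with $\mu$ Ahlfors regular and $\supp\mu=E$, and $0\in E$. Third, $\mathbf 1_{\Omega_k}\to \mathbf 1_{U^+}$ and $\mathbf 1_{\R^{n+1}\setminus\overline{\Omega_k}}\to\mathbf 1_{U^-}$ in $L^1_{loc}$. To get this last convergence, use the local BV compactness coming from the Ahlfors regular boundary: the perimeter of $\Omega_k$ in $B(0,R)$ is at most $\HH^n(\pom_k\cap B(0,R))\lesssim R^n$. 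The sets $U^\pm$ are disjoint and $|E|=0$ (indeed $\HH^{n+1}(E)=0$), so $U^+\cup U^-$ has full measure. We then take the limit of the smallness condition. Using Fubini in spherical coordinates, $\ve_n(y,t)^2$ integrated over $t\in(a,b)$ dominates a quantity involving $\HH^{n+1}$ of symmetric differences in annuli. Fatou's lemma and lower semicontinuity then give, for every $y\in E$ and a.e.\ $t>0$, a half-space $H^+_{y,t}$ through $y$ such that
$$\HH^n\big((S(y,t)\cap H^+_{y,t})\setminus U^+\big)+\HH^n\big((S(y,t)\cap H^-_{y,t})\setminus U^-\big)=0.$$
This limiting step is the delicate one. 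The infimum over half-spaces must be handled by compactness of the Grassmannian. Passing from $y_k\in\pom_k$ to $y\in E$ requires continuity of $\ve_n$ in the center, which holds on average via $L^1$ convergence. The plan is to work with averaged versions $\int_a^b$ over annuli, for which these continuity properties are clean.

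Finally we carry out a rigidity step. For every $y\in E$ and a.e.\ $t$, the sphere $S(y,t)$ is split, up to null sets, into two hemispheres, with the upper one in $U^+$ and the lower one in $U^-$. Integrating over $t$ and using a.e.\ continuity of the normal, the half-space depends on $t$ only in a measurable way. For two radii $t<t'$, a point in $U^+$ near the equator at radius $t$ must be consistent with the hemisphere at $t'$. We also use centers $y'\in E$ close to $y$. Together these force $U^+$ to coincide a.e.\ with a single half-space $H^+$ through $0$, and $U^-$ with $H^-$. Hence $E$, being contained in the topological boundary of these sets via Ahlfors regularity of $\mu$, equals $\partial H^+$. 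To make this rigorous, I would first show that for $y=0$ the map $t\mapsto H^+_{0,t}$ is a.e.\ constant, by comparing nearby radii: the symmetric difference of two hemispheres of nearly equal radii has positive measure unless their normals agree. This gives $U^\pm=H^\pm$ a.e., and thus $E\subset\partial H^+$. Ahlfors regularity of $E$ then excludes points of $E$ off the plane, since such a point would carry $\mu$-mass in an open region where $U^+$ or $U^-$ has full density, contradicting the fact that $\Omega_k$ and its exterior both have $\HH^{n+1}$-small boundary neighborhoods. Hausdorff convergence then gives $\beta_{\pom_k}(0,1)\to\beta_E(0,1)=0$, contradicting $\beta_{\pom_k}(0,1)>\delta$.

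The main obstacle is this last exclusion of stray boundary points of $E$ away from the plane $\partial H^+$. The sets $U^\pm$ are only determined a.e., so a thin piece of boundary could in principle be invisible to them. I would handle it with the corkscrew-type information from Theorem C. The small square function yields two-sided corkscrews uniformly in $k$, which pass to the limit and show that every point of $E$ has both $U^+$ and $U^-$ of positive density nearby. That is impossible off $\partial H^+$.
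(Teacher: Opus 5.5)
The paper does not prove this lemma itself; it only cites Lemma 6.1 of \cite{FTV2}. Your contradiction--compactness scheme is a legitimate self-contained alternative, and its first half is sound. After rescaling, the uniform perimeter bound gives $L^1_{loc}$ compactness of $\mathbf 1_{\Omega_k}$. For each fixed $y\in E$ you may pass to a $y$-dependent subsequence: the limits $U^\pm$ are already fixed, and the perimeter bound controls the shift from $y_k\in\pom_k$ to $y$. This yields that for every $y\in E$ and a.e.\ $t>0$ the sphere $S(y,t)$ is bisected, up to $\HH^n$-null sets, by a hyperplane through $y$, with one side in $U^+$ and the other in $U^-$.

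\textbf{The gap is the rigidity step}, which is where the whole content of the lemma lies. You propose to show first that $t\mapsto H^+_{0,t}$ is a.e.\ constant by comparing nearby radii at the single center $0$. This cannot work. The spheres $S(0,t)$ and $S(0,t')$ are disjoint for $t\neq t'$, so the bisection condition at $0$ imposes no relation between $H_{0,t}$ and $H_{0,t'}$. Finite perimeter of $U^+$ only makes $t\mapsto\nu_0(t)$ locally of bounded variation, with no smallness.

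A concrete example is the double logarithmic spiral $E=\{0\}\cup\{\pm t e^{i\log t}: t>0\}\subset\R^2$. It is a chord-arc curve, hence Ahlfors regular, and its two complementary components have locally finite perimeter. Each circle $S(0,t)$ meets $E$ exactly in the antipodal pair $\pm t e^{i\log t}$, so it is exactly bisected, yet the bisecting line rotates with $t$. What excludes this configuration is the bisection property at the \emph{other} points of $E$: small circles centered on the curved arms are not split into exact semicircles. So the argument must couple different centers in an essential way. Your sentence ``together these force $U^+$ to coincide a.e.\ with a single half-space'' is precisely the missing argument, not a detail.

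One possible route, sketched:
\begin{enumerate}
\item Show that $\nu_y$ has no jumps. A jump would put a spherical piece of $S(y,t)$ into $E$, and small spheres centered there are not bisected.
\item Deduce that $E\cap S(y,t)$ is exactly the great $(n-1)$-sphere orthogonal to $\nu_y(t)$.
\item Conclude that $E$ is invariant under every point reflection $w\mapsto 2y-w$ with $y\in E$, and derive from this that $E$ is a hyperplane.
\end{enumerate}

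Conversely, the issue you single out as the main obstacle is not one. Once $U^+=H^+$ a.e., a point $y\in E\setminus\partial H^+$ would have all small spheres contained a.e.\ in $U^+$ or in $U^-$. This contradicts the bisection property you already derived at every point of $E$. So Theorem C is not needed. That is fortunate, since Theorem C assumes the Carleson packing condition for all balls centered on the boundary, whereas here you only control the square function inside $B(0,k)$.
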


This follows from Lemma 6.1 in \cite{FTV2}.

\begin{lemma}
  Let $\Omega$ be a two-sided corkscrew open set with Ahlfors regular boundary. There exists $\epsilon_0>0$ depending on the two-sided corkscrew and Ahlfors regularity conditions such that, for $x\in\pom$, $0<r\leq \diam(\pom)$, if $\beta_{\pom}(x,r)\leq \epsilon$ with $\epsilon\leq \epsilon_0$, then
  $$b\beta_{\pom}(x,r/2)\leq C\epsilon.$$
\end{lemma}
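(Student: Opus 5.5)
Fix $x\in\pom$ and $0<r\leq\diam(\pom)$, and let $L$ be a hyperplane through (or near) $x$ attaining $\beta_{\pom}(x,r)\leq\epsilon$. Then $\pom\cap B(x,r)$ lies in the slab $U_\epsilon=\{y:\dist(y,L)\leq\epsilon r\}$. We must show that every point of $L\cap B(x,r/2)$ lies within $C\epsilon r$ of $\pom$. The argument is by contradiction, using the two-sided corkscrew condition to place points of $\Omega$ and of $\R^{n+1}\setminus\overline\Omega$ on both sides of the slab, and then a connectedness argument to produce a boundary point near any given point of $L$.

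\textbf{Step 1 (components of the complement of the slab).} The set $B(x,r)\setminus U_\epsilon$ has two connected components $D_1,D_2$, one on each side of $L$. Neither meets $\pom$, so each $D_i$ is contained either in $\Omega$ or in $\R^{n+1}\setminus\overline\Omega$.

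\textbf{Step 2 (the two components lie in different sets).} Apply the two-sided corkscrew condition at $x$ with radius $r/2$. This gives balls $B^+\subset\Omega\cap B(x,r/2)$ and $B^-\subset(\R^{n+1}\setminus\overline\Omega)\cap B(x,r/2)$ of radius at least $cr/2$. Choose $\epsilon_0<c/8$. Then each of $B^\pm$ is too thick to fit inside the slab, so each meets $D_1\cup D_2$. Since $D_i$ lies entirely in one of the two open sets, $B^+$ meets a component contained in $\Omega$ and $B^-$ meets a component contained in the exterior. Hence, after relabelling, $D_1\subset\Omega$ and $D_2\subset\R^{n+1}\setminus\overline\Omega$.

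\textbf{Step 3 (every point of $L$ is close to $\pom$).} Take $z\in L\cap B(x,r/2)$ and consider the segment through $z$ orthogonal to $L$, of half-length $2\epsilon r$. Its two endpoints lie in $D_1$ and $D_2$ respectively; they remain inside $B(x,r)$ because $r/2+2\epsilon r<r$. The segment is connected and joins a point of $\Omega$ to a point of $\R^{n+1}\setminus\overline\Omega$, so it must cross $\pom$. Therefore $\dist(z,\pom)\leq2\epsilon r$.

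\textbf{Step 4 (conclusion).} Rescaling to the ball $B(x,r/2)$, the first term of $b\beta$ is at most $2\epsilon$ and the second at most $4\epsilon$. Thus $b\beta_{\pom}(x,r/2)\leq C\epsilon$.

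The main obstacle is Step 2. A priori both components could lie in $\Omega$, which is exactly what happens when $\Omega$ has a thin crack. The two-sided corkscrew condition excludes this, but only once the corkscrew balls are forced to leave the slab; that is why $\epsilon_0$ must depend on the corkscrew constant. Ahlfors regularity is not needed here beyond what the setting already provides. If $L$ is only known to pass near $x$ rather than through it, the same argument works after shifting $L$ by at most $\epsilon r$.
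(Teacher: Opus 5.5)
Your proof is correct and follows essentially the same route as the paper. The paper also forces the two components of $B(x,r)$ minus the slab into $\Omega$ and $\R^{n+1}\setminus\overline\Omega$ respectively using the two-sided corkscrew balls, then runs an orthogonal segment through each point of $L\cap B(x,r/2)$ to find a point of $\pom$. The differences are cosmetic: you apply the corkscrew condition at radius $r/2$ instead of $r$, and you use a slab of half-width $\epsilon r$ instead of $2\epsilon r$ around a plane through $x$.
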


\begin{proof}
  Let $L$ be an $n$-plane passing through $x$ such that $\dist(y,L)\leq 2\epsilon r$ for all $y\in B(x,r)$. Equivalently, $\pom\cap B(x,r)
    \subset F_L$, where $F_L$ is the closed $(2\epsilon r)$-neighborhood of $L$. We claim one of the components of $B(x,r)\setminus F_L$, which we denote by $V^+$, is
  contained in $\Omega^+$ and the other, which we denote by $V^-$, is contained in $\Omega^-$.

  Indeed, since $V^\pm$ are connected, $V^+$ is contained in
  either $\Omega^+$ or $\Omega^-$, and the same holds for $V^-$.
  By the two-sided corkscrew condition, there exist balls $B^\pm$
  contained in $\Omega^\pm\cap B(x,r)$ with radius $r(B^\pm)\geq c_0 r$. Assuming $2\epsilon\leq c_0/2$, it follows that both balls $B^\pm$ intersect $B(x,r)
    \setminus F_L$. So $V^+\cup V^-$ intersects both $\Omega^+$ and $\Omega^-$. This implies that one component, say $V^+$, is contained in $\Omega^+$, and the other, $V^-$, in $\Omega^-$.

  To see that $b\beta_\pom(x,r/2)\leq C\epsilon$, for a point $x\in L\cap B(x,r/2)$ let $S_x$ be a segment orthogonal to $L$ with length
  $5\epsilon r$, whose mid-point is $x$. Then one of the end-points is contained in $V^+$ and the other in $V^-$. Since $V^\pm\subset
    \Omega^\pm$, there exists a point $z_x\in S_x\cap\pom$. So
  $$\dist(x,\pom)\leq \frac12 \,\HH^1(S_x) = \frac52\,\epsilon r.$$
\end{proof}

\begin{coro}
  Let $\Omega\subset\R^{n+1}$ be a bounded open set with $n$-Ahlfors regular boundary and let $0<\alpha<1$. Suppose that there exists some constant $C$ such that
  $$\ve_n(x,r)\leq C\,r^{\alpha}\quad \mbox{ for all $x\in\pom$ and all $0<r\leq \diam(\pom)$.}$$
  Then, for any $\delta>0$, there exists $r_0>0$ depending on $\delta$ such that $b\beta_{\pom}(x,r)\leq \delta$ for all $x\in\pom$ and $0<r\leq r_0$.
\end{coro}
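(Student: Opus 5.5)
The plan is to chain \cref{lemreifemberg}, Theorem C and the preceding lemma on two-sided corkscrew sets.

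\emph{Step 1: small square function at small scales.} From the hypothesis $\ve_n(y,t)\leq C t^\alpha$ we get, for every $y\in\pom$ and $0<s\leq\diam(\pom)$,
$$\int_0^{s}\ve_n(y,t)^2\,\frac{dt}{t}\leq C^2\int_0^{s} t^{2\alpha-1}\,dt=\frac{C^2}{2\alpha}\,s^{2\alpha}.$$
Fix $\epsilon>0$, to be chosen shortly (depending on $\delta$ and on $\epsilon_0$). \Cref{lemreifemberg} with this $\epsilon$ in place of $\delta$ gives constants $A$ and $\rho$. Choose $r_0$ so small that $\frac{C^2}{2\alpha}(Ar_0)^{2\alpha}\leq\rho$ and $Ar_0\leq\diam(\pom)$. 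Then for every $x\in\pom$ and $0<r\leq r_0$ the hypothesis of \cref{lemreifemberg} holds at scale $r$, since it only concerns points $y\in\pom$. The lemma yields $\beta_\pom(x,r)\leq\epsilon$ for all $x\in\pom$ and $r\leq r_0$.

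\emph{Step 2: two-sided corkscrew.} To upgrade $\beta$ to $b\beta$ we use the preceding lemma, which requires $\Omega$ to be a two-sided corkscrew open set. The same computation with $s=r(B)$ gives
$$\int_B\int_0^{r(B)}\ve_n(x,r)^2\,\frac{dr}{r}\,d\mu(x)\lesssim \frac{C^2}{\alpha}\,\diam(\pom)^{2\alpha}\,\mu(B)\lesssim r(B)^n$$
for balls centered on $\pom$ with $r(B)\leq\diam(\pom)$. For larger balls we truncate: the integrand vanishes for $x$ outside the bounded set $\pom$, and $\ve_n\leq C$ on the range $\diam(\pom)<r\leq r(B)$. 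Here $\mu=\HH^n|_\pom$ is Ahlfors regular, and $\partial\Omega^+\cup\partial\Omega^-=\pom$ up to the usual convention.

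Theorem C then says that $\Omega$ and $\R^{n+1}\setminus\overline\Omega$ are complementary two-sided corkscrew open sets, with constants depending only on $C$, $\alpha$, $\diam(\pom)$ and the Ahlfors regularity constant. This produces $\epsilon_0$. If Theorem C only gives corkscrews up to some scale, that suffices, since we only use scales $r\leq r_0$.

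\emph{Step 3: conclusion.} Take $\epsilon=\min(\epsilon_0,\delta/C)$, where $C$ is the constant of the preceding lemma. That lemma then gives $b\beta_\pom(x,r/2)\leq C\epsilon\leq\delta$ for all $x\in\pom$ and $r\leq r_0$. Hence $b\beta_\pom(x,r)\leq\delta$ for all $r\leq r_0/2$, and we rename $r_0/2$ as $r_0$.

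The main obstacle is Step 2. One must check carefully that the Carleson condition (2) of Theorem C holds at all scales, including large balls. One must also check that the boundary convention of Theorem C is compatible: $\partial(\R^{n+1}\setminus\overline\Omega)\subset\pom$, and the support of $\mu$ must equal the union of the two boundaries. Otherwise one would restrict $\mu$ or argue that $\pom=\partial\Omega^-$ using the smallness of $\ve_n$ at every boundary point. Steps 1 and 3 are routine parameter bookkeeping.
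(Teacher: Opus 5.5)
Your proposal is correct and follows the same route as the paper. The paper's one-line proof combines \cref{lemreifemberg}, the two-sided corkscrew property of $\Omega$ from Theorem C (i.e.\ \cite{CTV}), and the preceding lemma upgrading $\beta_{\pom}$ to $b\beta_{\pom}$; you have filled in the parameter choices and the Carleson-condition check it leaves implicit, and your boundary-convention worry is harmless since $\partial(\R^{n+1}\setminus\overline\Omega)\subset\pom=\supp\mu$ automatically.
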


\begin{proof}
  This follows from the previous lemmas, taking into account that $\Omega$ is a two-sided corkscrew domain, by \cite{CTV}.
\end{proof}

Note that the above result implies that $\Omega$ is vanishing Reifenberg flat.

\subsection{The cube \texorpdfstring{$Q_0$}{Q₀} and the family \texorpdfstring{$\BA(Q_0)$}{BA(Q₀)}}

From now on, we work under the assumptions of \cref{mainlemma}.

We fix some constant $\lambda_0\in (0,1/2)$ as in \cref{teolipschitz}.
For some small $\epsilon\in (0,10^{-2})$ to be chosen below, with $\epsilon\ll \lambda_0$, we consider a radius $R_0>0$ small enough so that
\begin{equation}\label{eqbeta100}
  b\beta_{\pom}(x,r)\leq \epsilon\quad \mbox{ for all $x\in\pom$ and $0<r\leq\epsilon^{-1}R_0$.}
\end{equation}
We assume that $R_0$ is of the form $R_0=2^{-k_0}$ for some $k_0\in\Z$.

We denote by $\DD_\pom$ the David-Semmes lattice associated with $\pom$. For every $Q\in \DD_{\pom,k_0}$, we let $\BA(Q)$ be the family of maximal cubes $P\in\DD_\pom$ which are contained in $Q$ and satisfy
$$\angle (L_P,L_Q)>\frac1{10} \lambda_0,$$
where $L_P$ and $L_Q$ are minimizing hyperplanes for $\beta_\pom(3B_P)$ and $\beta_\pom(3B_Q)$, respectively.
For some $K\geq100$ to be fixed below, let $\cN_K(Q)$ be the family of cubes $R\in\DD_{\pom,k_0}$ such that $\dist(Q,R)\leq K\ell(Q)$ (in a sense, this should be considered as a large family of neighbors of $Q$).
We denote
$$\wt\BA(Q) = \bigcup_{R\in \cN_K(Q)}\BA(R).$$
Our objective is to show that $\wt\BA(Q)=\varnothing$ for every $Q\in \DD_{\pom,k_0}$. By standard arguments, this ensures that $\pom$ coincides with a
Lipschitz graph with slope $\leq \lambda_0/10$ in a neighborhood of $Q$, and then by \cref{teolipschitz} this implies
that $\pom$ is a $C^{1,\alpha}$ graph in a possibly smaller neighborhood of $Q$.

We will argue by contradiction, and we will assume that $\wt\BA(Q)\neq\varnothing$ for some $Q\in\DD_{\pom,k_0}$, which is equivalent to saying that
$\BA(Q)\neq\varnothing$ for some $Q\in\DD_{\pom,k_0}$. To this end, we consider the cube $Q_0 \in\DD_{\pom,k_0}$ which contains a cube $Q_1\in\BA(Q_0)$
with largest length $\ell_1$ among all the cubes from $\bigcup_{Q\in\DD_{\pom,k_0}}\BA(Q)$.

If $Q,R\in\DD_\pom$ and $R$ is a child of $Q$, from \cref{eqbeta100} one easily deduces that
$$\angle(L_Q,L_R) \leq C\epsilon.$$
From this fact, it follows also that the angle between any $n$-plane $L_Q$, with $Q\in \cN_K(Q_0)$, and $L_{Q_0}$ is bounded above
by $C(K)\epsilon$. Also,  for any cube $R\subset Q_0$, we deduce that
$$\angle(L_R,L_{Q_0})\leq C\epsilon\, |J(R)-k_0|,$$
where $J(R)$ is the index such that $R\in\DD_{\pom,J(R)}$.
In particular, if $Q_1\in \BA(Q_0)$ with $\ell_1=\ell(Q_1)=2^{-k_1}$,
$$ \log_2\frac{\ell(Q_0)}{\ell(Q_1)} = |k_1-k_0|\gtrsim \lambda_0\epsilon^{-1}.$$
Therefore,
$$\ell_1 \leq 2^{-c\lambda_0/\epsilon}\ell(Q_0).$$

We denote
$\wt Q_0 = \bigcup_{Q\in \cN_K(Q_0)}Q$ and we write $L_0:=L_{Q_0}$.
We let
$\tree(\wt Q_0)$ be the family of cubes from $\bigcup_{Q\in\cN_K(Q_0)} \DD_\pom(Q)$ which have side length at least equal to $\ell_1$.
By the construction of $\BA(\wt Q_0)$ and since, for any $Q\in \cN_K(Q_0)$, it holds that $\angle (L_Q,L_{Q_0})\leq C\epsilon\ll \lambda_0/10$ (assuming $\epsilon$ small enough, depending on $K$),
it follows that
$$\angle(L_P,L_0)|\leq \frac19\,\lambda_0\quad \mbox{ for any $P\in\tree(\wt Q_0)$.}$$

\vv
\section{The approximating Lipschitz graph}\label{sec-Lipgraph}

In this section we will construct a Lipschitz graph $\Gamma$ which will approximate $\pom$ in a neighborhood of $4B_{Q_0}$.
We will describe a construction based on the techniques of David and Semmes in \cite{DS1} in a more general context, also used in other contexts, such
as in connection with curvature of measures and rectifiability in \cite{Tolsa-llibre}. Since these techniques are well known, we will not write all the details of the proof, and instead we will refer the reader to \cite{DS1} and \cite{Tolsa-llibre}. If preferred, one could also apply the techniques of \cite{DT}.

After a suitable affine transformation, we can assume that the hyperplane $L_0$ is horizontal and passes through the origin.
We denote by $\Pi$ the orthogonal projection of $\R^{n+1}$ onto $L_0$.
We identify $L_0$ with $\R^n$ and we denote by $\DD_{L_0}$ the family of dyadic $n$-dimensional cubes of $L_0\equiv\R^n$.
We let
$\cW$ be the family of the cubes from $\DD_{L_0}$ of side length
$\ell_1$. Also, we let $\cW_0$ be the subfamily of the cubes from $\cW$
that intersect $\frac12KB_{Q_0}$. We index the families $\cW$ and $\cW_0$ by writing $\cW=\{S_i\}_{i \in I_\cW}$ and $\cW_0=\{S_i\}_{i \in I_0}$.

\begin{lemma}\label{l:whitney-prop1}
  Suppose that $K$ is large enough (depending only on $n$ and the Ahlfors regularity constant) and $\epsilon$ is small enough.
  Then, for every $i \in I_0$ there exists a cube $P_i \in \tree(\wt Q_0)$ such that
  $$
    \ell(S_i) \lesssim \ell(P_i) \lesssim \ell(S_i)$$
  and
  $$ \dist(S_i, \Pi(P_i)) \lesssim \ell(S_i).$$
\end{lemma}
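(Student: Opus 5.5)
Fix $i\in I_0$ and let $z_i$ be the center of $S_i$, so $z_i\in L_0$ and $|z_i|\lesssim K\ell(Q_0)$, since $S_i$ meets $\frac12 KB_{Q_0}$ and $\ell(S_i)=\ell_1\ll\ell(Q_0)$. The plan is to find a point $y_i\in\pom$ whose projection $\Pi(y_i)$ lies close to $z_i$, and then take $P_i$ to be the cube of generation $k_1$ (so $\ell(P_i)=\ell_1$) that contains $y_i$. We then need three facts: $y_i$ lies in $\wt Q_0$, so that $P_i\in\tree(\wt Q_0)$; the side lengths are comparable, which holds automatically since $\ell(P_i)=\ell(S_i)=\ell_1$; and $\dist(S_i,\Pi(P_i))\leq|z_i-\Pi(y_i)|\lesssim\ell_1$.

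To find $y_i$, we use the bilateral flatness \cref{eqbeta100} at the large scale. Take $r=K\ell(Q_0)$ and apply $b\beta_\pom(x_{Q_0},r)\leq\epsilon$. This requires $r\leq\epsilon^{-1}R_0$, which holds if $K\leq\epsilon^{-1}$; that is, we choose $\epsilon$ small depending on $K$. Let $L$ be a plane realizing this. Since $L_0$ is the minimizing plane for $3B_{Q_0}$, both $L$ and $L_0$ approximate $\pom\cap 3B_{Q_0}$ within $\epsilon\ell(Q_0)$. Because $\pom\cap 3B_{Q_0}$ is spread out in $n$ directions (it contains $\pom\cap B(x_{Q_0},\ell(Q_0))$, which is Ahlfors regular with large measure), the angle between $L$ and $L_0$ is $\lesssim\epsilon$. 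Consequently the two planes stay within $C\epsilon K\ell(Q_0)$ of each other on $B(x_{Q_0},K\ell(Q_0))$. The point $z_i$ lies in $L_0$ at distance $\lesssim\epsilon K\ell(Q_0)$ from $L$. By the second term in $b\beta$, there is then some $y\in\pom$ with $|y-z_i|\lesssim\epsilon K\ell(Q_0)$.

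This bound is far too crude, since $\ell_1\leq 2^{-c\lambda_0/\epsilon}\ell(Q_0)$. The fix is to iterate the argument down the scales, and I expect this to be the main obstacle. Set $r_k=2^{-k}K\ell(Q_0)$, stopping at $r_k\approx\ell_1$. At each scale, pick $y^{(k)}\in\pom$ near the point of the current approximating plane lying above $z_i$. The displacement from the previous point is $\lesssim\epsilon r_k$, and the planes at consecutive scales differ in angle by $\lesssim\epsilon$.

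Summing these errors naively gives $|\Pi(y^{(k)})-z_i|\lesssim\epsilon\sum_{m\leq k}r_m\lesssim\epsilon K\ell(Q_0)$, which is still too big. So we instead track the vertical line $z_i+L_0^\perp$. That line meets every approximating plane $L^{(k)}$ at scale $r_k$ centered near it: each such plane has slope $\leq\lambda_0/9$ relative to $L_0$, because the corresponding cubes lie in $\tree(\wt Q_0)$. At each scale we choose $y^{(k)}$ within $\epsilon r_k$ of the point $L^{(k)}\cap(z_i+L_0^\perp)$. The horizontal error at scale $k$ is then only $\lesssim\epsilon r_k$ and does not accumulate, because every step re-centers on the fixed vertical line. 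At the final scale $r\approx\ell_1$, this yields $y_i$ with $|\Pi(y_i)-z_i|\lesssim\epsilon\ell_1$.

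Two things must be checked along the way. First, the balls $B(y^{(k)},r_k)$ must stay near $\pom$: consecutive centers are within $\lesssim r_k$ of each other, so $B(y^{(k+1)},r_{k+1})\subset B(y^{(k)},r_k)$ up to a constant factor. Second, each ball must correspond to a cube of $\tree(\wt Q_0)$, which is what gives the slope bound $\angle(L_P,L_0)\leq\lambda_0/9$; this is where the stopping at scale $\ell_1$ and the maximality of $\ell_1$ are used. Finally, we need $y_i\in\wt Q_0$. Since $|\Pi(y_i)|\lesssim K\ell(Q_0)/2$ and $\pom$ lies near $L_0$ at that scale, we get $|y_i-x_{Q_0}|\leq\frac34K\ell(Q_0)$, so $\dist(y_i,Q_0)\leq K\ell(Q_0)$. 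This places $y_i$ in some $R\in\cN_K(Q_0)$, and therefore $P_i\in\tree(\wt Q_0)$. This last step is where $K$ large relative to the Ahlfors constant and to the constant in the $\frac12K$-neighborhood is used.
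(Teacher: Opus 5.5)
Your argument is correct, but it takes a different route from the paper. The paper gives no proof of its own. It refers to \cite[Lemma 7.21]{Tolsa-llibre}, where the Whitney cubes are adapted to the regularized distance $D(z)=\inf_{x\in\Pi^{-1}(z)}\inf_{Q\in\tree}(\dist(x,Q)+\ell(Q))$, so that $\ell(S_i)\approx D$ on $S_i$, and $P_i$ is an ancestor of a tree cube almost realizing $D$ at a point of $S_i$. Here the paper simply asserts that one may take $D\equiv\ell_1$ near $\frac12KB_{Q_0}$. Since $D(z)\le\dist(z,\Pi(\wt Q_0))+\ell_1$, the real content of the lemma is that $\Pi(\wt Q_0)$ is $C\ell_1$-dense in $L_0\cap\frac12KB_{Q_0}$, and the paper leaves this implicit.

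Your descent along the fixed line $z_i+L_0^\perp$ proves exactly this density, even with error $\epsilon\ell_1$. It uses only \cref{eqbeta100} and the bound $\angle(L_P,L_0)\le\lambda_0/9$ on $\tree(\wt Q_0)$, so your version is more self-contained than the paper's. When you write it up, state explicitly that $w_k=L^{(k)}\cap(z_i+L_0^\perp)$ satisfies $|w_k-y^{(k-1)}|\lesssim\epsilon r_k<r_k$. This follows from $|\Pi(y^{(k-1)})-z_i|\le 2\epsilon r_k$, $\dist(y^{(k-1)},L^{(k)})\le\epsilon r_k$, and the bounded slope of $L^{(k)}$. It is what lets you invoke the second half of $b\beta$, and it is why the errors do not accumulate.

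Two small corrections, both harmless once $\epsilon$ is small depending on $K$. First, the plane at scale $K\ell(Q_0)$ makes an angle $\lesssim C(K)\epsilon$ with $L_0$, not $\lesssim\epsilon$. Second, for the first $\approx\log_2 K$ scales, where $r_k>\ell(Q_0)$, there are no tree cubes. The slope control there must instead come from $\angle(L_R,L_0)\le C(K)\epsilon$ for $R\in\cN_K(Q_0)$.

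A one-scale proof is also available, using the separation argument of \cref{secgeneral} and the two-sided corkscrew property from \cite{CTV}. That argument shows the two components of $B(x_{Q_0},K\ell(Q_0))$ minus the $2\epsilon K\ell(Q_0)$-neighborhood of the best plane lie in $\Omega^+$ and $\Omega^-$ respectively. Hence a segment through $z_i$ orthogonal to $L_0$, of length $C(K)\epsilon\ell(Q_0)$, joins $\Omega^+$ to $\Omega^-$. It therefore meets $\pom$ at a point $y$ with $\Pi(y)=z_i$ exactly. This avoids both the iteration and the tree slope bound, at the price of invoking two-sidedness directly rather than bilateral flatness alone.
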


This is proven in the same way as \cite[Lemma 7.21]{Tolsa-llibre}. We remark that in the constructions in \cite[Chapter 7]{Tolsa-llibre} and \cite[Chapter 8]{DS1} one needs to introduce some auxiliary Lipschitz regularizing functions $d,D:\R^{n+1}\to \R$.
In our context this is not necessary because all the stopping cubes from $\tree(\wt Q_0)$ have the same side length, and then one can take $D(x)=\ell_1$ in a  neighborhood of $\frac12KB_{Q_0}$.

For $i \in I_0$, denote by $A_i$ the affine function $L_0 \to L_0^\perp$ whose graph is the $n$-plane $L_{P_i}$ that minimizes
$\beta_\pom(3B_{P_i})$. Since  $P_i\in \tree(\wt Q_0)$, $\angle(L_{P_i}, L_0)\leq \lambda_0/2$, and thus $A_i$ is Lipschitz with constant at most $\tan(\lambda_0/10)\lesssim \lambda_0/5$.
On the other hand, for $i \in I_\cW\setminus I_0$, we put $A_i \equiv 0$.
We can now define $A$ on $L_0$.

Consider a smooth partition of unity $\{\phi_i\}_{i\in I_{\cW}}$ subordinate to $\{3S_i\}_{i\in I_{\cW}}$ in $L_0$, i.e., $\phi_i\in C^{\infty}_0(3S_i)$ with $\sum_i \phi_i\equiv 1$ on $L_0=\R^n \subset \R^{n+1}$.
We also require that, for every $i\in I_{\cW}$,
\begin{equation}\nonumber
  \|\nabla \phi_i \|_\infty \lesssim \ell_1^{-1} \quad\mbox{ and }\quad \|\nabla^2\phi_i\|_{\infty} \lesssim \ell_1^{-2}.
\end{equation}

Now,  we set
\begin{align*}
  A(p) := \sum_{i \in I_\cW} \phi_i(p) A_i(p)= \sum_{i \in I_0} \phi_i(p) A_i(p).
\end{align*}

We require the following lemma, which combines Lemmas 7.24 and 7.27 from \cite{Tolsa-llibre}.
\begin{lemma}\label{l:A}
  Suppose $K$ is large enough. Then
  the function $A: L_0 \to L_0^\perp$ is supported in $KB_{Q_0}\cap L_0$ and is Lipschitz with slope $\lesssim \lambda_0$ in $10B_{Q_0}$. Moreover, if $i\in I_{0}$, then for any $x \in 15 S_i$,
  \begin{align*}
    |\nabla^2A(x)| \lesssim \frac{\epsilon}{\ell_1}.
  \end{align*}
\end{lemma}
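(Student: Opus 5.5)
We follow the standard David--Semmes scheme (\cite[Lemmas 7.24, 7.27]{Tolsa-llibre}), using that every cube $S_i$ has the same side length $\ell_1$, so that $D\equiv\ell_1$ and no regularizing functions are needed.

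\emph{Support.} For $i\in I_\cW\setminus I_0$ we have $A_i\equiv0$, and $\supp\phi_i\subset 3S_i$. Any $S_i$ with $i\in I_0$ meets $\frac12KB_{Q_0}$ and has side $\ell_1\ll\ell(Q_0)$. Hence $3S_i\subset KB_{Q_0}$, and so $\supp A\subset KB_{Q_0}\cap L_0$.

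\emph{Compatibility of neighbouring planes.} This is the key estimate. Take $i,j\in I_0$ with $3S_i\cap 3S_j\neq\varnothing$ and $x\in 15S_i$. By \cref{l:whitney-prop1}, $P_i$ and $P_j$ have side comparable to $\ell_1$ and lie within distance $\lesssim\ell_1$ of each other. Therefore there is a cube $P\in\tree(\wt Q_0)$, or an ancestor within a bounded number of generations (all still in the tree since $\ell(Q_0)\gg\ell_1$), with $\ell(P)\approx\ell_1$ and $3B_{P_i}\cup 3B_{P_j}\subset 3B_P$. Using \cref{eqbeta100}, $\beta_\pom(3B_P)\le C\epsilon$. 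Ahlfors regularity gives $n+1$ points of $\pom$ in $3B_{P_i}$ that are well spread in $L_{P_i}$-directions: there is no mass concentration near an $(n-1)$-plane. From this we get $\dist_H(L_{P_i}\cap B,L_P\cap B)\lesssim\epsilon\ell_1$ on any ball $B$ of radius $\approx C\ell_1$ near $P_i$, and the same for $j$. Hence
\[
  \|A_i-A_j\|_{L^\infty(30S_i)}\lesssim\epsilon\ell_1,\qquad |\nabla A_i-\nabla A_j|\lesssim\epsilon .
\]

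\emph{Second derivatives.} On $15S_i$ only boundedly many $\phi_j$ are nonzero, and $\sum_j\phi_j\equiv1$. So we may write $A=A_i+\sum_j\phi_j(A_j-A_i)$. Since $A_j,A_i$ are affine, $\nabla^2(A_j-A_i)=0$. Then
\[
  |\nabla^2A|\lesssim\sum_j\Big(|\nabla^2\phi_j|\,|A_j-A_i|+|\nabla\phi_j|\,|\nabla(A_j-A_i)|\Big)\lesssim \ell_1^{-2}\epsilon\ell_1+\ell_1^{-1}\epsilon=\frac{\epsilon}{\ell_1}.
\]
For $j\notin I_0$ we have $A_j=0$. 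If such $j$ appear near $15S_i$, we need $|A_i|\lesssim\epsilon\ell_1$ there. This is the delicate point. We would instead restrict the $j$'s actually contributing: for $x\in15S_i$ with $i\in I_0$, the cubes meeting $x$ lie within $\approx\ell_1$ of $\frac12KB_{Q_0}$. If needed, we enlarge $I_0$ slightly (cubes meeting $\frac34KB_{Q_0}$) so that all neighbours of $15S_i$ are in $I_0$. We expect this bookkeeping to be the main obstacle.

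\emph{Lipschitz bound in $10B_{Q_0}$.} On $10B_{Q_0}$ all relevant indices are in $I_0$. Writing $\nabla A=\nabla A_i+\sum_j\nabla\phi_j(A_j-A_i)+\sum_j\phi_j\nabla(A_j-A_i)$, we get $|\nabla A|\le|\nabla A_i|+C\epsilon$. Since $|\nabla A_i|\le\tan(\lambda_0/10)$ by the angle control in the tree, and $\epsilon\ll\lambda_0$, the slope is $\lesssim\lambda_0$.
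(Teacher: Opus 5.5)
Your route is the paper's own. The paper gives no argument beyond citing \cite[Lemmas 7.24, 7.27]{Tolsa-llibre} with $D\equiv\ell_1$. The following parts of your proposal are fine: the support argument, the compatibility bounds $\|A_i-A_j\|_{L^\infty(30S_i)}\lesssim\epsilon\ell_1$ and $|\nabla A_i-\nabla A_j|\lesssim\epsilon$ for neighbouring $i,j\in I_0$, the identity $A=A_i+\sum_j\phi_j(A_j-A_i)$, and the Lipschitz bound in $10B_{Q_0}$.

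The gap is the point you defer as bookkeeping. It is not bookkeeping, and enlarging $I_0$ does not close it. Take $i\in I_0$ with $S_i$ also meeting $L_0\setminus\frac12KB_{Q_0}$. Then $15S_i$ contains a cube of side $\ell_1$ inside the disc on which $|A-A_i|\lesssim\epsilon\ell_1$. It also contains another such cube outside, disjoint from every $3S_k$ with $k\in I_0$, on which $A\equiv0$. A bound $|\nabla^2A|\lesssim\epsilon/\ell_1$ on the convex set $15S_i$ would make $A$ $C\epsilon\ell_1$-close to a single affine function there. That would force $|A_i|\lesssim\epsilon\ell_1$ on $15S_i$.

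But $|A_i|$ there is essentially the height of $\pom$ above $L_0$ at horizontal distance $\approx\frac12K\ell(Q_0)$. For that height the construction only gives the bound $\lesssim\epsilon\ell(Q_0)$ (\cref{l:GdistL0}). For example, a boundary of curvature $\approx\epsilon^2/\ell(Q_0)$ is compatible with \eqref{eqbeta100} and has height $\approx\epsilon^2K^2\ell(Q_0)$ there. This is far above $\epsilon\ell_1\leq\epsilon\,2^{-c\lambda_0/\epsilon}\ell(Q_0)$.

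So with all cubes of side $\ell_1$, the Hessian bound as stated for every $i\in I_0$ fails in general at the edge of $I_0$. As I recall the variable-size Whitney construction in \cite{Tolsa-llibre}, it avoids this because the cubes grow to the top scale near the edge of the support, giving $|A_i|\lesssim\epsilon\ell(S_i)$ there. Enlarging $I_0$ to the cubes meeting $\frac34KB_{Q_0}$ does not help either way. If you claim the bound for all the new indices, the same failure just moves to the new edge. If you claim it only for the old indices, you have changed the function $A$.

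The correct repair is to restrict the second-derivative estimate to interior indices. For example, take $i\in I_0$ with $S_i\cap10B_{Q_0}\neq\varnothing$, or more generally with $\dist(15S_i,L_0\setminus\frac12KB_{Q_0})\geq C\ell_1$ for a suitable dimensional $C$. For these $i$, every $j$ with $\phi_j\not\equiv0$ on $15S_i$ lies in $I_0$, and your computation applies verbatim to the paper's $A$. This restricted version is also all the paper uses. \Cref{coroGamma} needs the bound only on $\Pi(6B_{Q_0})$, and \cref{lembkfac}(b) needs it only on $\supp\chi\subset4B_{Q_0}$, outside of which $u=\chi A$ vanishes.
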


We will denote the graph of $A$ by $\Gamma$, that is
\begin{align}\label{e:graphA}
  \Gamma := \mathrm{Graph}(A)=\{ (x, A(x)) \, |\, x \in L_0\}.
\end{align}

\vv

The next three lemmas are similar to Lemmas 7.29, 7.30, and 7.32 from \cite{Tolsa-llibre}. For them to be valid, we need to assume $K$ large enough and $\epsilon$ small enough, as above.

\begin{lemma}\label{l:distQ-L}
  For $Q\in \tree(\wt Q_0)$ such that $Q\cap 10B_{Q_0}\neq\varnothing$ and $x\in \Gamma\cap 3B_Q$, it holds that
  \begin{align}\label{e:distQ-L}
    \dist(x,L_Q) \lesssim \epsilon \ell(Q).
  \end{align}
\end{lemma}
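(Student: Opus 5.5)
The plan is to compare $\Gamma$ near $3B_Q$ with the local affine pieces $A_i$. For $x=(p,A(p))\in\Gamma\cap 3B_Q$, the value $A(p)=\sum_{i\in I_0}\phi_i(p)A_i(p)$ is a convex combination of the $A_i(p)$ with $p\in 3S_i$. It therefore suffices to show that for each such $i$ the point $(p,A_i(p))\in L_{P_i}$ lies within distance $\lesssim\epsilon\ell(Q)$ of $L_Q$. The function $y\mapsto \dist(y,L_Q)$ is, along vertical lines, the absolute value of an affine function (up to the factor $\cos\angle(L_Q,L_0)\approx1$), because $L_Q$ is a graph over $L_0$ with small slope. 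Hence the distance of the convex combination is controlled by the maximum of the distances of the individual points. First I note that $p\in 10B_{Q_0}$-type region forces $i\in I_0$, so that $A_i$ comes from a cube $P_i\in\tree(\wt Q_0)$, as given by \cref{l:whitney-prop1}.

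The key step compares $L_{P_i}$ with $L_Q$ inside a ball of size $\approx\ell(Q)$. I will use \cref{l:whitney-prop1}: $\ell(P_i)\approx\ell_1\le\ell(Q)$ and $\dist(S_i,\Pi(P_i))\lesssim\ell_1$. Since $\pom$ is close to $L_0$ at these scales by \eqref{eqbeta100} and the small-angle property of the tree, $P_i$ lies within $C\ell_1$ (in $\R^{n+1}$) of the point $(p,A(p))$, and hence near $3B_Q$. I then choose the ancestor $\hat P$ of $P_i$ (or a cube of the tree near $Q$) with $\ell(\hat P)\approx\ell(Q)$ and $3B_{P_i}\subset C'B_{\hat P}$. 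By the standard chain argument used in \cref{secgeneral}, combining \eqref{eqbeta100} with Ahlfors regularity (which gives many points of $\pom$ in $3B_P$ that are well spread in $n$ directions, so that the minimizing planes of nested or neighboring comparable balls are close), I get that for $y\in L_{P}\cap CB_P$ one has $\dist(y,L_R)\lesssim\epsilon\ell(R)$ whenever $P\subset R$ are tree cubes with $B_P$ near $B_R$ (more precisely, $\dist(y,L_R)\lesssim\epsilon\,\ell(R)$ even across many generations).

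The main obstacle is the accumulation across the $\log(\ell(Q)/\ell_1)$ generations. Summing the angle bounds $C\epsilon$ per generation would give a loss of $\epsilon\log$, which is too crude. To avoid it, I will instead estimate the distance directly. Points of $\pom\cap 3B_{P_i}$ lie within $\epsilon\ell(P_i)$ of $L_{P_i}$ and within $\epsilon\ell(Q)$ of $L_Q$ (as $3B_{P_i}\subset 3B_Q$ enlarged, using the $b\beta$ bound at scale $\ell(Q)$). By Ahlfors regularity these points span $L_{P_i}$ well in $3B_{P_i}$: there are $n+1$ of them forming a simplex of size $\gtrsim\ell(P_i)$. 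The point $(p,A_i(p))$ is within $C\ell_1$ of that simplex, and $L_{P_i}$ deviates by less than $\lambda_0$ in angle. This yields $\dist((p,A_i(p)),L_Q)\lesssim\epsilon\ell(Q)+\epsilon\ell(P_i)+\lambda_0\cdot C\ell_1\cdot(\text{angle between }L_{P_i}\text{ and }L_Q)$. The last term is $\lesssim \ell_1\angle(L_{P_i},L_Q)$, and since $\angle(L_{P_i},L_Q)\lesssim \lambda_0$ only, I need an additional bound. For this I use the $b\beta$ estimate at the intermediate scale $C\ell_1$: both planes approximate $\pom$ in $B((p,A(p)),C\ell_1)$ to precision $\epsilon\ell(Q)$, which directly gives $\dist\lesssim\epsilon\ell(Q)$ on that ball. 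Together with the convexity reduction, this proves \eqref{e:distQ-L}.
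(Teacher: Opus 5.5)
Your proposal is correct and is essentially the standard argument the paper defers to (\cite[Lemma 7.29]{Tolsa-llibre}): write $\dist(x,L_Q)\approx|A(p)-A_Q(p)|\le\max_i|A_i(p)-A_Q(p)|$, where $A_Q$ is the affine function whose graph is $L_Q$ and the maximum runs over $i\in I_0$ with $p\in 3S_i$; then place $P_i$ inside $CB_Q$ using \cref{l:whitney-prop1}, and compare $L_{P_i}$ with $L_Q$ through $n+1$ well-spread points of $\pom\cap B_{P_i}$ given by Ahlfors regularity.

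Two small corrections. First, the vertical localization of $P_i$ does not follow from closeness to $L_0$, which only holds up to $\epsilon\ell(Q_0)$. It follows from the graph-like bound $|y_{n+1}-z_{n+1}|\lesssim\lambda_0|\Pi(y)-\Pi(z)|+\ell_1$ for $y,z\in\pom\cap 20B_{Q_0}$, obtained by applying the tree angle condition to a cube of side $\approx|y-z|$ (then take $y\in P_i$, $z=x_Q$). Second, your detour through the angle term is unnecessary. Having $|A_i-A_Q|\lesssim\epsilon\ell(Q)$ at the projections of those $n+1$ points, which form a non-degenerate simplex of size $\approx\ell_1$, already forces $|\nabla(A_i-A_Q)|\lesssim\epsilon\ell(Q)/\ell_1$, and hence $|A_i(p)-A_Q(p)|\lesssim\epsilon\ell(Q)$ at distance $\lesssim\ell_1$ from the simplex.
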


\begin{lemma}\label{l:distG-A}
  We have    \begin{align*}
    \dist(x, \Gamma) \lesssim \epsilon \ell_1\quad\text{for every }x\in \pom\cap 10B_{Q_0}.
  \end{align*}
\end{lemma}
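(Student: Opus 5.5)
The estimate is local, so the plan is to take a point of $\pom$ near $Q_0$, pick a tree cube of the right size above it, and compare $\Gamma$ with $\pom$ through the plane of that cube. Fix $x\in\pom\cap 10B_{Q_0}$ and let $p=\Pi(x)\in L_0$.

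First, I would choose $i$ with $p\in S_i$. Since $|p|\lesssim \ell(Q_0)\ll K\ell(Q_0)$, the cube $S_i$ meets $\frac12 KB_{Q_0}$, so $i\in I_0$. By \cref{l:whitney-prop1} there is $P_i\in\tree(\wt Q_0)$ with $\ell(P_i)\approx\ell_1$ and $\dist(S_i,\Pi(P_i))\lesssim\ell_1$.

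Next, I would locate a cube containing $x$. Let $R$ be the cube of $\DD_\pom$ of side length comparable to $\ell_1$ (at least $\ell_1$) that contains $x$. The cube $R$ lies in some $Q\in\cN_K(Q_0)$, because $x\in10B_{Q_0}$ and $K\ge100$. Hence $R\in\tree(\wt Q_0)$, and we may apply \eqref{eqbeta100} to $3B_R$.

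The key geometric step is to show that $x$ lies within $C\ell_1$ of $P_i$, using the flatness of the tree. All planes $L_P$ with $P\in\tree(\wt Q_0)$ make angle at most $\lambda_0/9$ with $L_0$. Combined with $b\beta\le\epsilon$ at all scales up to $\epsilon^{-1}R_0$, this makes $\pom\cap 20B_{Q_0}$ a set whose projection $\Pi$ is bi-Lipschitz at scales $\ge\ell_1$ up to an additive error $C\epsilon\ell$. This is the standard fact from \cite{Tolsa-llibre}. Concretely, any two points $y,z\in\pom\cap 20B_{Q_0}$ with $|y-z|\ge\ell_1$ satisfy $|y-z|\lesssim|\Pi(y)-\Pi(z)|$. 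To see this, take the smallest tree cube whose ball $3B$ contains both points; its side is $\approx|y-z|$, and both points lie within $\epsilon|y-z|$ of its plane, which has slope $\le\lambda_0$. Now take $y=x$ and $z=z_{P_i}$, the center of $P_i$. Their projections are at distance $\lesssim\ell_1$, so $|x-z_{P_i}|\lesssim\ell_1$. Hence $x\in 3B_{\wt P}$ for an ancestor $\wt P$ of $P_i$ with $\ell(\wt P)\approx\ell_1$, and $\wt P$ still belongs to the tree. Since $\ell(\wt P)\approx\ell(P_i)$ and they are nested, $\angle(L_{\wt P},L_{P_i})\lesssim\epsilon$ and $\dist$ between the two planes inside $3B_{\wt P}$ is $\lesssim\epsilon\ell_1$. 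The definition of $\beta_\pom(3B_{\wt P})\le\epsilon$ then gives $\dist(x,L_{P_i})\lesssim\epsilon\ell_1$.

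Finally, I would compare $L_{P_i}$ with $\Gamma$ near $p$. The point $(p,A(p))\in\Gamma$ lies in $3B_{\wt P}$ after possibly enlarging by a fixed factor, since $|A(p)|\lesssim\lambda_0\ell(Q_0)$ and the same holds for the heights of nearby points of $\pom$. I would apply \cref{l:distQ-L} to $\wt P$, or directly to $P_i$ after enlarging $3B$ by a fixed factor, which is allowed by the same proof. This gives $\dist((p,A(p)),L_{P_i})\lesssim\epsilon\ell_1$. Both $x$ and $(p,A(p))$ lie within $C\epsilon\ell_1$ of the plane $L_{P_i}$, which has slope $\lesssim\lambda_0<1$, and their projections agree. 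Therefore $|x-(p,A(p))|\lesssim\epsilon\ell_1$, and so $\dist(x,\Gamma)\lesssim\epsilon\ell_1$.

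I expect the main obstacle to be the third step: guaranteeing that $x$ lies in the ball $3B$ of a tree cube of size $\approx\ell_1$ lying above $p$. This is where the no-overhang (bi-Lipschitz projection) property is essential. I would prove it by the David–Semmes argument: go up the tree to the scale $|y-z|$, where the angle control $\lambda_0/9$ and $\beta\le\epsilon$ force the vertical separation to be at most $(\lambda_0+C\epsilon)|y-z|$.
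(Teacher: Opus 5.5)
There is a genuine gap, and it sits in your final step. Your no-overhang step is essentially right. For pairs with $|y-z|>\ell(Q_0)$ there is no tree cube of that size, but there you can use $b\beta_{\pom}\le\epsilon$ at scale $\approx 40\ell(Q_0)$ with a plane at angle $\lesssim\epsilon$ to $L_0$. With that, the step gives $|x-z_{P_i}|\lesssim\ell_1$ and $\dist(x,L_{P_i})\lesssim\epsilon\ell_1$.

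The problem is what follows. Write $x=(p,t)$. To apply \cref{l:distQ-L} to $(p,A(p))$ you need this point in a fixed dilate of $B_{\wt P}$, i.e.\ $|A(p)-t|\lesssim\ell_1$. Your reason is that both $|A(p)|$ and $|t|$ are $\lesssim\lambda_0\ell(Q_0)$ (even $\lesssim\epsilon\ell(Q_0)$ by \cref{l:GdistL0}). That only bounds $|A(p)-t|$ by a multiple of $\ell(Q_0)$. Since $\ell_1\le 2^{-c\lambda_0/\epsilon}\ell(Q_0)$, the required dilation factor is of order $\ell(Q_0)/\ell_1$, not a constant. Applying \cref{l:distQ-L} at the scale where the point is actually known to lie only gives an error $\epsilon\ell(Q_0)$. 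In effect you assume a weak form of the lemma itself, namely that $\Gamma$ passes within $O(\ell_1)$ of $x$ above $p$. Your no-overhang argument locates $\pom$, but $A(p)$ is determined by all the planes $L_{P_j}$ with $p\in 3S_j$, and nothing in your argument ties these planes to $x$.

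You can close the gap in either of two ways.

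(i) Use the construction of $A$ directly. Fix a tree cube $T$ with $\ell(T)\approx\ell_1$ (a bounded-generation ancestor of $P_i$), and let $a_T$ be the affine function whose graph is $L_T$. Then $A(p)-a_T(p)=\sum_j\phi_j(p)\,(A_j(p)-a_T(p))$, summed over the boundedly many $j$ with $p\in3S_j$, all of which lie in $I_0$. For each such $j$, \cref{l:whitney-prop1} gives $|\Pi(z_{P_j})-p|\lesssim\ell_1$, so your no-overhang estimate gives $|z_{P_j}-x|\lesssim\ell_1$. Take $T$ large enough that $x\in3B_T$ and $3B_{P_j}\subset3B_T$ for all these $j$. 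Your plane-comparison argument then gives $|A_j(p)-a_T(p)|\lesssim\epsilon\ell_1$; it needs only $3B_{P_j}\subset 3B_T$ and $\ell(P_j)\approx\ell(T)$, not nesting. Hence $|A(p)-a_T(p)|\lesssim\epsilon\ell_1$, and together with $|t-a_T(p)|\lesssim\epsilon\ell_1$ this proves the lemma.

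(ii) Alternatively, descend through scales; this makes the no-overhang step unnecessary. Let $T_m\ni x$ be the cubes of $\DD_\pom$ with $\ell(T_m)=2^m\ell_1$, $0\le m\le k_1-k_0$.
\begin{itemize}
\item At the top, $m=k_1-k_0$: $(p,A(p))\in3B_{T_m}$ by \cref{l:GdistL0} and the flatness of $\pom$ near $L_0$.
\item Inductive step: if $(p,A(p))\in3B_{T_m}$, then \cref{l:distQ-L}, together with $\dist(x,L_{T_m})\le3\epsilon\ell(T_m)$ and the small slope of $L_{T_m}$, gives $|x-(p,A(p))|\lesssim\epsilon\ell(T_m)<\ell(T_{m-1})$. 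Hence $(p,A(p))\in3B_{T_{m-1}}$.
\item At $m=0$ this gives $|x-(p,A(p))|\lesssim\epsilon\ell_1$.
\end{itemize}
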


\Cref{l:distG-A,l:distQ-L} combine to yield the following statement.

\begin{coro}\label{l:LBcloseGA}
  For $Q\in \tree(\wt Q_0)$  such that $Q\cap 10B_{Q_0}\neq\varnothing$, it holds that
  \begin{align}
    \dist(x, \Gamma)\lesssim \epsilon \ell(Q) \quad \text{for all }x\in L_Q\cap 2B_Q.
  \end{align}
\end{coro}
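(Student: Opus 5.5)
We need to prove Corollary \ref{l:LBcloseGA}: for $Q\in\tree(\wt Q_0)$ with $Q\cap 10B_{Q_0}\neq\varnothing$ and $x\in L_Q\cap 2B_Q$, we have $\dist(x,\Gamma)\lesssim\epsilon\ell(Q)$.

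The plan is to pass from $x\in L_Q$ to a nearby point of $\pom$, then from $\pom$ to $\Gamma$. Three steps do this.

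\emph{Step 1: find a point $y\in\pom$ close to $x$.} By \cref{eqbeta100}, since $\ell(Q)\lesssim R_0$ and $\epsilon$ is small, the bilateral coefficient satisfies $b\beta_\pom(x_Q,3\ell(Q))\leq\epsilon$. Let $L'$ be an almost minimizing plane for this bilateral coefficient.

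\begin{itemize}
\item The ball $3B_Q$ contains a portion of $\pom$ of measure $\approx\ell(Q)^n$, by Ahlfors regularity.
\item $L_Q$ minimizes $\beta_\pom(3B_Q)$, and this value is at most $\epsilon$.
\item Both $L_Q$ and $L'$ are $\epsilon\ell(Q)$-close to $\pom\cap 3B_Q$. This set is not contained in an $\epsilon\ell(Q)$-neighborhood of any $(n-1)$-plane (again by Ahlfors regularity, or by the bilateral condition).
\item Hence $L_Q$ and $L'$ are $C\epsilon\ell(Q)$-close in $2B_Q$.
\end{itemize}

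Consequently, for $x\in L_Q\cap 2B_Q$ there is $x'\in L'\cap 3B_Q$ with $|x-x'|\lesssim\epsilon\ell(Q)$. The bilateral condition then gives $y\in\pom$ with $|x'-y|\leq 3\epsilon\ell(Q)$. So $|x-y|\lesssim\epsilon\ell(Q)$ and $y\in\pom\cap 3B_Q$.

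\emph{Step 2: from $\pom$ to $\Gamma$.} We must check that $y\in 10B_{Q_0}$. This holds because $Q\cap 10B_{Q_0}\neq\varnothing$, provided $\ell(Q)$ is small compared with $\ell(Q_0)$. If $Q$ is comparable in size to $Q_0$, we instead enlarge the ball in \cref{l:distG-A}; its proof works equally well in $20B_{Q_0}$, as long as $K$ is large. Alternatively, we restrict $x$ to the part of $2B_Q$ lying near $10B_{Q_0}$, which is the only part used later. Then \cref{l:distG-A} gives
$$\dist(y,\Gamma)\lesssim\epsilon\ell_1.$$

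\emph{Step 3: conclude.} Since $Q\in\tree(\wt Q_0)$, we have $\ell_1\leq\ell(Q)$. Therefore
$$\dist(x,\Gamma)\leq|x-y|+\dist(y,\Gamma)\lesssim\epsilon\ell(Q)+\epsilon\ell_1\lesssim\epsilon\ell(Q).$$

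\emph{Main obstacle.} The delicate point is Step 1, namely comparing $L_Q$, which minimizes only the unilateral $\beta$, with a bilateral plane. Ahlfors regularity handles this: the points of $\pom\cap 3B_Q$ lie within $\epsilon\ell(Q)$ of both planes and are well spread in $n$ directions, so the two planes must be close in $2B_Q$.

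A secondary point is the containment in $10B_{Q_0}$. If it is not directly available, the fallback is to use \cref{l:distQ-L} instead. Since $\Gamma$ is a graph over $L_0$ with small slope, the vertical line through $\Pi(x)$ meets $\Gamma$ at some point $z\in3B_Q$. That lemma gives $\dist(z,L_Q)\lesssim\epsilon\ell(Q)$. As $L_Q$ also has small slope, this yields $|x-z|\lesssim\epsilon\ell(Q)$ directly.
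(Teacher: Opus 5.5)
Your Step 1 is fine, since the bilateral condition makes the comparison of $L_Q$ with $L'$ straightforward. The gap is in Step 2. \Cref{l:distG-A} is only available on $\pom\cap 10B_{Q_0}$, and the point $y\in 3B_Q$ you produce need not lie there. The hypothesis only says that $Q$ \emph{meets} $10B_{Q_0}$, so even a very small cube lying mostly outside $10B_{Q_0}$ can give $y\notin 10B_{Q_0}$. Your claim that $\ell(Q)\ll\ell(Q_0)$ fixes this is false. Enlarging the ball in \cref{l:distG-A} may well be possible, but that is an unproved change to a lemma whose proof is only cited. Restricting $x$ instead changes the statement.

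Your fallback does not close the gap either. The small slope of $\Gamma$ does not tell you that the vertical line through $\Pi(x)$ meets $\Gamma$ inside $3B_Q$. A priori you only control heights at scale $\ell(Q_0)$ (\cref{l:GdistL0}), and $\epsilon\ell(Q_0)$ can be much larger than $\ell(Q)$ when $\ell(Q)$ is close to $\ell_1$. If one grants \cref{l:distG-A} on a larger ball, your three steps do prove the statement. As the lemmas are stated, they do not.

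The missing idea is what the paper means by combining \cref{l:distG-A,l:distQ-L}. You use \cref{l:distG-A} at a single anchor point, and then \cref{l:distQ-L}, which is stated for all of $\Gamma\cap 3B_Q$, spreads the estimate over the whole ball.

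\begin{itemize}
\item Take $q\in Q\cap 10B_{Q_0}$. \Cref{l:distG-A} gives $w\in\Gamma$ with $|w-q|\lesssim\epsilon\ell_1\le\epsilon\ell(Q)$, so $w\in 2B_Q$.
\item Write $L_Q$ as the graph of an affine map $A_Q$ over $L_0$. Let $h=A-A_Q$ on the connected set $D=\Pi(L_Q\cap 2B_Q)$.
\item If $p\in D$ and $|h(p)|<\ell(Q)/2$, then $(p,A(p))\in 3B_Q$. So \cref{l:distQ-L}, together with the small angle between $L_Q$ and $L_0$, gives $|h(p)|\le C\epsilon\ell(Q)<\ell(Q)/4$.
\item Hence $U=\{p\in D:|h(p)|<\ell(Q)/2\}$ is open and relatively closed in $D$.
\item $U$ is nonempty: \cref{l:distQ-L} at $w\in 3B_Q$ gives $|h(\Pi(w))|\lesssim\epsilon\ell(Q)$. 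Thus $\Pi(w)\in D$ and $\Pi(w)\in U$.
\item By connectedness $U=D$. For $x\in L_Q\cap 2B_Q$ this gives $|x-(\Pi(x),A(\Pi(x)))|=|h(\Pi(x))|\lesssim\epsilon\ell(Q)$.
\end{itemize}

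This uses both lemmas exactly as stated, and it makes your plane comparison in Step 1 unnecessary.
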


\begin{lemma}\label{l:GdistL0}
  We have
  $$\dist(x,L_0)\lesssim \,\epsilon\,\ell(Q_0)
    \quad \text{for all $x\in \Gamma$.}$$
\end{lemma}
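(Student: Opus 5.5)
The plan is to use the explicit form of $A$. After the affine normalization, $L_0$ is horizontal and passes through the origin, so $\dist(x,L_0)=|A(p)|$ for $x=(p,A(p))\in\Gamma$. It therefore suffices to show that $|A(p)|\lesssim\epsilon\,\ell(Q_0)$ for all $p\in L_0$.

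If $p\notin\bigcup_{i\in I_0}3S_i$, then $A(p)=0$, because every nonzero summand has $i\in I_0$. Otherwise
$$|A(p)|\leq \sum_{i\in I_0}\phi_i(p)\,|A_i(p)|\leq \max_{i\in I_0:\,p\in 3S_i}|A_i(p)|,$$
so it is enough to prove $|A_i(p)|\lesssim \epsilon\,\ell(Q_0)$ for $p\in 3S_i$ and $i\in I_0$. Note that the graph of $A_i$ is the plane $L_{P_i}$, and $P_i\in\tree(\wt Q_0)$ lies in a neighbor $R\in\cN_K(Q_0)$ with $\dist(S_i,\Pi(P_i))\lesssim\ell_1$ by \cref{l:whitney-prop1}.

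The core step compares $L_{P_i}$ with $L_0$ along the chain of ancestors of $P_i$ up to $R$, and then from $R$ to $Q_0$. At a first pass the angles add up to $C\epsilon\,|J(P_i)-k_0|$, which is too large. The better approach tracks heights rather than angles. For consecutive cubes $P\subset P'$ in the chain, \eqref{eqbeta100} implies that $L_P$ and $L_{P'}$ are within distance $\lesssim\epsilon\,\ell(P')$ inside $3B_P$. On the relevant region this gives $\|A_P-A_{P'}\|_{L^\infty(\Pi(C B_{P'}))}\lesssim\epsilon\,\ell(P')$, since both planes have slope $\lesssim\lambda_0$ and we evaluate over a ball of size comparable to $\ell(P')$. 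Summing over the chain then gives a geometric series bounded by $\epsilon\,\ell(R)$. Comparing $L_R$ with $L_{Q_0}$ via the finite chain of neighbors at scale $\ell(Q_0)$ contributes $C(K)\epsilon\,\ell(Q_0)$. Finally, since $L_0=L_{Q_0}$ passes through the origin (horizontal after normalization) and every point considered lies in $KB_{Q_0}$, we get $|A_i(p)|\lesssim C(K)\,\epsilon\,\ell(Q_0)$.

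A more direct alternative avoids the chains. Take any $y\in P_i$. By \eqref{eqbeta100} at scale $\approx K\ell(Q_0)$ centered at $x_{Q_0}$, and since $L_0$ minimizes $\beta_{\pom}(3B_{Q_0})$ with $\beta\lesssim\epsilon$, we have $\dist(y,L_0)\lesssim\epsilon K\ell(Q_0)$, using a flatness comparison between the balls $3B_{Q_0}$ and $KB_{Q_0}$. In addition, $L_{P_i}$ is within $\epsilon\,\ell(P_i)$ of $y$'s neighborhood, and it has slope $\lesssim\lambda_0$ over the horizontal distance $\lesssim\ell_1$ separating $p$ from $\Pi(y)$. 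Combining these yields $|A_i(p)|\lesssim \epsilon K\ell(Q_0)+\epsilon\ell_1+\lambda_0\ell_1$.

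The main obstacle is the error term $\lambda_0\ell_1$ in this direct route, which is not a priori $\lesssim\epsilon\,\ell(Q_0)$. It can be absorbed using $\ell_1\leq 2^{-c\lambda_0/\epsilon}\ell(Q_0)\ll\epsilon\,\ell(Q_0)$ for $\epsilon$ small. Alternatively, the plane $L_{P_i}$ passes within $\epsilon\ell(P_i)$ of points of $\pom$ near $p$, which removes the slope term altogether. With either fix, the constant's dependence on $K$ is harmless since $K$ is fixed before $\epsilon$, and this gives the claimed bound $\dist(x,L_0)\lesssim\epsilon\,\ell(Q_0)$ for all $x\in\Gamma$.
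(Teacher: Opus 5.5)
Your proposal is correct. The paper does not write out a proof of this lemma. It only says the lemma is analogous to the corresponding one in \cite{Tolsa-llibre}, and the mechanism there is the one in your first route. One compares the affine functions whose graphs are the planes $L_P$ along the ancestors of $P_i$: heights differ by $\lesssim\epsilon\,\ell(P')$ on $\Pi(CB_{P'})$, so the errors sum geometrically to $\lesssim\epsilon\,\ell(R)$. One then compares across the neighbors in $\cN_K(Q_0)$ at the top scale, using that the affine function of $L_0$ vanishes identically. Equivalently, one combines \cref{l:distQ-L} for $Q_0$ and its neighbors with the $C(K)\epsilon$-closeness of $L_Q$ to $L_0$. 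Your remark that one must track heights rather than accumulate angles is exactly the point of that argument.

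Your direct route is genuinely different and shorter. It uses only the flatness of $\pom$ at scale $\approx K\ell(Q_0)$ and the single plane $L_{P_i}$, at the price of a crude slope error $\lambda_0\ell_1$. That error is negligible only because of two features specific to this paper:
\begin{itemize}
\item all stopping cubes have the same side length $\ell_1$;
\item the contradiction hypothesis forces $\ell_1\leq 2^{-c\lambda_0/\epsilon}\ell(Q_0)$, so $\lambda_0\ell_1\leq \epsilon\,(\lambda_0/\epsilon)\,2^{-c\lambda_0/\epsilon}\,\ell(Q_0)\lesssim\epsilon\,\ell(Q_0)$.
\end{itemize}
The chain argument needs neither fact and would survive in the general construction with variable stopping sizes; the direct one would not.

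Two minor points. First, a one-step comparison between $3B_{Q_0}$ and $KB_{Q_0}$ gives $\dist(y,L_0)\lesssim\epsilon K^2\ell(Q_0)$ (angle $\lesssim\epsilon K$ over distance $\approx K\ell(Q_0)$), not $\epsilon K\ell(Q_0)$. This is harmless since $K$ is fixed before $\epsilon$. Second, your alternative fix, that $L_{P_i}$ passes within $\epsilon\ell(P_i)$ of points of $\pom$ near $p$, is not justified as stated. The plane $L_{P_i}$ only approximates $\pom$ in $3B_{P_i}$, and the unspecified constant in \cref{l:whitney-prop1} does not ensure $p\in\Pi(3B_{P_i})$. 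You would have to pass to a ball of radius $C\ell_1$ using \eqref{eqbeta100}. Since your first fix already works, neither point affects correctness.
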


\begin{coro}\label{coroGamma}
  For every $x\in\Gamma\cap 6B_{Q_0}$, we have
  \begin{equation}\label{eqbbetaG1}
    b\beta_\Gamma(x,r)\lesssim \epsilon\quad \mbox{ for all $r>0$,}
  \end{equation}
  and
  \begin{equation}\label{eqbbetaG2}
    b\beta_\Gamma(x,r)\lesssim \epsilon\,\frac r\ell_1\quad \mbox{ for all $r\leq\ell_1$.}
  \end{equation}
\end{coro}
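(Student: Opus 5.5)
We prove the two bounds in Corollary \ref{coroGamma} separately, using the structure of $A$ from Lemma \ref{l:A} together with Lemmas \ref{l:distQ-L} and \ref{l:GdistL0}. Since $\Gamma$ is the graph of $A$ over $L_0$, for $x=(p,A(p))$ it suffices to find, for each $r$, a hyperplane $L$ through a point near $x$ such that $|A-\ell|\lesssim\epsilon r$ on $B(p,Cr)\cap L_0$, where $\ell$ is the affine function whose graph is $L$. Indeed, $A$ is Lipschitz with slope $\lesssim\lambda_0<1$, so for a graph the vertical $\beta$-number controls $b\beta_\Gamma$ in both directions, and $\beta$ and $b\beta$ coincide up to constants.

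\emph{Small scales, (\ref{eqbbetaG2}).} Let $r\le\ell_1$ and $x=(p,A(p))\in\Gamma\cap 6B_{Q_0}$. Then $p\in 6B_{Q_0}\cap L_0\subset\frac12 KB_{Q_0}$, so $p$ lies in some $S_i$ with $i\in I_0$, and $B(p,Cr)\subset 15S_i$ because $r\le\ell_1=\ell(S_i)$. Taking $\ell$ to be the first-order Taylor polynomial of $A$ at $p$, the bound $|\nabla^2A|\lesssim\epsilon/\ell_1$ on $15S_i$ from Lemma \ref{l:A} gives $|A-\ell|\lesssim \epsilon r^2/\ell_1$ on $B(p,Cr)$. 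Dividing by $r$ yields $b\beta_\Gamma(x,r)\lesssim\epsilon r/\ell_1$.

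\emph{Intermediate scales, (\ref{eqbbetaG1}) for $\ell_1\le r\lesssim \ell(Q_0)$.} Since $x$ is close to $\pom$ by Lemma \ref{l:distG-A} and Lemma \ref{l:whitney-prop1}, we can pick a cube $Q\in\tree(\wt Q_0)$ with $\ell(Q)\approx r$, $Q\cap 10B_{Q_0}\neq\varnothing$, and $B(x,r)\subset 3B_Q$. This is the usual argument: take the ancestor of the cube $P_i$ from Lemma \ref{l:whitney-prop1} at the appropriate generation. All cubes of side length between $\ell_1$ and $\ell(Q_0)$ belong to the tree, which is where the $\ell_1$ stopping is used. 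Lemma \ref{l:distQ-L} then gives $\dist(y,L_Q)\lesssim\epsilon\ell(Q)\approx\epsilon r$ for all $y\in\Gamma\cap B(x,r)$, so $\beta_\Gamma(x,r)\lesssim\epsilon$. The graph property, or alternatively Corollary \ref{l:LBcloseGA}, supplies the reverse inclusion needed for $b\beta$.

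\emph{Large scales.} For $r\gtrsim\ell(Q_0)$, Lemma \ref{l:GdistL0} says $\Gamma$ lies within $\epsilon\ell(Q_0)\lesssim\epsilon r$ of $L_0$. Outside $KB_{Q_0}$ we have $A=0$, so $\Gamma$ coincides with $L_0$ there, and every point of $L_0$ lies within $\epsilon r$ of $\Gamma$. Taking $L=L_0$ therefore gives $b\beta_\Gamma(x,r)\lesssim\epsilon$ for all $r\gtrsim \ell(Q_0)$.

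The main obstacle is the intermediate range. One has to check that a suitable tree cube $Q$ with $B(x,r)\subset 3B_Q$ exists for every such $r$, including when $x$ is near the edge of $6B_{Q_0}$. This requires $K$ to be large, so that the relevant cubes lie in $\cN_K(Q_0)$, and it needs a careful treatment of the transition scales $r\approx\ell_1$ and $r\approx\ell(Q_0)$, where the bounds $\lesssim\epsilon$ from the three regimes must be glued together with constants that do not depend on the scale.
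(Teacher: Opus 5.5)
Your proposal is correct and follows the same route as the paper. The Hessian bound $|\nabla^2A|\lesssim\epsilon/\ell_1$ from \cref{l:A} gives \eqref{eqbbetaG2}; for $r\ge\ell_1$, \eqref{eqbbetaG1} comes from \cref{l:distQ-L} and \cref{l:LBcloseGA} at intermediate scales and from \cref{l:GdistL0} at large scales, and you supply the choice of the cube $Q$ that the paper leaves implicit. One small adjustment: if you invoke \cref{l:LBcloseGA} for the reverse inclusion, choose $\ell(Q)\approx r$ large enough that $B(x,r)\subset 2B_Q$ rather than $3B_Q$, since that corollary is stated on $L_Q\cap 2B_Q$.
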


\begin{proof}
  The estimate \cref{eqbbetaG2} for $r\leq\ell_1$ follows from the fact that $|\nabla^2A(\Pi(x))| \lesssim \epsilon/\ell_1$ for $x\in 6B_{Q_0}$.

  It suffices to prove \cref{eqbbetaG1} for $r\geq\ell_1$ because otherwise it follows from \cref{eqbbetaG2}.
  In the case $r\geq\ell_1$, the estimate is a consequence of \cref{l:distQ-L,l:GdistL0,l:LBcloseGA}.
\end{proof}
\vv

From the above results and the transversality between $\Gamma$ and $\partial(6B_{Q_0})$, we easily get the following.

\begin{coro}\label{coroguai}
  There exists a constant $C>0$ such that
  $$\pom\cap 6B_{Q_0}\subset U_{C\epsilon\ell_1} (\Gamma)$$
  and
  $$\Gamma\cap 6B_{Q_0}\subset U_{C\epsilon\ell_1} (\pom).$$
\end{coro}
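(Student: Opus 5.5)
The first inclusion is essentially \cref{l:distG-A}. Since $6B_{Q_0}\subset 10B_{Q_0}$, every $x\in\pom\cap 6B_{Q_0}$ satisfies $\dist(x,\Gamma)\lesssim\epsilon\ell_1$. Hence $\pom\cap 6B_{Q_0}\subset U_{C\epsilon\ell_1}(\Gamma)$ for a suitable $C$. So the whole content of the corollary is the reverse inclusion $\Gamma\cap 6B_{Q_0}\subset U_{C\epsilon\ell_1}(\pom)$.

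For the reverse inclusion, fix $x\in\Gamma\cap 6B_{Q_0}$ and write $x=(p,A(p))$ with $p\in L_0$. Since $\Gamma$ is nearly horizontal (by \cref{l:A} and \cref{l:GdistL0}), the point $p$ lies in $\Pi(7B_{Q_0})$, which is well inside $\frac12 KB_{Q_0}$. So $p\in S_i$ for some $i\in I_0$. By \cref{l:whitney-prop1} there is $P_i\in\tree(\wt Q_0)$ with $\ell(P_i)\approx\ell_1$ and $\dist(S_i,\Pi(P_i))\lesssim\ell_1$.

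The plan is to use the plane $L_{P_i}$ as an intermediary. There are two steps.

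First, show that $\dist(x,L_{P_i})\lesssim\epsilon\ell_1$. Since $\Pi(x)=p$ is within $C\ell_1$ of $\Pi(P_i)$, and both $\Gamma$ and $L_{P_i}$ are graphs over $L_0$ with slope $\lesssim\lambda_0$, the point $x$ lies in $C'B_{P_i}$. Here one may need to pass to an ancestor of $P_i$ of comparable size, still in $\tree(\wt Q_0)$, so that $x\in 3B_{P}$ and $P\cap 10B_{Q_0}\ne\varnothing$; its plane differs from $L_{P_i}$ by only $O(\epsilon)$ in angle. Then \cref{l:distQ-L} gives $\dist(x,L_P)\lesssim\epsilon\ell(P)\approx\epsilon\ell_1$. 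Let $y\in L_P$ be the nearest point to $x$. It lies in $2B_P$ after enlarging the ancestor by a bounded number of generations.

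Second, show that $\dist(y,\pom)\lesssim\epsilon\ell(P)$. By \cref{eqbeta100} we have $b\beta_\pom(x_P,3\ell(P))\le\epsilon$ (all scales involved are below $\epsilon^{-1}R_0$). The minimizing plane for $b\beta$ and $L_P$ (which minimizes $\beta$ on $3B_P$) are $C\epsilon\ell(P)$-close on $2B_P$, because both approximate $\pom\cap 3B_P$, which is spread out by Ahlfors regularity. Hence $\dist(y,\pom)\lesssim\epsilon\ell(P)$.

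Combining the two steps gives $\dist(x,\pom)\lesssim\epsilon\ell_1$. The step I expect to need the most care is the bookkeeping in the first step. One must check that the chosen cube $P$ really satisfies $x\in 3B_P$ and $y\in 2B_P$, with constants independent of $\epsilon$. The point of the \enquote{transversality between $\Gamma$ and $\partial(6B_{Q_0})$} is that points of $\Gamma$ near the boundary of $6B_{Q_0}$ still have their projection well inside the region where the Whitney cubes $S_i$, $i\in I_0$, and the tree cubes live. This is guaranteed by choosing $K$ large, since $\frac12KB_{Q_0}\supset 10B_{Q_0}$ with room to spare.
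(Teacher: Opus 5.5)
Your argument is correct, but it takes a different route from the one the paper points to. The paper gives only a one-line justification: the preceding results plus the transversality of $\Gamma$ and $\partial(6B_{Q_0})$. The transversality hint suggests a separation argument, in the spirit of the unlabeled lemma in \cref{secgeneral} that upgrades $\beta_{\pom}\le\epsilon$ to $b\beta_{\pom}\lesssim\epsilon$ using two-sided corkscrews. That argument would run as follows.

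By \cref{l:distG-A}, $\pom\cap 8B_{Q_0}$ is trapped in $U_{C\epsilon\ell_1}(\Gamma)$. The parts of $8B_{Q_0}$ lying more than $2C\epsilon\ell_1$ above and below $\Gamma$ are connected (this is where transversality enters) and miss $\pom$. So each lies either in $\Omega$ or in $\R^{n+1}\setminus\overline\Omega$. The corkscrew balls at scale $\ell(Q_0)$ force them into different ones. Hence every vertical segment of length $\approx\epsilon\ell_1$ centered at a point of $\Gamma\cap 6B_{Q_0}$ meets $\pom$.

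You argue metrically instead, passing through the plane $L_P$ of a tree cube with $\ell(P)\approx\ell_1$. \Cref{l:distQ-L} takes you from $\Gamma$ to $L_P$, and the bilateral flatness \eqref{eqbeta100} takes you from $L_P$ to $\pom$. This needs neither corkscrews nor connectivity, so transversality plays no real role for you; $p\in\Pi(6B_{Q_0})\subset\frac12KB_{Q_0}$ is automatic. The price is that you must locate a suitable cube at scale $\ell_1$ next to $x$, which the topological argument avoids.

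One step needs a better justification. The claim $x\in C'B_{P_i}$ does not follow from $\Gamma$ and $L_{P_i}$ both being graphs of slope $\lesssim\lambda_0$, since that gives no control on their vertical separation. Instead, argue as follows.

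\begin{itemize}
\item Take $z\in P_i$. It lies in $10B_{Q_0}$: indeed $|\Pi(z)-p|\lesssim\ell_1$ by \cref{l:whitney-prop1}, and $\pom$ is $C(K)\epsilon\ell(Q_0)$-close to $L_0$ on $KB_{Q_0}$ by \eqref{eqbeta100} at scale $\approx K\ell(Q_0)$.
\item So \cref{l:distG-A} gives $w\in\Gamma$ with $|z-w|\lesssim\epsilon\ell_1$.
\item Then $|\Pi(w)-p|\lesssim\ell_1$, and the slope bound $\lesssim\lambda_0$ for $A$ from \cref{l:A} gives $|x-w|\lesssim\ell_1$. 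Hence $|x-z|\lesssim\ell_1$.
\end{itemize}

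With this fix, the ancestor bookkeeping and the comparison of $L_P$ with the $b\beta$-minimizing plane go through as you describe.
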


Above, for $a>0$ and $F\subset\R^{n+1}$, $U_a(F)$ stands for the $a$-neighborhood of $F$.

\vv

\section{Proof of the Main Lemma}\label{secfinal}

\subsection{The main idea}

Let $A:L_0\to \R$ be the Lipschitz map constructed in \cref{sec-Lipgraph}. Consider a non-negative $C^\infty$ function
$\chi:L_0\to\R$ supported on $4B_{Q_0}\cap L_0$ which equals $1$ on $3B_{Q_0}\cap L_0$ and such that $\|\nabla^2\chi\|_\infty\lesssim 1/\ell(Q_0)^2\approx 1/R_0^2$.
Consider the function $u:L_0\to \R$ defined by
$$u= \chi A.$$
Denote, as in \cref{sec Littlewood-Paley},
$$
  \mathfrak b_k := \sup_{x_0\in\mathbb{R}^n} \mathfrak b_u (x_0, 2^{-k}).$$
Recall that we identify $L_0$ with $\R^n$.
We intend to show that
\begin{equation}\label{eqanglekey}
  \sum_{j=k_0}^{k_1} \mathfrak b_j \ll \lambda_0.
\end{equation}
Recall that $\ell(Q_0)=2^{-k_0}$ and $\ell_1=2^{-k_1}$. We claim that the last estimate leads to a contradiction. Indeed, for a ball $\Delta\subset L_0$, denote by $L_\Delta^u$ the
graph of the best affine approximation for $\mathfrak b_u(\Delta)$. Then,
for $Q\in\DD_{\pom,k}(Q_0)$ with parent $\wh Q$,
$$
  \angle(L_{\Pi(B_Q)}^u, L_{\Pi(B_{\wh Q})}^u) \lesssim \mathfrak b_u(\Pi(B_{\wh Q})) \leq \mathfrak b_{k-1}.$$
Hence,
$$\angle(L_{\Pi(B_{Q_1})}^u, L_{\Pi(B_{Q_0})}^u)\lesssim \sum_{j=k_0}^{k_1}\mathfrak b_{j}\ll\lambda_0.$$
However, by \cref{l:distQ-L} and the definition of $Q_1$, we have
$$\angle(L_{\Pi(B_{Q_1})}^u, L_{\Pi(B_{Q_0})}^u)\approx \lambda_0,$$
which yields the desired contradiction.
\vv

\subsection{Estimating \texorpdfstring{$\mathfrak b_k$}{𝔟ₖ}}

\begin{lemma}\label{lembkfac}
  The following holds:
  \begin{itemize}
    \item[(a)] $\mathfrak b_k \lesssim \epsilon$ for all $k\in\Z$.

    \item[(b)] $\mathfrak b_k \lesssim \epsilon 2^{-(k-k_1)}$ for all $k\geq k_1$.
  \end{itemize}
\end{lemma}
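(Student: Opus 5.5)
We reduce both estimates to the geometry of $A$ obtained in \cref{sec-Lipgraph}, and then pass from $A$ to $u=\chi A$ with the product rule \cref{thm:beta-graph-product*}. Fix $x_0\in L_0\equiv\R^n$ and $r=2^{-k}$. Two regimes are trivial. If $x_0\notin 5B_{Q_0}$ and $r\leq \ell(Q_0)$, then $u\equiv 0$ on $B(x_0,r)$, so $\mathfrak b_u(x_0,r)=0$. For large scales $r\gtrsim \ell(Q_0)$ we use the crude bound $\mathfrak b_u(x_0,r)\leq \|u\|_\infty/r$. By \cref{l:GdistL0}, $\|A\|_{L^\infty(4B_{Q_0})}\lesssim \epsilon\,\ell(Q_0)$, hence $\|u\|_\infty\lesssim\epsilon\ell(Q_0)$ and $\mathfrak b_k\lesssim\epsilon$ for such $k$. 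So we may assume $x_0\in 5B_{Q_0}\cap L_0$ and $r\le \ell(Q_0)$.

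\emph{Estimates for $A$.} We first show $\mathfrak b_A(x_0,r)\lesssim\epsilon$ for $r\leq\ell(Q_0)$, and $\mathfrak b_A(x_0,r)\lesssim \epsilon r/\ell_1$ for $r\leq\ell_1$.
\begin{itemize}
\item Since $A$ is Lipschitz with slope $\lesssim\lambda_0<1$ on $10B_{Q_0}$, the vertical distance from $\Gamma$ to a hyperplane of small slope is comparable to the Euclidean distance. Hence the graph beta number of $A$ on $B(x_0,r)$ is bounded by $C\, b\beta_\Gamma(\varphi_A(x_0),Cr)$ for an affine function whose graph is a near-optimal plane.
\item The minimizing plane has slope $\lesssim\lambda_0$, because $\Gamma$ is a small-slope graph with $b\beta_\Gamma$ small; this makes it a graph over $L_0$.
\item \Cref{coroGamma}, namely \cref{eqbbetaG1} and \cref{eqbbetaG2}, then gives both bounds.
\end{itemize}
For $r\le \ell_1$ one can also argue directly: Taylor expansion together with $|\nabla^2A|\lesssim\epsilon/\ell_1$ on $15S_i$ (\cref{l:A}) gives $\mathfrak b_A(x_0,r)\lesssim r\epsilon/\ell_1$, and this is exactly (b) for $A$.

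\emph{Transfer to $u=\chi A$.} Apply \cref{thm:beta-graph-product*} with $f=A$ and $g=\chi$:
\[
\mathfrak b_u(x_0,r)\le |\chi(x_0)|\,\mathfrak b_A(x_0,r)+|A(x_0)|\,\mathfrak b_\chi(x_0,r)+\sup_{y_0\in B(x_0,r)}\frac{|A(x_0)-A(y_0)|\,|\chi(x_0)-\chi(y_0)|}{r}.
\]
We bound the three terms as follows.
\begin{itemize}
\item The first term is handled by the estimates for $A$ above.
\item For the second term, $\mathfrak b_\chi(x_0,r)\lesssim r\|\nabla^2\chi\|_\infty\lesssim r/\ell(Q_0)^2$, while $|A(x_0)|\lesssim\epsilon\ell(Q_0)$ by \cref{l:GdistL0}. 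So this term is $\lesssim \epsilon r/\ell(Q_0)$.
\item For the third term, on $5B_{Q_0}$ we have $|\chi(x_0)-\chi(y_0)|\lesssim r/\ell(Q_0)$. We also need $|A(x_0)-A(y_0)|\lesssim \epsilon\, r$, i.e.\ a gradient of $A$ of size $\epsilon$ rather than $\lambda_0$. This holds because $\nabla A$ is within $O(\epsilon)$ of the slope of $L_{P_i}$, which is within $C\epsilon$ of that of $L_0$ wherever $\nabla\chi\neq 0$. Indeed, at those points, which lie at distance $\approx\ell(Q_0)$ from the cube $Q_1$, \cref{l:GdistL0} combined with \cref{eqbbetaG1} and the small second derivative gives flatness relative to $L_0$. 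Even using only the Lipschitz bound $\lambda_0$, this term is $\lesssim \lambda_0 r^2/(r\,\ell(Q_0))\le \lambda_0 r/\ell(Q_0)$. We then argue as follows: this is harmless for (b) since $r\le\ell_1$, and it is $\lesssim\epsilon$ whenever $r\le \epsilon\ell(Q_0)/\lambda_0$.
\end{itemize}

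The main obstacle is this cross term at intermediate scales, where $\epsilon\ell(Q_0)/\lambda_0\lesssim r\le \ell(Q_0)$. There we fall back on $\mathfrak b_u(x_0,r)\le 2\|u\|_{L^\infty(B(x_0,r))}/r\lesssim \epsilon\ell(Q_0)/r\lesssim \lambda_0$; this is insufficient, so we prefer the sharper gradient bound. Since $\Gamma$ lies in the $C\epsilon\ell(Q_0)$-neighbourhood of $L_0$ (\cref{l:GdistL0}) and has $b\beta_\Gamma\lesssim\epsilon$ at all scales, the best planes at scale $r\approx\ell(Q_0)$ have slope $\lesssim\epsilon$. Combining this with $\mathfrak b_A(x_0,r)\lesssim \epsilon$ gives the oscillation bound $|A(x_0)-A(y_0)|\lesssim\epsilon r$ for $|x_0-y_0|\le r$, $r\gtrsim\epsilon\ell(Q_0)$: write $A-a=O(\epsilon r)$ with $|\nabla a|\lesssim\epsilon$. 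This yields (a). Part (b) follows from the $\ell_1$-scale estimates, since every term carries a factor $r/\ell_1$ or $r/\ell(Q_0)\le r/\ell_1$.
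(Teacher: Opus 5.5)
Your strategy is the paper's: control $\mathfrak b_A$ by $b\beta_\Gamma$ (\cref{coroGamma}), obtain (b) from $|\nabla^2 A|\lesssim \epsilon/\ell_1$, and pass to $u=\chi A$ with \cref{thm:beta-graph-product*}. The gap is in the cross term $\sup_{y_0\in B(x_0,r)}|A(x_0)-A(y_0)|\,|\chi(x_0)-\chi(y_0)|/r$ at scales $\epsilon\ell(Q_0)/\lambda_0\lesssim r\le \ell(Q_0)$, which you call the main obstacle. Both justifications you give there fail.

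First, nothing in the construction makes $|\nabla A|=O(\epsilon)$ where $\nabla\chi\neq 0$. Cubes of $\tree(\wt Q_0)$ only satisfy $\angle(L_P,L_0)\le\lambda_0/9$. The tilt of at most $C\epsilon$ per generation can accumulate, over the $\gtrsim\lambda_0/\epsilon$ generations between $\ell(Q_0)$ and $\ell_1$, to an angle comparable to $\lambda_0$ anywhere in $4B_{Q_0}$. Being at distance $\approx\ell(Q_0)$ from $Q_1$ is irrelevant, since $Q_1$ is only a largest bad cube, not the only tilted region.

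Second, $\|A\|_{L^\infty}\lesssim\epsilon\ell(Q_0)$ and $\mathfrak b_A(x_0,r)\lesssim\epsilon$ only give $|\nabla a|\lesssim \epsilon\ell(Q_0)/r+\epsilon$ for the best affine $a$ at scale $r$. This is $\approx\lambda_0$ when $r\approx\epsilon\ell(Q_0)/\lambda_0$. So the slope bound $\epsilon$ holds only at $r\approx\ell(Q_0)$, and the oscillation bound $|A(x_0)-A(y_0)|\lesssim\epsilon r$ for $r\gtrsim\epsilon\ell(Q_0)$ does not follow from the available lemmas.

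The fix is that this term needs no gradient information. Use $|A(x_0)-A(y_0)|\le 2\|A\|_{L^\infty}\lesssim\epsilon\ell(Q_0)$ (\cref{l:GdistL0}, which you already invoke) together with $|\chi(x_0)-\chi(y_0)|\lesssim\min\{1,r/\ell(Q_0)\}$. Then the cross term is $\lesssim r^{-1}\,\epsilon\ell(Q_0)\min\{1,r/\ell(Q_0)\}\le\epsilon$ for every $r>0$. This gives (a) at all scales, and the case split at $\epsilon\ell(Q_0)/\lambda_0$ becomes unnecessary.

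One small point for (b): your bound $\lambda_0 r/\ell(Q_0)$ on the cross term is $\lesssim\epsilon r/\ell_1$ not because $r\le\ell_1$. It holds because $\ell_1\le 2^{-c\lambda_0/\epsilon}\ell(Q_0)$, which implies $\lambda_0\ell_1\lesssim\epsilon\ell(Q_0)$. The paper's argument via $|\nabla^2 u|\lesssim\epsilon/\ell_1$ uses the same fact implicitly for the term $\nabla\chi\otimes\nabla A$.
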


\begin{proof}
  The statement (b) follows from
  the fact that, by \cref{l:A},
  for any $x \in 15 S_i$ with $i\in I_{\cW}$, we have  $|\nabla^2u(x)| \lesssim \frac{\epsilon}{\ell(S_i)}=\frac{\epsilon}{\ell_1}$ (since
  $|\nabla^2A(x)| \lesssim \frac{\epsilon}{\ell(S_i)}$ and $\|\nabla^i\chi\|_\infty \lesssim \ell(Q_0)^{-i}$ for $i=1,2$).

  The statement (a) is due to \cref{thm:beta-graph-product*} and to the fact that for all $x\in \Gamma$ and all $r>0$, $b\beta_{\Gamma}(x,r)\lesssim
    \epsilon$. In the case $\ell_1\leq r\leq \ell(Q_0)$ this follows from \cref{l:distQ-L} and in the case $r\geq \ell(Q_0)$ from \cref{l:GdistL0}. The case  $0<r\leq \ell_1$ is a consequence of the statement (b).
\end{proof}

Recall that, by \cref{thm:bk-upper-bound-Aj},
\begin{equation}\label{eqPj1}
  \mathfrak b_k \lesssim \sum_j 2^{-|j-k|} 2^j \|P_j u\|_\infty.
\end{equation}
To estimate $\|P_j u\|_\infty$ we will use the following result.

\begin{lemma}\label{lempju}
  For $j\leq k_0$, it holds that
  $$\|P_ju\|_\infty \lesssim \epsilon\,2^{-k_0(n+1)+jn}.$$
  For every $j\geq k_0\geq1$ and $x_0\in L_0$, it holds that
  $$|  P_j u(x_0)|  \lesssim  2^{-2j+k_0} +   \left|\chi(x_0)\int \xi_{2^{-j}}(x_0-y_0) A(y_0) \,\mathrm{d}y_0  \right|.$$
\end{lemma}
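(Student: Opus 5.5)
For the first estimate ($j\le k_0$), I would exploit the support and size of $u$ and the vanishing moments of $\xi$. By \cref{l:A} and \cref{l:GdistL0}, $u=\chi A$ is supported in $4B_{Q_0}\cap L_0$, a ball of radius $\approx 2^{-k_0}$, and $\|u\|_\infty\lesssim \epsilon\,\ell(Q_0)=\epsilon 2^{-k_0}$. (Since $L_0$ passes through the origin near $Q_0$, $|A|\lesssim \epsilon\ell(Q_0)$ on $\supp\chi$.) Then I bound directly:
$$|P_ju(x_0)|\le \int |\xi_{2^{-j}}(x_0-y_0)|\,|u(y_0)|\,dy_0\le \|\xi_{2^{-j}}\|_\infty \|u\|_{L^1}\lesssim 2^{jn}\cdot \epsilon 2^{-k_0}\cdot 2^{-k_0 n},$$
which is exactly $\epsilon 2^{-k_0(n+1)+jn}$. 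No cancellation is needed here; the only thing to verify is the $L^\infty$ bound $\|A\|_{L^\infty(4B_{Q_0})}\lesssim\epsilon\ell(Q_0)$, which is the content of \cref{l:GdistL0}.

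For the second estimate ($j\ge k_0$), I would repeat the Taylor expansion argument of \cref{thm:Aj-upper-bound-bk}, now with $\chi$ at scale $\ell(Q_0)=2^{-k_0}$ and $F$ replaced by $A$. I write $\chi(y_0)=\chi(x_0)+\nabla\chi(x_0)\cdot(y_0-x_0)+O(\|\nabla^2\chi\|_\infty|y_0-x_0|^2)$ and split $P_ju(x_0)$ into three terms.

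\textbf{Zeroth-order term.} This is exactly $\chi(x_0)\int\xi_{2^{-j}}(x_0-y_0)A(y_0)\,dy_0$, which is kept on the right-hand side.

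\textbf{Remainder term.} Using $\|\nabla^2\chi\|_\infty\lesssim 2^{2k_0}$ and $\|A\|_\infty\lesssim \epsilon 2^{-k_0}$ (globally, since $A$ is supported in $KB_{Q_0}$ and is small there), it is bounded by
$$2^{2k_0}\cdot \epsilon 2^{-k_0}\cdot 2^{-2j}\lesssim 2^{-2j+k_0}.$$

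\textbf{First-order term.} I use the vanishing moments of $\xi$ to replace $A(y_0)$ by $A(y_0)-A(x_0)$, and then the Lipschitz bound on $A$. This gives
$$\|\nabla\chi\|_\infty\int|\xi_{2^{-j}}(x_0-y_0)|\,|y_0-x_0|\,|A(y_0)-A(x_0)|\,dy_0 \lesssim 2^{k_0}\,\mathrm{Lip}(A)\,2^{-2j}\lesssim 2^{-2j+k_0}.$$

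The main subtlety I expect is that the Lipschitz slope $\lesssim\lambda_0$ of $A$ is only asserted in $10B_{Q_0}$. For $y_0$ far from $x_0$ (with $x_0\in\supp\chi\subset 4B_{Q_0}$), I would instead use $|A(y_0)-A(x_0)|\le 2\|A\|_\infty\lesssim\epsilon 2^{-k_0}\lesssim |y_0-x_0|$, valid since $|y_0-x_0|\gtrsim 2^{-k_0}$ there. This gives a global Lipschitz condition at $x_0$, as in \cref{eq:f-lipschitz-condition}.

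If $x_0\notin\supp\chi$, the zeroth- and first-order Taylor terms vanish, so only the remainder bound is needed and the same estimate holds.
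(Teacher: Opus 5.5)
Your proposal is correct and follows the paper's proof. For $j\le k_0$ you use the same bound $\|\xi_{2^{-j}}\|_\infty\|u\|_{L^1}$, and for $j\ge k_0$ the same Taylor expansion of $\chi$ from \cref{thm:Aj-upper-bound-bk}, with $\|\nabla^i\chi\|_\infty\lesssim 2^{ik_0}$ and $\|A\|_\infty\lesssim\epsilon 2^{-k_0}$. Your handling of the first-order term is, if anything, more careful than the paper's brief appeal to $\|\nabla u\|_\infty\lesssim\lambda_0$: you establish a Lipschitz condition for $A$ at $x_0\in\supp\chi$ from the local slope bound in $10B_{Q_0}$ together with the smallness of $\|A\|_\infty$ away from it. This matches the remark the paper makes right after the lemma.
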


\begin{proof}
  For $j\leq k_0$, using that $P_ju = \xi_{2^{-j}} * u$, where $*$ is the convolution in $\R^n$, we get
  $$\|P_ju\|_\infty \leq  \|\xi_{2^{-j}}\|_\infty \|u\|_{L^1(\R^n)} \leq \frac1{2^{-jn}}\,\epsilon \,2^{-k_0(n+1)}.$$

  On the other hand, for $j\geq k_0$, we argue as in \cref{thm:Aj-upper-bound-bk}, using that $\|\nabla^i\chi\|_\infty\lesssim 1/R_0^i$ for $i=1,2$, and that $\|u\|_\infty \lesssim \epsilon\,2^{-k_0}$, $\|\nabla u\|_\infty \lesssim \lambda_0\leq1$.
\end{proof}

Analogous to the calculation in \cref{sec:restricted-g}, one checks that for $x_0\in\supp \chi$, $A$ satisfies a Lipschitz condition at $x_0$, provided that $\lambda_0$ is sufficiently small.
Thus, we combine \cref{thm:LP-beta-general,lempju} to conclude
\begin{align*}
  |  P_j u(x_0)|  \lesssim  2^{-2j+k_0} + 2^{-j} \chi(x_0) a_{\rho,\Omega_A^+}(\vphi_A(x_0),2^{-j}) + \lambda_0^2 2^{-j} \chi(x_0) \sum_{h\ge 1} 2^{-Nh} \mathfrak b_A(x_0, 2^{h-j}),
\end{align*}
for $j\ge k_0 \ge 1$ and $x_0\in L_0$.

Also, by \cref{thm:beta-graph-product},
\begin{align*}
  \chi(x_0) \mathfrak b_A(x_0,r)
  \le \mathfrak b_{u}(x_0,r)  + \lVert A \rVert_{L^{\infty}(B(x_0,r))} \mathfrak b_\chi(x_0,r) + C\frac{r}{R_0}
  \lesssim  \mathfrak b_{u}(x_0,r)  + \frac{r}{R_0}.
\end{align*}
Thus, for $j\ge k_0$,
\begin{align*}
  |  P_j u(x_0)| & \lesssim_N \!  2^{k_0-2j} +  2^{-j}\chi(x_0) a_{\rho,\Omega_A^+}(\vphi_A(x_0),2^{-j})
  + \lambda_0^2 \,2^{-j} \sum_{h\ge 1} (\mathfrak b_u(x_0,2^{h-j}) + 2^{h-j+k_0})2^{-Nh}                 \\
                 & \lesssim_N 2^{k_0-2j} +  2^{-j}\chi(x_0) a_{\rho,\Omega_A^+}(\vphi_A(x_0),2^{-j})
  + \lambda_0^2 \,2^{-j} \sum_{h\ge 1} \mathfrak b_u(x_0,2^{h-j}) 2^{-Nh}.
\end{align*}
Plugging this into \cref{eqPj1}, we get
\begin{equation}\label{eqbk2}
  \mathfrak b_k \lesssim \sum_{j\geq k_0} 2^{-|j-k|} \Big(2^{k_0-j} + \!\sup_{x\in 5B_{Q_0}\cap\Gamma}\! a_{\rho,\Omega_A^+}(x,2^{-j})
  + \lambda_0^2   \sum_{h\ge 1} 2^{-Nh}\mathfrak b_{j-h}  \Big) + \epsilon\!\sum_{j<k_0}2^{-|j-k|}\, 2^{(j-k_0)(n+1)}.
\end{equation}
\vv

\begin{lemma}\label{lembkloop}
  We have
  \begin{equation}\label{eqsumk*}
    \sum_{k\geq k_0} \sum_{j\geq k_0} 2^{-|j-k|} \sum_{h\ge 1}2^{-Nh} \mathfrak b_{j-h}  \lesssim \sum_{j=k_0}^{k_1} \mathfrak b_j + \epsilon.
  \end{equation}
\end{lemma}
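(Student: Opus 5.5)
The plan is to interchange the order of summation and reduce everything to a weighted sum of the $\mathfrak b_m$, which we then split according to whether $m$ lies in $[k_0,k_1]$, below $k_0$, or above $k_1$.

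\textbf{Reindexing.} Set $m=j-h$. Since all terms are nonnegative we may rearrange freely, and the left-hand side of \cref{eqsumk*} equals
\[
\sum_{m\in\Z} \mathfrak b_m\, w_m,\qquad w_m:=\sum_{h\ge1} 2^{-Nh}\,\mathbf 1_{\{m+h\ge k_0\}}\sum_{k\ge k_0}2^{-|m+h-k|}.
\]
The inner sum in $k$ is bounded by $\sum_{k\in\Z}2^{-|m+h-k|}\lesssim 1$. Hence $w_m\lesssim\sum_{h\ge \max(1,k_0-m)}2^{-Nh}$. This gives two bounds:
\begin{itemize}
\item $w_m\lesssim 1$ for every $m$;
\item $w_m\lesssim 2^{-N(k_0-m)}$ when $m<k_0$.
\end{itemize}

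\textbf{The three ranges of $m$.}
\begin{itemize}
\item For $k_0\le m\le k_1$ we use $w_m\lesssim1$, which produces the term $\sum_{j=k_0}^{k_1}\mathfrak b_j$.
\item For $m<k_0$ we use \cref{lembkfac}(a), namely $\mathfrak b_m\lesssim\epsilon$, together with the geometric decay of $w_m$. This gives
\[
\sum_{m<k_0}\epsilon\,2^{-N(k_0-m)}\lesssim\epsilon .
\]
\item For $m>k_1$ we use \cref{lembkfac}(b), namely $\mathfrak b_m\lesssim\epsilon\,2^{-(m-k_1)}$, together with $w_m\lesssim1$. This gives
\[
\sum_{m>k_1}\epsilon\,2^{-(m-k_1)}\lesssim\epsilon .
\]
\end{itemize}
Adding the three contributions yields \cref{eqsumk*}.

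\textbf{Main point to check.} The only delicate step is the range $m>k_1$. A crude bound $w_m\lesssim 1$ would be useless there without decay in $\mathfrak b_m$, so we genuinely need the decay for $m\ge k_1$ supplied by \cref{lembkfac}(b). We must also make sure that the implicit constants do not depend on $k_0$ or $k_1$. They do not: every bound used is a geometric series whose ratio is independent of the scales. Finally, we choose $N\ge1$, which is already assumed to be large.
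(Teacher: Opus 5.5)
Your proposal is correct and follows essentially the same route as the paper. Both reindex with $m=j-h$, bound the $k$-sum by a constant, and use the geometric decay in $h$ to get a factor $2^{-(k_0-m)}$ when $m<k_0$; both then split into $m<k_0$, $k_0\le m\le k_1$, $m>k_1$ and apply \cref{lembkfac}(a) and (b). The only cosmetic difference is that the paper fixes $N=1$ from the outset, while you keep a general $N\ge1$; this makes no difference since $2^{-Nh}\le 2^{-h}$.
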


\begin{proof}
  We choose $N=1$.
  Then the term on the left-hand side of \cref{eqsumk*} equals
  \begin{align*}
    \sum_{k\geq k_0} \sum_{j\geq k_0} 2^{-|j-k|} \sum_{h<j}2^{-(j-h)} \mathfrak b_{h}
     & \leq \sum_h \mathfrak b_{h} \sum_{j\geq \max(k_0,h)} 2^{-(j-h)} \sum_{k\geq k_0} 2^{-|j-k|} \\
     & \lesssim
    \sum_h \mathfrak b_{h} \sum_{j\geq \max(k_0,h)} 2^{-(j-h)}                                     \\
     & \lesssim
    \sum_h \mathfrak b_{h}  2^{-(\max(k_0,h)-h)} = \sum_h \mathfrak b_{h}  2^{-\max(k_0-h,0)}
    .
  \end{align*}

  We split the last sum as follows:
  $$\sum_h \mathfrak b_{h}  2^{-\max(k_0-h,0)} =
    \sum_{h<k_0} \mathfrak b_{h}   2^{-(k_0-h)} +
    \sum_{k_0\leq h\leq k_1} \mathfrak b_{h}  + \sum_{h>k_1} \mathfrak b_{h} =: T_1+ T_2 + T_3.$$
  To estimate $T_1$ we use the fact that, by \cref{lembkfac}, $\mathfrak b_{h}\lesssim \epsilon$ for all $h\in\Z$, and we deduce that
  $$T_1\lesssim\epsilon.$$
  To deal with $T_3$, we take into account that, by \cref{lembkfac} again, $\mathfrak b_h \lesssim \epsilon 2^{-(h-k_1)}$ for all $h\geq k_1$. This gives
  $$T_3 = \sum_{h>k_1} \mathfrak b_{h} \lesssim \sum_{h>k_1}\epsilon 2^{-(h-k_1)}\lesssim\epsilon.$$
  Gathering the estimates above yields the lemma.
\end{proof}
\vv

Let $M_0>1$ be a constant to be fixed below.
Summing the estimate \cref{eqbk2} over $k_0+M_0\leq k\leq k_1$ and applying the previous lemma, we derive
\begin{align}\label{eqsum941}
  \sum_{k=k_0+M_0}^{k_1} \mathfrak b_k & \lesssim \sum_{k\geq k_0+M_0} \,\sum_{j\geq k_0} 2^{-|j-k|} 2^{k_0-j} +
  \sum_{k\geq k_0} \sum_{j\geq k_0} 2^{-|j-k|} \sup_{x\in 5B_{Q_0}\cap\Gamma} a_{\rho,\Omega_A^+}(x,2^{-j}) \nonumber \\
                                       & \quad +
  \epsilon\sum_{k\geq k_0}\sum_{j<k_0}2^{-|j-k|}2^{(j-k_0)(n+1)} + \lambda_0^2\sum_{j=k_0}^{k_1} \mathfrak b_j + \epsilon = S_1+\ldots +S_4 + \epsilon.
\end{align}
We estimate the first sum on the right-hand side:
\begin{equation}\label{eqfhjvi3}
  \sum_{k\geq k_0+M_0} \,\sum_{j\geq k_0} 2^{-|j-k|} 2^{k_0-j} =
  \sum_{k\geq k_0+M_0} \,\sum_{j\geq k} 2^{-(j-k)} 2^{k_0-j} + \sum_{k\geq k_0+M_0} \,\sum_{k_0\leq j <k} 2^{-(k-j)} 2^{k_0-j}.
\end{equation}
The first sum on the right-hand side equals
$$\sum_{k\geq k_0+M_0} 2^{k_0+k} \,\sum_{j\geq k} 2^{-2j} \lesssim  \sum_{k\geq k_0+M_0} 2^{k_0+k} \,2^{-2k} =
  \sum_{k\geq k_0+M_0} 2^{k_0-k} \approx 2^{-M_0}.$$
For the second sum on the right-hand side of \cref{eqfhjvi3}, we write
\begin{align*}
  \sum_{k\geq k_0+M_0} \,\sum_{k_0\leq j <k} 2^{-(k-j)} 2^{k_0-j} & =
  \sum_{k\geq k_0+M_0} 2^{k_0-k}\,\sum_{k_0\leq j <k} 1               \\ &=  \sum_{k\geq k_0+M_0} (k-k_0)\,2^{k_0-k} \leq  \sum_{k\geq k_0+M_0} 2^{(k_0-k)/2}  \approx 2^{-M_0/2}.
\end{align*}
Thus,
$$S_1\lesssim 2^{-M_0/2}.$$

Regarding the sum $S_2$, we have
$$S_2 =  \sum_{j\geq k_0} \sup_{x\in 5B_{Q_0}\cap\Gamma} a_{\rho,\Omega_A^+}(x,2^{-j}) \sum_{k\geq k_0} 2^{-|j-k|} \lesssim
  \sum_{j\geq k_0} \sup_{x\in 5B_{Q_0}\cap\Gamma} a_{\rho,\Omega_A^+}(x,2^{-j}).$$
For the term $S_3$, we write
\begin{align*}
  S_3 & = \epsilon\sum_{k\geq k_0}\sum_{j<k_0}2^{(j-k_0)(n+1)}2^{-(k-j)} =
  \epsilon \sum_{j<k_0}2^{(j-k_0)(n+1)}\sum_{k\geq k_0}2^{-(k-j)}                   \\
      & \lesssim \epsilon \sum_{j<k_0}2^{(j-k_0)(n+1)} 2^{j-k_0} \lesssim \epsilon.
\end{align*}

Gathering the estimates above, and using that $\mathfrak b_k\lesssim \epsilon$ for $k_0\leq k< k_0+M_0$,
we obtain
$$\sum_{k=k_0}^{k_1} \mathfrak b_k \lesssim M_0\epsilon + \sum_{k=k_0+M_0}^{k_1}
  \mathfrak b_k \lesssim M_0\epsilon +2^{-M_0/2} + \sum_{j\geq k_0} \sup_{x\in 5B_{Q_0}\cap\Gamma} a_{\rho,\Omega_A^+}(x,2^{-j}) + \lambda_0^2\sum_{j=k_0}^{k_1} \mathfrak b_j + \epsilon.$$
Assuming $\lambda_0$ small enough (depending at most on the Ahlfors regularity constant $C_0$), and choosing $M_0 = \lceil \epsilon^{-1/2}
  \rceil$ we get
\begin{equation}\label{eqbk***}
  \sum_{k=k_0}^{k_1} \mathfrak b_k \lesssim M_0\epsilon + 2^{-M_0/2} + \sum_{j\geq k_0} \sup_{x\in 5B_{Q_0}\cap\Gamma} a_{\rho,\Omega_A^+}(x,2^{-j}) + \epsilon \lesssim \epsilon^{1/2} + \sum_{j\geq k_0} \sup_{x\in 5B_{Q_0}\cap\Gamma} a_{\rho,\Omega_A^+}(x,2^{-j}).
\end{equation}

In \cref{lem7.11} below we will prove that
$$\sum_{j\geq k_0} \sup_{x\in 5B_{Q_0}\cap\Gamma} a_{\rho,\Omega_A^+}(x,2^{-j}) \lesssim  2^{-k_0\alpha} + \epsilon.
$$
Plugging this estimate into \cref{eqbk***} gives
$$
  \sum_{k=k_0}^{k_1} \mathfrak b_k \lesssim  2^{-k_0\alpha} + \epsilon^{1/2}.$$
Choosing $\epsilon$ small enough and $k_0$ large enough, we derive
$$\sum_{k=k_0}^{k_1} \mathfrak b_k \ll \lambda_0,$$
as desired, which concludes the proof of \cref{mainlemma}. In combination with \cref{teolipschitz}, we then get \cref{teomain}.

\vv
\subsection{Estimating \texorpdfstring{$a_{\rho,\Omega_A^+}(x,2^{-j})$}{a\_\{ρ,ΩA+\}(x, 2⁻ʲ)}}

\begin{lemma}\label{lemafc991}
  For all $x\in 5B_{Q_0}\cap\Gamma$ the following holds:
  \begin{itemize}
    \item[(a)] $a_{\rho,\Omega_A^+}(x,r)\lesssim \epsilon$ for all $r>0$.

    \item[(b)] $a_{\rho,\Omega_A^+}(x,r)\lesssim \epsilon\,\dfrac{r}{\ell_1}$ for all $0<r\leq \ell_1$.
  \end{itemize}
\end{lemma}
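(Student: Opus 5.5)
The plan is to bound $a_{\rho,\Omega_A^+}(x,r)$ by the volume of the symmetric difference $\Omega_A^+\,\Delta\, H_x^+$, weighted by $|\rho|$, for a well-chosen half-space $H_x^+$. We then control that volume, annulus by annulus, through the bilateral $\beta$-numbers of $\Gamma$ from \cref{coroGamma}. This is the same scheme as in the proof of \cref{thm:difference-of-a-coeff}.

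Fix $x=\vphi_A(x_0)\in 5B_{Q_0}\cap\Gamma$ and $r>0$. Since $a_\rho$ does not depend on the hyperplane through $x$, I choose $H_x$ to be the graph of an affine function $\ell_0$ with $\ell_0(x_0)=A(x_0)$. Then
\[
a_{\rho,\Omega_A^+}(x,r)\le \frac1{r^{n+1}}\int_{\Omega_A^+\Delta H_x^+}\Big|\psi\Big(\frac{|y-x|}{r}\Big)\Big|\,dy .
\]
Split $\R^{n+1}$ into $B(x,2r)$ and the annuli $A(x,2^kr,2^{k+1}r)$, $k\ge1$. On the $k$-th piece $|\psi|\lesssim_M 2^{-Mk}$. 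Since $\Gamma$ is a graph of slope $\lesssim\lambda_0<1$, the set $(\Omega_A^+\Delta H_x^+)\cap B(x,2^{k+1}r)$ lies in the region between the graphs of $A$ and $\ell_0$ over $B(x_0,2^{k+1}r)$. Its measure is therefore at most $(2^{k+1}r)^n\|A-\ell_0\|_{L^\infty(B(x_0,2^{k+1}r))}$.

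To estimate $\|A-\ell_0\|_\infty$, I let $\ell_j$ be the best affine approximation of $A$ on $B(x_0,2^jr)$, and take $\ell_0:=\ell_1-\ell_1(x_0)+A(x_0)$. Telescoping exactly as in the proof of \cref{thm:difference-of-a-coeff} gives
\[
\|A-\ell_0\|_{L^\infty(B(x_0,2^{k+1}r))}\lesssim 2^{k}r\sum_{j=1}^{k+1}\mathfrak b_A(x_0,2^jr).
\]
Because $\Gamma$ has small slope, $\mathfrak b_A(x_0,s)\lesssim b\beta_\Gamma(x,Cs)$: vertical and Euclidean distances to a plane of slope $\lesssim\lambda_0$ are comparable. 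The slope of $\ell_1$ is also $\lesssim\lambda_0$, so $H_x$ is an admissible hyperplane. Hence
\[
a_{\rho,\Omega_A^+}(x,r)\lesssim_M \sum_{k\ge0}2^{-(M-n-1)k}\sum_{j=1}^{k+1} b\beta_\Gamma(x,C2^jr).
\]

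For (a), I insert \eqref{eqbbetaG1}, which holds for every $x\in 6B_{Q_0}\cap\Gamma$ and all scales. This gives $\sum_k 2^{-(M-n-1)k}(k+1)\epsilon\lesssim\epsilon$.

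For (b), with $r\le\ell_1$, I split according to the size of $C2^jr$. When $C2^jr\le\ell_1$, \eqref{eqbbetaG2} gives $b\beta_\Gamma\lesssim\epsilon\,2^jr/\ell_1$. When $C2^jr>\ell_1$, I use the bound $\epsilon$ together with $1<C2^jr/\ell_1\le C2^{k+1}r/\ell_1$. In both cases
\[
\sum_{j\le k+1}b\beta_\Gamma(x,C2^jr)\lesssim (k+1)\,\epsilon\,2^{k}\frac r{\ell_1},
\]
and taking $M$ large makes the sum over $k$ converge, giving $\lesssim\epsilon\, r/\ell_1$. Alternatively, for (b) one can take $H_x$ to be the tangent plane of $\Gamma$ at $x$ and use $|\nabla^2A|\lesssim\epsilon/\ell_1$ from \cref{l:A} directly at the small scales.

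The main obstacle is the comparison $\mathfrak b_A(x_0,s)\lesssim b\beta_\Gamma(x,Cs)$ at large scales $s\gtrsim R_0$, together with the fact that $\Gamma$ is only known to be $\lambda_0$-Lipschitz near $10B_{Q_0}$. Outside $KB_{Q_0}$ we have $A\equiv0$, and \cref{l:GdistL0} controls $\dist(\cdot,L_0)$. So I expect the bound $\mathfrak b_A(x_0,s)\lesssim\epsilon$ for all $s$ (already used in \cref{lembkfac}(a)) to be the cleanest input. If needed, I would quote it in place of the $b\beta_\Gamma$ bound for the large-scale terms.
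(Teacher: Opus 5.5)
Your argument is correct, but it takes a different route from the paper.

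\textbf{The paper's route.} The paper exploits that $\rho$ is radial. Writing $a_{\rho,\Omega_A^+}(x,r)$ in polar coordinates about $x$, it bounds it by
\[
r^{-n-1}\int_0^\infty|\psi(t/r)|\,\bigl|\HH^n(\Omega_A^+\cap S(x,t))-\tfrac12\HH^n(S(x,t))\bigr|\,dt .
\]
It then uses the single-scale sphere estimate
\[
\bigl|\HH^n(\Omega_A^+\cap S(x,t))-\tfrac12\HH^n(S(x,t))\bigr|\lesssim b\beta_\Gamma(x,t)\,t^n .
\]
Part (a) follows at once from \cref{eqbbetaG1}. Part (b) follows by splitting the $t$-integral at $\ell_1$ and using \cref{eqbbetaG2} below $\ell_1$. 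Each sphere is compared with its own best hyperplane, and $\tfrac12\HH^n(S(x,t))$ is hyperplane-independent. So there is no telescoping across scales and no vertical-versus-bilateral $\beta$ comparison. The large scales are covered automatically, because \cref{eqbbetaG1} holds for every $r>0$.

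\textbf{Your route.} You fix one half-space for all scales, as in \cref{thm:difference-of-a-coeff}. This forces you to telescope the affine approximations and absorb the resulting $2^k$ and $(k+1)$ losses into the Schwartz decay. It also forces you to pass from $\mathfrak b_A$ to $b\beta_\Gamma$. That conversion needs the separate large-scale input you correctly identify:
\[
\mathfrak b_A(x_0,s)\le \|A\|_\infty/s\lesssim \epsilon\,\ell(Q_0)/s\lesssim\epsilon \quad\text{for } s\ge\ell(Q_0),
\]
by \cref{l:GdistL0}. For $s\le\ell(Q_0)$, the ball $B(x_0,s)$ stays where $A$ is $\lambda_0$-Lipschitz, so $\mathfrak b_A(x_0,s)\lesssim b\beta_\Gamma(x,Cs)$ is legitimate there.

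\textbf{Trade-offs.} Your approach makes the geometric step (the volume between two graphs over a ball) completely elementary. The paper's ``easy to check'' sphere estimate hides a small argument: the two caps of $S(x,t)$ away from the approximating plane must lie on the correct sides of $\Gamma$. The paper's approach is shorter. It also has the sphere-by-sphere form of \cref{thm:a-eta-decay} and \cref{lemmaguai*}, where exactly this form is needed to bring in the hypothesis on $\ve_n$.

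A minor point: since $\rho$ is radial, any hyperplane through $x$ is allowed in the definition of $a_{\rho,\Omega_A^+}$. So your check that $H_x$ has small slope is unnecessary.
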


\begin{proof}
  For any $x\in 5B_{Q_0}\cap\Gamma$ and $t>0$, it is easy to check that
  $$\bigl| \mathcal{H}^{n}(\Omega_A^+\cap S(x,t)) - \frac{1}{2}\mathcal{H}^{n}(S(x,t)) \bigr|\lesssim b\beta_\Gamma(x,t)\, t^n.$$
  Then, by the definition of $a_{\rho,\Omega_A^+}$, we get
  \begin{align}
    a_{\rho,\Omega_A^+}(x,r)\label{eqavghj34}
     & = \frac{1}{r^{n+1}} \left|  \int_0^{\infty} \psi\Bigl(\frac{t}{r}\Bigr) \mathcal{H}^{n}(\Omega_A^+\cap S(x,t)) \,\mathrm{d} t -  \int_0^{\infty} \psi\Bigl(\frac{t}{r}\Bigr) \mathcal{H}^{n}(H_x^+\cap S(x,t)) \,\mathrm{d} t  \right| \notag \\
     & \le \frac{1}{r^{n+1}} \int_0^{\infty} \Bigl|  \psi\Bigl(\frac{t}{r}\Bigr) \Bigr| \bigl| \mathcal{H}^{n}(\Omega_A^+\cap S(x,t)) - \frac{1}{2}\mathcal{H}^{n}(S(x,t)) \bigr| \,\mathrm{d} t    \notag                                           \\
     & \lesssim \frac{1}{r^{n+1}} \int_0^{\infty} \Bigl|  \psi\Bigl(\frac{t}{r}\Bigr)\Bigr| \, b\beta_\Gamma(x,t)\, t^n \,\mathrm{d} t.
  \end{align}
  From \cref{eqbbetaG1}, it follows that, for $x\in 5B_{Q_0}\cap\Gamma$ and all $r>0$,
  $$ a_{\rho,\Omega_A^+}(x,r)
    \lesssim \epsilon\frac{1}{r^{n+1}} \int_0^{\infty} \Bigl|  \psi\Bigl(\frac{t}{r}\Bigr)\Bigr|\, t^n \,\mathrm{d} t \lesssim \epsilon.
  $$

  For the estimate in (b), we use \cref{eqbbetaG1,eqbbetaG2}:
  \begin{align*}
    a_{\rho,\Omega_A^+}(x,r) & \lesssim
    \frac{1}{r^{n+1}} \int_0^{\ell_1} \Bigl|  \psi\Bigl(\frac{t}{r}\Bigr)\Bigr| \, b\beta_\Gamma(x,t)\, t^n \,\mathrm{d} t +
    \frac{1}{r^{n+1}}\int_{\ell_1}^\infty \Bigl|  \psi\Bigl(\frac{t}{r}\Bigr)\Bigr| \, b\beta_\Gamma(x,t)\, t^n \,\mathrm{d} t \\
                             & \lesssim
    \frac{\epsilon}{r^{n+1}} \int_0^{\ell_1} \Bigl|  \psi\Bigl(\frac{t}{r}\Bigr)\Bigr| \, \frac{t^{n+1}}{\ell_1} \,\mathrm{d} t +
    \frac{\epsilon}{r^{n+1}} \int_{\ell_1}^\infty \Bigl|  \psi\Bigl(\frac{t}{r}\Bigr)\Bigr| \, t^n \,\mathrm{d} t.
  \end{align*}
  For the first integral we write
  $$
    \frac{\epsilon}{r^{n+1}} \int_0^{\ell_1} \Bigl|  \psi\Bigl(\frac{t}{r}\Bigr)\Bigr| \, \frac{t^{n+1}}{\ell_1} \,\mathrm{d} t
    \leq \frac{\epsilon}{r^{n+1}\ell_1} \int_0^{\infty} \Bigl|  \psi\Bigl(\frac{t}{r}\Bigr)\Bigr| \, t^{n+1} \,\mathrm{d} t
    \approx \epsilon \frac{r}{\ell_1}.
  $$
  For the second integral,
  $$\frac{\epsilon}{r^{n+1}}  \int_{\ell_1}^\infty \Big|\psi\Big(\frac tr\Big)\Big| \,t^{n}\,\dd t
    \lesssim \frac{\epsilon}{r^{n+1}}  \int_{\ell_1}^\infty \Big(\frac rt\Big)^{n+2} \,t^{n}\,\dd t
    \approx \epsilon \frac{r}{\ell_1}.
  $$
  Adding the last two estimates, (b) follows.
\end{proof}
\vv

\begin{lemma}\label{lemdifarho}
  Given $x\in 5B_{Q_0}\cap\Gamma$, let $x'\in\pom$ be such that $|x-x'|\lesssim\epsilon\ell_1$. For all $r\in[\ell_1,\ell(Q_0)]$,
  we have
  \begin{align}\label{eqx2}
    \big| a_{\rho,\Omega_A^+}(x,r) - a_{\rho,\Omega_A^+}(x',r)\big|\lesssim \frac{\epsilon \ell_1}r.
  \end{align}
\end{lemma}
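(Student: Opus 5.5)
The plan is to exploit the fact that both quantities in \cref{eqx2} integrate the same fixed set $\Omega_A^+$ against translates of the same radial kernel. The half-space terms, however, are centered at different points, so they need separate treatment. Write $\rho_r(z)=\rho(z/r)$. Take the reference half-space for $x'$ to be the translate $H_{x'}^+=H_x^+ + (x'-x)$, which is allowed since $a_\rho$ does not depend on the choice of hyperplane through the center. Then
$$\int_{H_{x'}^+}\rho_r(y-x')\,dy=\int_{H_x^+}\rho_r(y-x)\,dy,$$
so the half-space terms cancel exactly. What remains is
\begin{equation*}
\big| a_{\rho,\Omega_A^+}(x,r) - a_{\rho,\Omega_A^+}(x',r)\big|\le \frac{1}{r^{n+1}}\Big|\int_{\Omega_A^+}\big(\rho_r(y-x)-\rho_r(y-x')\big)\,dy\Big| .
\end{equation*}

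A direct mean value estimate is too weak. It gives $|\nabla\rho_r|\lesssim r^{-1}$, with $\rho$ Schwartz, so one obtains only $r^{-n-1}\cdot r^{n+1}\cdot |x-x'|/r\lesssim \epsilon\ell_1/r$. Actually this already suffices: $\int|\rho_r(y-x)-\rho_r(y-x')|\,dy\le |x-x'|\,\|\nabla \rho_r\|_{L^1}\lesssim |x-x'|\,r^{n}$. Dividing by $r^{n+1}$ gives $\lesssim |x-x'|/r\lesssim \epsilon\ell_1/r$. So the key steps, in order, are the following.

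\textbf{Step 1.} Justify the cancellation of the half-space terms, either by using the translated hyperplane or simply by noting that, since $\rho$ is even, each half-space integral equals $\frac12\int\rho_r$, independently of the center.

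\textbf{Step 2.} Write the difference of the $\Omega_A^+$ integrals as $\int_{\Omega_A^+}\int_0^1 \nabla\rho_r(y-x-s(x'-x))\cdot(x'-x)\,ds\,dy$ and bound its absolute value by $|x-x'|\,\|\nabla\rho_r\|_{L^1(\R^{n+1})}$.

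\textbf{Step 3.} Use scaling, $\|\nabla\rho_r\|_{L^1}=r^{n}\|\nabla\rho\|_{L^1}$, where $\nabla\rho$ is integrable because $\psi$ is Schwartz and even. Evenness makes $\rho(x)=\psi(|x|)$ smooth at the origin, since $\psi$ is the radial profile of the smooth radial function $\xi$.

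The only point I expect to require care is this smoothness and integrability of $\rho$ and its gradient, together with the fact that $\rho$ is in general not absolutely integrable over the half-space with a sign. Both are harmless because $\psi$ is a Schwartz function.

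I note that the hypothesis $r\in[\ell_1,\ell(Q_0)]$ and the fact that $x'\in\pom$ are not really used in this argument. They presumably matter later, when $a_{\rho,\Omega_A^+}(x',r)$ is compared with the corresponding coefficient for $\Omega$ at the boundary point $x'$. If the intended statement instead compares $a_{\rho,\Omega_A^+}(x,r)$ with $a_{\rho,\Omega}(x',r)$, the main obstacle would become bounding $r^{-n-1}\int_{\Omega_A^+\Delta\Omega}|\rho_r(y-x')|\,dy$. For that I would use \cref{coroguai} to confine the symmetric difference to a $C\epsilon\ell_1$-neighborhood of $\Gamma$ inside $6B_{Q_0}$, and use the decay of $\rho$ outside. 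This gives a volume of order $\epsilon\ell_1 t^n$ in $B(x',t)$, hence a bound $\epsilon\ell_1/r$ plus a tail term controlled by $\ell(Q_0)\geq r$.
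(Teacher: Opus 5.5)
Your argument is correct. It shares the paper's underlying idea, a Lipschitz bound for the kernel in the center variable, but carries it out differently.

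\textbf{The paper's route.} The paper silently drops the half-space terms; your Step 1 supplies the justification, since by radiality each equals $\frac12 r^{n+1}\int\rho$ regardless of the center. It then splits $\R^{n+1}$ into dyadic annuli around $x$. Using $|x-x'|\lesssim\epsilon\ell_1\ll r$, it keeps $|x'-z|/r$ inside a slightly enlarged annulus and applies the mean value theorem to $\psi$ on each piece. It sums using the rapid decay of $\psi'$, which must also absorb the annulus volumes $\approx 2^{j(n+1)}r^{n+1}$; that factor is left implicit in the paper's display.

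\textbf{Your route.} You integrate $\nabla\rho_r$ along the segment $[x,x']$ and use Fubini and translation invariance. This gives exactly $|x-x'|\,r^{n}\|\nabla\rho\|_{L^1}$, with $\|\nabla\rho\|_{L^1}=\mathcal{H}^n(\mathbb{S}^n)\int_0^\infty|\psi'(t)|\,t^n\,dt<\infty$.

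\textbf{What each buys.} Your version is shorter and needs no localization. It gives the bound $|x-x'|/r$ for every $r>0$, every pair of centers, and every measurable set in place of $\Omega_A^+$. So you are right that $x'\in\partial\Omega$ and $r\le\ell(Q_0)$ play no role here. In the paper, $r\ge\ell_1$ serves only to guarantee $|x-x'|\ll r$. The paper's version gains nothing beyond mirroring the annular bookkeeping of \cref{thm:difference-of-a-coeff}.

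\textbf{Cosmetic points.}
\begin{itemize}
\item Delete the ``too weak'' sentence, which you immediately retract.
\item Your remark that $\rho$ is not absolutely integrable over a half-space is inaccurate. Since $\rho$ is Schwartz it is integrable; it merely changes sign.
\item All the argument needs is that $\rho$ is $C^1$ (because $\psi'(0)=0$), with $|\nabla\rho(y)|=|\psi'(|y|)|$.
\end{itemize}
Your final paragraph addresses a different comparison, between $\Omega_A^+$ and $\Omega$. The paper carries that out inside \cref{lemmaguai*} using \cref{coroguai}.
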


\begin{proof}
  Write $A_{x,j}=A(x,2^j r,2^{j+1}r)$ for $j >0$ and $A_{x,0} = B(x,2r)$. Write also $A_{j}=A(0,2^{j-1},2^{j+2})$ for $j >0$ and $A_{0} = B(0,4)$.
  Using that $|x-x'|\leq \epsilon \ell_1\ll r$, it follows that
  $$\frac{|x-z|}r \in A_{j}  \quad \mbox{ if $z\in A_{x,j}$.}$$
  Then we get
  \begin{align*}
    \big|a_{\rho,\Omega_A^+} (x,r) - a_{\rho,\Omega_A^+} (x',r)\big| & \leq \frac1{r^{n+1}}\int_{\Omega_A^+}
    \bigg| \psi\biggl(\frac{|x-y|}{r}\biggr) - \psi\bigg(\frac{|x'-y|}{r}\bigg)\bigg|\,\dd y                                                      \\
                                                                     & \leq \sum_{j\geq0} \frac1{r^{n+1}}\int_{A_{x,j}} \sup_{z\in A_{x,j}}
    \bigg| \psi\biggl(\frac{|x-z|}{r}\biggr) - \psi\bigg(\frac{|x'-z|}{r}\bigg)\bigg|\,\dd y                                                      \\
                                                                     & \lesssim \sum_{j\geq0} \sup_{\xi\in A_j}|\nabla \psi(\xi)|\,\frac{|x-x'|}r
    \lesssim \frac{\epsilon\ell_1}r.
  \end{align*}
\end{proof}
\vv

\begin{lemma} \label{lemmaguai*}
  For all $x\in 5B_{Q_0}\cap\Gamma$ and all $r\in[\ell_1,\ell(Q_0)]$,  we have
  $$a_{\rho,\Omega_A^+}(x,r) \lesssim r^\alpha + \epsilon\,\frac r{\ell(Q_0)} +  \epsilon\,\frac{\ell_1}r.$$
\end{lemma}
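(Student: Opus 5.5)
We reduce the estimate for $a_{\rho,\Omega_A^+}(x,r)$ to the hypothesis $\ve_n(x',t)\le C t^\alpha$ at a nearby boundary point $x'\in\pom$, controlling the transfer error with Ahlfors regularity and \cref{coroguai}. Fix $x\in 5B_{Q_0}\cap\Gamma$ and $r\in[\ell_1,\ell(Q_0)]$. By \cref{coroguai}, choose $x'\in\pom$ with $|x-x'|\lesssim\epsilon\ell_1$. \Cref{lemdifarho} gives
$$a_{\rho,\Omega_A^+}(x,r)\le a_{\rho,\Omega_A^+}(x',r)+C\epsilon\ell_1/r.$$
(For $a_{\rho,\Omega_A^+}(x',r)$ the reference half-space term is independent of the half-space through $x'$, since $\rho$ is even.) It then remains to bound $a_{\rho,\Omega_A^+}(x',r)$ by $r^\alpha+\epsilon r/\ell(Q_0)+\epsilon\ell_1/r$.

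\textbf{Step 1: replace $\Omega_A^+$ by $\Omega$.} We write
$$a_{\rho,\Omega_A^+}(x',r)\le a_{\rho,\Omega}(x',r)+\frac1{r^{n+1}}\int_{\Omega\,\Delta\,\Omega_A^+}\Bigl|\psi\Bigl(\frac{|y-x'|}{r}\Bigr)\Bigr|\,dy,$$
orienting $\Omega_A^+$ so that it lies on the same side as $\Omega$. This is possible by the two-sided corkscrew structure and the argument of the $b\beta$ lemma in \cref{secgeneral}. We split the symmetric difference integral into the region inside $4B_{Q_0}$ and the region outside.

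Inside $4B_{Q_0}$, \cref{coroguai} places $\pom$ and $\Gamma$ within $C\epsilon\ell_1$ of each other. Together with the corkscrew and connectivity argument, this shows that $\Omega\Delta\Omega_A^+$ is contained in the $C\epsilon\ell_1$-neighbourhood of $\Gamma$. For each annulus $A(x',2^kr,2^{k+1}r)$, that neighbourhood has volume $\lesssim\epsilon\ell_1(2^kr)^n$, because $\Gamma$ is a Lipschitz graph. Summing against the Schwartz decay of $\psi$ gives a contribution $\lesssim\epsilon\ell_1/r$.

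Outside $4B_{Q_0}$, we have $|y-x'|\gtrsim\ell(Q_0)$. By \cref{l:GdistL0} and the flatness $b\beta_\pom\le\epsilon$ at scales up to $\epsilon^{-1}R_0$, both sets differ from a common half-space by a slab of width $\lesssim\epsilon|y-x'|$. The contribution is therefore
$$\lesssim \frac{\epsilon}{r^{n+1}}\int_{|y-x'|\gtrsim \ell(Q_0)}\Bigl(\frac r{|y-x'|}\Bigr)^{M}|y-x'|^{n}\,d|y-x'|\lesssim\epsilon\Bigl(\frac r{\ell(Q_0)}\Bigr)^{M-n-1},$$
which is $\le\epsilon r/\ell(Q_0)$ once $M$ is large. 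This is where the term $\epsilon r/\ell(Q_0)$ comes from. It also covers large scales where the global geometry of $\Omega$ is not flat, since the $\psi$ tail kills them.

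\textbf{Step 2: bound $a_{\rho,\Omega}(x',r)$.} As in \cref{thm:a-eta-decay},
$$a_{\rho,\Omega}(x',r)\le r^{-n-1}\int_0^\infty|\psi(t/r)|\,\Bigl|\HH^n(\Omega\cap S(x',t))-\tfrac12\HH^n(S(x',t))\Bigr|\,dt.$$
For $t\le\diam(\pom)$, the integrand is at most $\ve_n(x',t)\,t^n\lesssim t^{n+\alpha}$. The reason is that the symmetric difference between $\Omega\cap S(x',t)$ and $H^+\cap S(x',t)$ is contained in the set appearing in the definition of $\ve_n$, up to $\HH^n(\pom\cap S(x',t))$, which vanishes for a.e. $t$ by Ahlfors regularity and the coarea formula. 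For $t>\diam(\pom)$, the integrand is trivially $\lesssim t^n$, and $\psi$'s decay gives $(r/\diam\pom)^M\lesssim r/\ell(Q_0)\cdot\epsilon$ for $R_0$ small. Integrating yields $\lesssim r^\alpha$.

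\textbf{Main obstacle.} The hardest step will be the inside region of Step 1. One must justify that $\Omega$ and $\Omega_A^+$ agree off a thin neighbourhood of $\Gamma$ inside $4B_{Q_0}$, not merely that their boundaries are close, and this needs the corkscrew/connectivity argument at every scale down to $\ell_1$. One must also check near $\partial(4B_{Q_0})$, where $A$ is only an approximation (it is cut off by $\chi$ only later), that the error remains of size $\epsilon$ times the distance and not merely $\lambda_0$.
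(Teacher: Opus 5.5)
Your proposal is correct in substance and shares the paper's skeleton. You pass from $x$ to a nearby $x'\in\pom$ with \cref{lemdifarho}, compare $\Omega_A^+$ with $\Omega$ through \cref{coroguai}, and apply $\ve_n(x',t)\lesssim t^\alpha$. The decomposition, however, is different.

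The paper works sphere by sphere and splits the radii into $t<\ell_1$, $\ell_1\le t\le\ell(Q_0)$ and $t>\ell(Q_0)$. It brings in $\Omega$ only in the middle range, where \cref{coroguai} gives $\HH^n((\Omega\Delta\Omega_A^+)\cap S(x',t))\lesssim\epsilon\ell_1t^{n-1}$. In the two outer ranges it uses only the flatness of $\Gamma$ (\cref{coroGamma}), and this is what produces the terms $\epsilon\ell_1^{n+1}/r^{n+1}$ and $\epsilon r/\ell(Q_0)$.

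Your global volumetric replacement $a_{\rho,\Omega_A^+}(x',r)\le a_{\rho,\Omega}(x',r)+r^{-n-1}\int_{\Omega\Delta\Omega_A^+}|\psi(|y-x'|/r)|\,dy$ lets you apply the hypothesis uniformly at all scales. The price is that you must control $\Omega\Delta\Omega_A^+$ far from $Q_0$. That requires flatness of $\pom$ up to scale $\epsilon^{-1}R_0$, propagation of the choice of sides through all those scales, and the tail beyond $\diam(\pom)$. All of this can be done, but the paper shows it is unnecessary: for $t\ge\ell(Q_0)$, the fact that $\Gamma$ stays within $C\epsilon\ell(Q_0)$ of $L_0$ (\cref{l:GdistL0}) already supplies the factor $\epsilon$. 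No information on $\Omega$ outside $6B_{Q_0}$ is needed.

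Two details need repair.

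\emph{The split ball.} Do not split at $4B_{Q_0}$. Since $x$ is only assumed to lie in $5B_{Q_0}$, the point $x'$ may lie outside $4B_{Q_0}$. Then the claim that $y\notin 4B_{Q_0}$ implies $|y-x'|\gtrsim\ell(Q_0)$ fails. Split instead at $B(x',\frac12\ell(Q_0))\subset 6B_{Q_0}$, where \cref{coroguai} is available. This also removes your worry about $\partial(4B_{Q_0})$, because $\Omega_A^+$ is defined by $A$ itself, not by $\chi A$.

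The connectivity argument is needed only once, not at every scale down to $\ell_1$. The set $6B_{Q_0}\setminus U_{C\epsilon\ell_1}(\Gamma)$ consists of an upper and a lower connected piece, both disjoint from $\pom$ by \cref{coroguai}. The two corkscrew balls, of radius $\gtrsim\ell(Q_0)$, meet these pieces and decide which one lies in $\Omega$. The paper uses this tacitly in its bound for $I_2$.

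\emph{The far-region slab.} In the far region the best approximating planes of $\pom$ can rotate by up to $C\epsilon$ per dyadic scale. So, relative to a fixed half-space, the slab at distance $s$ has width $\lesssim\epsilon s(1+\log(s/\ell(Q_0)))$ rather than $\epsilon s$. The logarithm is harmless against the decay of $\psi$, and you still get $\lesssim\epsilon (r/\ell(Q_0))^{M-n-1}\le\epsilon r/\ell(Q_0)$.
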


\begin{proof}
  For $x$ and $r$ as in the lemma, let $x'\in\pom$ be such that  $|x-x'|\leq \epsilon \ell_1\ll r$, and consequently $x'\in 6B_{Q_0}$. By \cref{lemdifarho},
  \begin{align}\label{eqfhjc93}
    a_{\rho,\Omega_A^+}(x,r) \leq C\epsilon\,\frac{\ell_1}r + a_{\rho,\Omega_A^+}(x',r).
  \end{align}
  As in \cref{eqavghj34}, we have
  \begin{align*}
    a_{\rho,\Omega_A^+}(x',r) &
    \le \frac{1}{r^{n+1}} \int_0^{\infty} \Bigl|  \psi\Bigl(\frac{t}{r}\Bigr) \Bigr| \bigl| \mathcal{H}^{n}(\Omega_A^+\cap S(x',t)) - \frac{1}{2}\mathcal{H}^{n}(S(x',t)) \bigr| \,\mathrm{d} t \\
                              & =
    \frac{1}{r^{n+1}} \left(\int_0^{\ell_1}\cdots\; +  \int_{\ell_1}^{\ell(Q_0)}\cdots\; + \int_{\ell(Q_0)}^\infty\cdots\right)
    =: I_1 + I_2 +I_3.
  \end{align*}

  For the integral $I_1$ we use \cref{eqbbetaG1} and that $\|\psi\|_\infty\lesssim1$, and then we deduce
  $$I_1\lesssim \frac{1}{r^{n+1}} \int_0^{\ell_1} \Bigl|  \psi\Bigl(\frac{t}{r}\Bigr) \Bigr| t^{n-1}\epsilon\ell_1\,\mathrm{d} t
    \lesssim
    \frac{\epsilon}{r^{n+1}} \int_0^{\ell_1}  t^{n}\,\mathrm{d} t\lesssim
    \frac{\epsilon\,\ell_1^{n+1}}{r^{n+1}}.$$
  This follows from the fact that, for $r\leq\ell_1$, $\Gamma\cap S(x',r)$ is contained in a $C\ell_1$-neighborhood of the equator of $S(x',r)$.

  For $I_3$ we use again \cref{eqbbetaG1} and the fact that $|\psi(s)|\lesssim s^{-n-2}$:
  $$I_3\lesssim \frac{1}{r^{n+1}} \int_{\ell(Q_0)}^\infty \Bigl|  \psi\Bigl(\frac{t}{r}\Bigr) \Bigr| t^n\,b\beta(x',t)\,\mathrm{d} t
    \lesssim
    \frac{\epsilon}{r^{n+1}} \int_{\ell(Q_0)}^\infty\Big(\frac{r}{t}\Big)^{n+2}  t^{n}\,\mathrm{d} t\lesssim
    \frac{\epsilon\,r}{\ell(Q_0)}.$$
  To estimate $I_2$ we use \cref{coroguai} to deduce that, for $\ell_1\le t \le\ell(Q_0)$,
  \begin{align*}
    \bigl| \mathcal{H}^{n}(\Omega_A^+\cap S(x',t)) - \frac{1}{2}\mathcal{H}^{n}(S(x',t)) \bigr| &
    \leq \bigl| \mathcal{H}^{n}(\Omega^+\cap S(x',t)) - \frac{1}{2}\mathcal{H}^{n}(S(x',t)) \bigr|                                                   \\
                                                                                                & \quad +
    \mathcal{H}^{n}((\Omega^+ \Delta \Omega_A^+)\cap S(x',t))                                                                                        \\
                                                                                                & \lesssim \ve_n(x',t)\,t^n + \epsilon\ell_1 t^{n-1} \\& \lesssim t^{n+\alpha} + \epsilon\ell_1 t^{n-1}.
  \end{align*}
  Therefore,
  $$
    I_2 \lesssim \frac{1}{r^{n+1}}\int_0^\infty
    \Bigl|  \psi\Bigl(\frac{t}{r}\Bigr) \Bigr| \big(t^{n+\alpha} + \epsilon\ell_1 t^{n-1}\big)\,\mathrm{d} t
    \lesssim r^\alpha + \frac{\epsilon\ell_1}r.
  $$

  Gathering the estimates for $I_1$, $I_2$, and $I_3$ and using \cref{eqfhjc93}, the lemma follows.
\end{proof}

\vv

\begin{lemma}\label{lem7.11}
  We have
  $$\sum_{j\geq k_0} \sup_{x\in 5B_{Q_0}\cap\Gamma} a_{\rho,\Omega_A^+}(x,2^{-j}) \lesssim 2^{-k_0\alpha} + \epsilon.
  $$
\end{lemma}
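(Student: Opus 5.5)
The plan is to split the sum over $j\geq k_0$ at the scale $\ell_1=2^{-k_1}$. Each piece is then handled with the bounds already established: \cref{lemafc991}(b) for the small scales and \cref{lemmaguai*} for the intermediate scales.

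For $j>k_1$, i.e.\ $r=2^{-j}<\ell_1$, \cref{lemafc991}(b) gives, uniformly in $x\in 5B_{Q_0}\cap\Gamma$,
$$\sup_{x} a_{\rho,\Omega_A^+}(x,2^{-j})\lesssim \epsilon\,\frac{2^{-j}}{\ell_1}=\epsilon\,2^{k_1-j}.$$
Summing this geometric series over $j>k_1$ gives $\lesssim\epsilon$. The endpoint $j=k_1$ can be treated by either lemma, since at $r=\ell_1$ both give $O(\epsilon)$ plus harmless terms.

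For $k_0\leq j\leq k_1$, i.e.\ $r=2^{-j}\in[\ell_1,\ell(Q_0)]$, \cref{lemmaguai*} applies uniformly in $x\in 5B_{Q_0}\cap\Gamma$ and gives
$$\sup_x a_{\rho,\Omega_A^+}(x,2^{-j})\lesssim 2^{-j\alpha}+\epsilon\,2^{k_0-j}+\epsilon\,2^{j-k_1}.$$
Each of the three terms sums to the right size:
\begin{itemize}
\item $\sum_{j\geq k_0}2^{-j\alpha}\lesssim_\alpha 2^{-k_0\alpha}$, a geometric series since $\alpha>0$;
\item $\sum_{j\geq k_0}\epsilon 2^{k_0-j}\lesssim\epsilon$;
\item $\sum_{j\leq k_1}\epsilon 2^{j-k_1}\lesssim\epsilon$, a geometric series decaying downward from $k_1$.
\end{itemize}
Adding the two ranges gives the claimed bound $2^{-k_0\alpha}+\epsilon$.

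No step is a genuine obstacle, because the substantive work is already in \cref{lemmaguai*} and \cref{lemafc991}; the key point is that the error terms there decay geometrically away from the scales $\ell(Q_0)$ and $\ell_1$ respectively. The two points to check are these:
\begin{itemize}
\item The implicit constant in \cref{lemmaguai*} must not depend on $j$. It does not: it depends only on the constant in the hypothesis $\ve_n\leq Cr^\alpha$, on $n$, and on the Ahlfors regularity constant, which is what makes the sum of $2^{-j\alpha}$ legitimate.
\item The radius $2^{-j}$ must stay at most $\operatorname{diam}(\pom)$, as the hypothesis of \cref{mainlemma} requires. This holds because $\ell(Q_0)=R_0$ is small.
\end{itemize}
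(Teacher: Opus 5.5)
Your proposal is correct and follows the paper's proof exactly. You split the sum at $k_1$, apply \cref{lemmaguai*} for $k_0\leq j\leq k_1$ to get the three terms $2^{-j\alpha}+\epsilon\,2^{k_0-j}+\epsilon\,2^{j-k_1}$, use \cref{lemafc991}(b) for $j>k_1$, and sum the resulting geometric series.
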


\begin{proof}
  We denote
  $$I_1 = \{j:k_0\leq j\leq k_1\},\qquad I_2 = \{j:j> k_1\}.$$

  For the sum over $I_1$ we use \cref{lemmaguai*}.
  Recalling that $\ell_1=2^{-k_1}$, we get
  \begin{align*}
    \sum_{j\in I_1} \sup_{x\in 5B_{Q_0}\cap\Gamma} a_{\rho,\Omega_A^+}(x,2^{-j})
     & \lesssim \sum_{j\in I_1}
    \Big(2^{-j\alpha} + \epsilon\,2^{k_0-j} + \epsilon\,2^{j-k_1} \Big) \\
     & \leq \sum_{j\geq k_0} 2^{-j\alpha}
    +\epsilon \sum_{j\geq k_0} 2^{k_0-j} + \epsilon\sum_{j\leq k_1} 2^{j-k_1} \lesssim 2^{-k_0\alpha} + \epsilon.
  \end{align*}
  To estimate the sum over $I_2$ we use \cref{lemafc991}(b):
  $$\sum_{j\in I_2} \sup_{x\in 5B_{Q_0}\cap\Gamma} a_{\rho,\Omega_A^+}(x,2^{-j})
    \lesssim \sum_{j: 2^{-j}< \ell_1} \epsilon\,\dfrac{2^{-j}}{\ell_1} \lesssim \epsilon.$$
  Adding the two estimates above, the lemma follows.
\end{proof}

\printbibliography

\end{document}